\newtheorem{thm}{Theorem}
\newtheorem{cor}[thm]{Corollary}
\newtheorem{lem}[thm]{Lemma}
\newtheorem{prop}[thm]{Proposition}
\newtheorem{conj}[thm]{Conjecture}
\theoremstyle{definition}
\newtheorem{defn}[thm]{Definition}
\newtheorem{rem}[thm]{Remark}
\newtheorem{obs}[thm]{Observation}
\numberwithin{thm}{section}
\newcommand{\trel}{\ensuremath{T_{\mathrm{rel}}}}
\newcommand{\cA}{\ensuremath{\mathcal A}}
\newcommand{\cB}{\ensuremath{\mathcal B}}
\newcommand{\cC}{\ensuremath{\mathcal C}}
\newcommand{\cD}{\ensuremath{\mathcal D}}
\newcommand{\cK}{\ensuremath{\mathcal K}}
\newcommand{\cL}{\ensuremath{\mathcal L}}
\newcommand{\cS}{\ensuremath{\mathcal S}}
\newcommand{\cU}{\ensuremath{\mathcal U}}
\newcommand{\bbE}{{\ensuremath{\mathbb E}} }
\newcommand{\bbH}{{\ensuremath{\mathbb H}} }
\newcommand{\bbN}{{\ensuremath{\mathbb N}} }
\newcommand{\bbP}{{\ensuremath{\mathbb P}} }
\newcommand{\bbQ}{{\ensuremath{\mathbb Q}} }
\newcommand{\bbR}{{\ensuremath{\mathbb R}} }
\newcommand{\bbZ}{{\ensuremath{\mathbb Z}} }
\newcommand\Item[1][]{%
  \ifx\relax#1\relax  \item \else \item[#1] \fi
  \abovedisplayskip=0pt\abovedisplayshortskip=0pt~\vspace*{-\baselineskip}}
\renewcommand{\>}{\rangle}
\newcommand{\tbp}{{\ensuremath{\tau_{\mathrm{BP}}}} }
\newcommand{\diam}{{\ensuremath{\mathrm{diam}}} }
\newcommand{\qe}{{\ensuremath{q_{\mathrm{eff}}}} }
\newcommand{\1}{{\ensuremath{\mathbbm{1}}} }
\renewcommand{\o}{{\ensuremath{\omega}}}
\renewcommand{\leq}{\leqslant}
\renewcommand{\geq}{\geqslant}
\renewcommand{\le}{\leqslant}
\renewcommand{\ge}{\geqslant}
\renewcommand{\to}{\rightarrow}
\begin{document}
\title{Universality for critical KCM:\\infinite number of stable directions}
\author[,1,2]{Ivailo Hartarsky\thanks{\textsf{hartarsky@ceremade.dauphine.fr}}}
\author[,3]{Laure Mar\^ech\'e\thanks{\textsf{laure.mareche@epfl.ch}}}
\author[,2]{Cristina Toninelli\thanks{\textsf{toninelli@ceremade.dauphine.fr}}}
\affil[1]{DMA UMR 8553, \'Ecole Normale Sup\'erieure\protect\\CNRS, PSL Research University\protect\\45 rue d'Ulm, 75005 Paris, France}
\affil[2]{CEREMADE UMR 7534, Universit\'e Paris-Dauphine\protect\\CNRS, PSL Research University\protect\\Place du Mar\'echal de Lattre de Tassigny, 75775 Paris Cedex 16, France}
\affil[3]{LPSM UMR 8001, Universit\'e Paris Diderot\protect\\CNRS, Sorbonne Paris Cit\'e\protect\\
75013 Paris, France}
\date{\vspace{-0.25cm}\today}
\maketitle
\vspace{-0.75cm}
\begin{abstract}
Kinetically constrained models (KCM)  are reversible interacting particle systems on $\bbZ^d$ with continuous-time constrained Glauber dynamics. They are a natural non-monotone stochastic version of the family of cellular automata with random initial state known as $\cU$-bootstrap percolation. KCM have an interest in their own right, owing to their use for modelling the liquid-glass transition in condensed matter physics.

In two dimensions there are three classes of models with qualitatively different scaling of the infection time of the origin as the density of infected sites vanishes. Here we study in full generality the class termed `critical'. Together with the companion paper by Martinelli and two of the authors~\cite{Hartarsky19II} we establish the universality classes of critical KCM and determine within each class the critical exponent of the infection time as well as of the spectral gap. In this work we prove that for critical models with an infinite number of stable directions this exponent is twice the one of their bootstrap percolation counterpart. This is due to the occurrence of `energy barriers', which determine the dominant behaviour for these KCM but which do not matter for the monotone bootstrap dynamics. Our result confirms the conjecture of Martinelli, Morris and the last author~\cite{Martinelli19b}, who proved a matching upper bound.
\end{abstract}

\noindent\textbf{MSC2010:} Primary 	60K35; Secondary 82C22, 60J27, 60C05
\\
\textbf{Keywords:} Kinetically constrained models, bootstrap percolation, universality, Glauber dynamics, spectral gap.

\section{Introduction}
\label{sec:intro}

Kinetically constrained models (KCM) are interacting particle systems on the integer lattice $\bbZ^d$, which were introduced in the physics literature in the 1980s by Fredrickson and Andersen~\cite{Fredrickson84} in order to model the liquid-glass transition (see e.g.  \cite{Ritort03,Garrahan11} for reviews), a major and still largely open problem in condensed matter physics \cite{Berthier11}. A generic KCM is a continuous-time Markov process of Glauber type  characterised by  a finite collection $\cU$ of finite nonempty subsets of $\bbZ^d\setminus \{ 0 \}$, its \emph{update family}. A configuration $\o$ is defined by assigning to each site $x\in\bbZ^d$ an \emph{occupation variable} $\omega_x\in\{0,1\},$ corresponding to an \emph{empty} or \emph{occupied} site respectively. Each site $x\in\bbZ^d$ waits an independent, mean one, exponential time and then, iff  there exists $U\in \cU$ such that $\omega_y=0$ for all $y\in U+x$, site $x$ is updated to empty with probability $q$ and to occupied with probability $1-q$. Since each $U\in\mathcal U$ is contained in $\bbZ^d\setminus \{ 0 \}$, the constraint to allow the update does not depend on the state of the to-be-updated site. As a consequence, the dynamics satisfies detailed balance w.r.t. the product Bernoulli($1-q$) measure, $\mu$, which is therefore a reversible invariant measure. Hence the process started at $\mu$ is stationary.

Both from  a physical and from a mathematical point of view, a central issue for KCM is to determine the speed of divergence of the characteristic time scales when $q\to 0$. Two key quantities are: (i) the \emph{relaxation time} $\trel$, i.e. the inverse of the spectral gap of the Markov generator (see Definition~\ref{def:PC}) and (ii) the {\sl mean infection time} $\bbE(\tau_0)$, i.e. the mean over the stationary process started at $\mu$ of the first time at which the origin becomes empty. Several works have been devoted  to the study of these time scales for some specific choices of the constraints~\cite{Aldous02,Cancrini08,Mareche20Duarte,Chleboun14,Chleboun16,Martinelli19}  (see also \cite{Garrahan11} section 1.4.1 for a non exhaustive list of references in the physics literature). These results show that KCM exhibit a very large variety of possible scalings depending on the update family $\cU$. A question that naturally emerges, and that has been first addressed in \cite{Martinelli19b}, is whether it is possible to group all possible update families into distinct \emph{universality classes} so that all models of the same class display the same divergence of the time scales.

Before presenting the results and the conjectures of \cite{Martinelli19b}, we should describe  the key connection of KCM  with a class of discrete monotone cellular automata known as $\cU$-bootstrap percolation (or simply bootstrap percolation)~\cite{Bollobas15}.  For $\cU$-bootstrap percolation on $\bbZ^d$, given an update family $\mathcal{U}$ and a set $A_t$ of sites \emph{infected} at time $t$, the infected sites in $A_t$ remain infected at time $t+1$, and every site $x$ becomes infected at time $t + 1$ if the translate by $x$ of one of the sets in $\cU$ is contained in $A_t$. The set of initial infections $A$ is chosen at random with respect to the product Bernoulli measure with parameter $q \in [0,1]$, which identifies with $\mu$: for every $x\in\bbZ^d$ we have $\mu(x\in A)=q$. One then defines the  \emph{critical probability} $q_c\big( \bbZ^d, \cU \big)$ to be the infimum of the $q$ such that with probability one the whole lattice is eventually infected, namely  $\bigcup_{t\geq 0} A_t=\bbZ^d$. A key time scale for this dynamics is the first time at which the origin is infected, $\tbp$. In order to study this infection time for models on  $\bbZ^2$, the update families were classified by Bollob\'as, Smith and Uzzell~\cite{Bollobas15} into three universality classes: \emph{supercritical}, \emph{critical} and \emph{subcritical}, according to a simple geometric criterion (see Definition~\ref{def:preuniv}). In~\cite{Bollobas15} they proved that $q_c\big( \bbZ^2, \cU \big) = 0$ if $\cU$ is supercritical or critical, and it was proved by Balister, Bollob\'as, Przykucki and Smith~\cite{Balister16} that $q_c\big( \bbZ^2, \cU \big) > 0$ if $\cU$ is subcritical.
For supercritical update families, \cite{Bollobas15}  proved that $\tbp=q^{-\Theta(1)}$ w.h.p. as $q\to 0$, while in the critical case $\tbp=\exp(q^{-\Theta(1)})$. The result for critical families was later improved by Bollob\'as, Duminil-Copin, Morris and Smith~\cite{Bollobas14}, who identified the critical exponent $\alpha=\alpha(\cU)$ such that 
$\tbp=\exp(q^{-\alpha+o(1)})$.

Back to KCM, if we fix an update family $\cU$ and an initial configuration $\omega$ and we identify  the empty sites with infected sites, a first basic observation is that the clusters of sites that will never be infected in the $\cU$-bootstrap percolation correspond to clusters of sites which are occupied and will never be emptied under the KCM dynamics. A natural issue is whether there is a direct connection between the infection mechanism of bootstrap percolation and the relaxation mechanism for KCM, and, more precisely, whether the scaling of $\trel$ and $\bbE(\tau_0)$ is connected to the typical value of $\tbp$ when the law of the initial infections is $\mu$.
It is not difficult to establish that $\mu(\tbp)$ provides a lower bound for $\bbE(\tau_0)$ and $\trel$ (see \cite[Lemma~4.3]{Martinelli19} and \eqref{eq:trel:tau}), but in general, as we will explain, this lower bound does not provide the correct behaviour.

In \cite{Martinelli19b}, Martinelli, Morris and the last author proposed that the supercritical class should be refined into \emph{unrooted} supercritical and \emph{rooted} supercritical models in order to capture the richer behavior of KCM. For unrooted models the scaling is of the same type as for bootstrap percolation, $\trel\sim \bbE(\tau_0)=q^{-\Theta(1)}$ as $q \rightarrow 0$~\cite[Theorem~1(a)]{Martinelli19b}\footnote{For the lower bound of $\trel$ one does not need to use the boostrap percolation results, as $\trel\ge q^{-\min_{U\in\cU}|U|}/|\cU|$ by plugging the test function $\1_{\{\omega_0=0\}}$ in Definition~\ref{def:PC}.}, while for rooted models the divergence is much faster, $\bbE(\tau_0)\sim \trel =e^{\Theta((\log q)^2)}$ (see \cite[Theorem~1(b)]{Martinelli19b} for the upper bound and \cite[Theorem~4.2]{Mareche20Duarte} for the lower bound).

Concerning the critical class, the lower bound with $\mu(\tbp)$ mentioned above and the results of \cite{Bollobas15} on bootstrap percolation imply that $\trel$ and $\bbE(\tau_0)$ diverge at least as $\exp(q^{-\Theta(1)})$. In \cite[Theorem~2]{Martinelli19b} an upper bound of the same form was established and a conjecture~\cite[Conjecture~3]{Martinelli19b} was put forward on the value of the critical exponent $\nu$ such that both $\bbE(\tau_0)$ and $\trel$ scale as $\exp(|\log q|^{O(1)}/q^{\nu})$, with $\nu$ in general different from the exponent of the corresponding bootstrap percolation process. Furthermore, a toolbox was developed for the study of the upper bounds, leading to upper bounds matching this conjecture for all models. The main issue left open in \cite{Martinelli19b} was to develop tools to establish sharp lower bounds. A first step in this direction was done by Martinelli and the last two authors~\cite{Mareche20Duarte} by analyzing a specific critical model known as the Duarte model for which the  update family contains all the $2$-elements subsets of the North, South and West neighbours of the origin. Theorem~5.1 of \cite{Mareche20Duarte} establishes a sharp lower bound on the infection and relaxation times for the Duarte KCM that, together with the upper bound in \cite[Theorem~2(a)]{Martinelli19b}, proves $\bbE^{\mbox{\tiny{Duarte}}}(\tau_0)=\exp{(\Theta((\log q)^4/q^2))}$ as $q\to 0$, and the same result holds for $\trel$. The divergence is again much faster than for the corresponding bootstrap percolation model, for which it holds $\tbp=e^{\Theta((\log q)^2/q)}$ w.h.p as $q\to 0$~\cite{Mountford95} (see also \cite{Bollobas17}, from which the sharp value of the constant follows), namely the critical exponent for the Duarte KCM is twice the critical exponent for the Duarte bootstrap percolation. 

Both for Duarte and for supercritical rooted models, the sharper divergence of time scales for KCM is due to the fact that the infection time of KCM is not well approximated by the infection mechanism of the monotone bootstrap percolation process, but is instead the result of a much more complex infection/healing mechanism. Indeed, visiting regions of the configuration space with an anomalous amount of empty sites  is heavily penalised and requires a very long time to actually take place. 
The basic underlying idea is that the dominant relaxation mechanism is an \emph{East-like dynamics} for large \emph{droplets} of empty sites. Here East-like means that the presence of an empty droplet allows to empty (or fill) another adjacent droplet but only in a certain direction (or more precisely in a limited cone of directions). This is reminiscent of the relaxation mechanism for the East model, a prototype one-dimensional KCM for which $x$ can be updated iff $x-1$ is empty, thus  a single empty site allows to create/destroy an empty site  only on its right (see \cite{Faggionato13} for a review on the East model).
For supercritical rooted models, the empty droplets that play the role of the single empty sites for East have a finite (model dependent) size, hence an equilibrium density $q_{\mbox{\tiny{eff}}}= q^{\Theta(1)}$. For the Duarte model, droplets have a size that diverges  as $\ell=| \log q|/q$ and thus an equilibrium density $q_{\mbox{\tiny{eff}}}=q^{\ell}=e^{-(\log q)^2/q}$. Then a (very) rough understanding of the results of \cite{Mareche20Duarte,Martinelli19b} is obtained by replacing $q$ with $q_{\mbox{\tiny{eff}}}$ in the time scale for the East model $T_{\mathrm{rel}}^{\tiny\mbox{East}}=e^{\Theta((\log q)^2)}$\cite{Aldous02}.
The main technical difficulty to translate this intuition into a lower bound is that the droplets cannot be identified with a rigid structure. In \cite{Mareche20Duarte} this difficulty for the Duarte model was overcome by an algorithmic construction that allows to sequentially scan the system in search of sets of empty sites that \emph{could} (without violating the constraint) empty a certain rigid structure. These are the droplets that play the role of the empty sites for the East dynamics.

In \cite{Martinelli19b} all critical models which have an \emph{infinite number of stable directions} (see Section \ref{BP}), of which the Duarte model is but one example, were conjectured to have a critical exponent $\nu=2\alpha$, with $\alpha=\alpha(\cU)$ the critical exponent of the corresponding bootstrap percolation dynamics (defined in Definition~\ref{def:stable:alpha}). The heuristics is the same as for the Duarte model, the only difference being that droplets would have in general size $\ell=|\log q|^{O(1)}/q^{\alpha}$. However, the technique developed in \cite{Mareche20Duarte} for the Duarte model relies heavily on the specific form of the Duarte constraint and in particular on its oriented nature\footnote{Note that, since the Duarte update rules contain only the North, South and West neighbours of the origin, the constraint at a site $x$ does not depend on the sites with abscissa larger than the abscissa of $x$.}, and it cannot be extended readily to this larger class.

In this work, together with the companion paper by Martinelli and two of the authors \cite{Hartarsky19II}, we establish in full generality the universality classes for critical KCM, determining the critical exponent for each class.

Here we treat all choices of $\cU$ for which there is an \emph{infinite number of stable directions} and prove (Theorem \ref{th:main}) a lower bound for $\trel$ and $\bbE(\tau_0)$ that, together with the matching upper bound of \cite[Theorem~2]{Martinelli19b}, yields  \[\bbE(\tau_0)=e^{|\log q|^{O(1)}/q^{2\alpha}}\]
for $q\to 0$ and the same result for $\trel$. Our technique is somewhat inspired by the algorithmic construction of \cite{Mareche20Duarte}, however, the nature of the droplets which move in an East-like way is here much more subtle, and in order to identify them we construct an algorithm which can be seen as a significant improvement on the $\alpha$-covering and $u$-iceberg algorithms developed in the context of bootstrap percolation~\cite{Bollobas14}.

In the companion paper \cite{Hartarsky19II} we prove for the complementary class of models, namely all critical models with a \emph{finite number of stable directions}, an upper bound that (together with the lower bound from bootstrap percolation) yields instead
\[\bbE(\tau_0)=e^{|\log q|^{O(1)}/q^{\alpha}}\]
for $q\to 0$ and the same  result for $\trel$.

A comparison of our results with Conjecture~3 of \cite{Martinelli19b} is due. The class that we consider here is, in the notation of \cite{Martinelli19b}, the class of models with \emph{bilateral difficulty} $\beta=\infty$, hence belong to the \emph{$\alpha$-rooted} class defined therein. Therefore, our Theorem~\ref{th:main} proves Conjecture~3(a) in this case. We underline that it is not a limitation of our lower bound strategy that prevents us from proving Conjecture~3(a) for the other $\alpha$-rooted models, namely those with $2\alpha\le\beta<\infty$. Indeed, as it is proven in the companion paper~\cite{Hartarsky19II}, in this case the conjecture of~\cite{Martinelli19b} is not correct, since it did not take into account a subtle relaxation mechanism which allows to recover the same critical exponent as for the bootstrap percolation dynamics.

The plan of the paper is as follows. In Section~\ref{sec:background} we develop the background for both KCM and bootstrap percolation needed to state our result, Theorem~\ref{th:main}. In  Section~\ref{sec:sketch} we give a sketch of our reasoning and highlight the important points. In Section~\ref{sec:notation} we gather some preliminaries and notation. Section~\ref{sec:droplets} is the core of the paper --- there we define the central notions and establish their key properties, culminating in the Closure Proposition~\ref{prop:closure}. In Section~\ref{sec:East} we establish a connection between the KCM dynamics and an East dynamics and use this to wrap up the proof of Theorem~\ref{th:main}. Finally, in Section~\ref{sec:open} we discuss some open problems.

\section{Models and background}
\label{sec:background}
\subsection{Bootstrap percolation}\label{BP}
Before turning to our models of interest, KCM, let us recall recent universality results for the intimately connected bootstrap percolation models in two dimensions.
$\cU$-bootstrap percolation (or simply bootstrap percolation) is a very general class of monotone transitive local cellular automata on $\bbZ^{2}$ first studied in full generality by Bollob\'as, Smith and Uzzell~\cite{Bollobas15}. Let $\cU$, called \emph{update family}, be a finite family of finite nonempty subsets, called \emph{update rules}, of $\bbZ^2\setminus\{0\}$. Let $A$, called the set of \emph{initial infections}, be an arbitrary subset of $\bbZ^2$. Then the $\cU$-bootstrap percolation dynamics is the discrete time deterministic growth of infection defined by $A_0=A$  and, for each $t \in \bbN$,
\[A_{t+1}=A_t\cup\{x\in\bbZ^2 \colon \exists\, U\in\cU,U+x\subset A_t\}.\]
In other words, at any step each site becomes infected if a rule translated at it is already fully infected, and infections never heal. We define the \emph{closure} of the set $A$ by $[A]=\bigcup_{t\ge0}A_t$ and we say that $A$ is \emph{stable} when $[A]=A$. The set of initial infections $A$ is chosen at random with respect to the product Bernoulli measure $\mu$ with parameter $q \in [0,1]$: for every $x\in\bbZ^2$ we have $\mu(x\in A)=q$. 

Arguably, the most natural quantity to consider for these models is the typical (e.g. mean) value of $\tbp$, the infection time of the origin.

 The combined results of Bollobás, Smith and Uzzell~\cite{Bollobas15} and Balister, Bollobás, Przykucki and Smith~\cite{Balister16} yield a pre-universality partition of all update families into three classes with qualitatively different scalings of the median of the infection time as $q\to0$. In order to define this partition we will need a few definitions. 

For any unitary vector $u\in S^1=\{z\in\bbR^2 \colon \|z\|=1\}$ ($\|\cdot\|$ denotes the Euclidean norm in $\bbR^2$) and any vector $x\in\bbR^2$ we denote $\bbH_{u}(x)=\{y\in\bbR^2 \colon \<u,y-x\><0\}$ --- the open half-plane directed by $u$ passing through $x$. We also set $\bbH_u=\bbH_u(0)$. We say that a direction $u\in S^1$ is \emph{unstable} (for an update family $\cU$) if there exists $U\in\cU$ such that $U\subset \bbH_u$ and \emph{stable} otherwise. The partition is then as follows.
\begin{defn}[Definition~1.3 of~\cite{Bollobas15}]
\label{def:preuniv}
An update family $\cU$ is
\begin{itemize}
    \item \emph{supercritical} if there exists an open semi-circle of unstable directions,
    \item \emph{critical} if it is not supercritical, but there exists an open semi-circle with a finite number of stable directions,
    \item \emph{subcritical} otherwise.
\end{itemize}
\end{defn}

The main result of~\cite{Bollobas15} then states that in the supercritical case $\tbp=q^{-\Theta(1)}$ with high probability as $q\to 0$, while in the critical one $\tbp=\exp(q^{-\Theta(1)})$. The final justification of the partition in Definition~\ref{def:preuniv} was given by Balister, Bollobás, Przykucki and Smith~\cite{Balister16} who proved that the origin is never infected with positive probability for subcritical models for $q>0$  sufficiently small, i.e. $q_c\big( \bbZ^2, \cU \big) > 0$ if $\mathcal{U}$ is subcritical. From the bootstrap percolation perspective supercritical models are rather simple, while subcritical ones remain very poorly understood (see~\cite{Hartarsky18subcritical}). Nevertheless, most of the non-trivial models considered before the introduction of $\cU$-bootstrap percolation, including the $2$-neighbour model  (see~\cite{Aizenman88,Holroyd03} for further results), fall into the critical class, which is also the focus of our work.

Significantly improving the result of \cite{Bollobas15}, Bollobás, Duminil-Copin, Morris and Smith~\cite{Bollobas14} found the correct exponent determining the scaling of $\tbp$ for critical families.
Moreover, they were able to find $\log \tbp$ up to a constant factor. To state their results we need the following crucial notion.

\begin{defn}[Definition~1.2 of~\cite{Bollobas14}]
\label{def:stable:alpha}
Let $\cU$ be an update family and $u\in S^1$ be a direction. Then the \emph{difficulty} of $u$, $\alpha(u)$, is defined as follows.
\begin{itemize}
    \item If $u$ is unstable, then $\alpha(u)=0$.
    \item If $u$ is an isolated stable direction (isolated in the topological sense), then
    \begin{equation}
    \label{eq:def:alpha:u}
    \alpha(u)=\min\{n\in\bbN \colon \exists K\subset\bbZ^2,|K|=n,|[\bbZ^2\cap(\bbH_u\cup K)]\setminus\bbH_u|=\infty\},
    \end{equation}
    i.e. the minimal number of infections allowing $\bbH_u$ to grow infinitely.
    \item Otherwise, $\alpha(u)=\infty$.
\end{itemize}
We define the \emph{difficulty} of $\cU$ by
\begin{equation}
\label{eq:def:alpha}
\alpha(\cU)=\inf_{C\in\cC}\sup_{u\in C}\alpha(u),
\end{equation}
where $\cC=\{\bbH_u\cap S^1 \colon u\in S^1\}$ is the set of open semi-circles of $S^1$. 
\end{defn}

It is not hard to see (Theorem~1.10 of~\cite{Bollobas15}, Lemma~2.6 of~\cite{Bollobas14}) that the set of stable directions is a finite union of closed intervals of $S^1$ and that (Lemmas~2.7 and 2.10 of~\cite{Bollobas14}) \eqref{eq:def:alpha:u} also holds for unstable and \emph{strongly stable} directions, that is directions in the interior of the set of stable directions (but not for \emph{semi-isolated stable} directions i.e. endpoints of non-trivial stable intervals). Furthermore (see \cite[Lemma~2.7]{Bollobas14}, \cite[Lemma~5.2]{Bollobas15}), $1\le \alpha(u)<\infty$ if and only if $u$ is an isolated stable direction, so that $\cU$ is critical if and only if $1\le\alpha(\cU)<\infty$. 
As a final remark we recall that, contrary to determining whether an update family is critical, finding $\alpha(\cU)$ is a NP-hard question~\cite{Hartarsky18NP}.

We are now ready to describe the universality results. A weaker form of the result of~\cite{Bollobas14} is that $\tbp=\exp(q^{-\alpha(\cU)+o(1)})$ with high probability as $q \rightarrow 0$. 
For the full result however, we need one last definition.
\begin{defn}
\label{def:balanced}
A critical update family $\cU$ is \emph{balanced} if there exists a closed semi-circle $C$ such that $\max_{u\in C}\alpha(u)=\alpha(\cU)$ and \emph{unbalanced} otherwise.
\end{defn}
Then~\cite{Bollobas14} provides that for balanced models $\tbp=\exp(\Theta(1)/q^{\alpha(\cU)})$ with high probability as $q \to 0$, while for unbalanced ones $\tbp=\exp(\Theta((\log q)^2)/q^{\alpha(\cU)})$. These are the best general estimates currently known. 
We refer to~\cite{Morris17,Morris17b} for recent surveys on these results as well as on sharper results for some  specific models.

\subsection{Kinetically constrained models}
\label{sec:KCM}
Returning to KCM, let us first define the general class of KCM introduced by Cancrini, Martinelli, Roberto and the last author~\cite{Cancrini08} directly on $\bbZ^2$. Fix a parameter $q\in[0,1]$ and an update family $\cU$ as in the previous section. The corresponding KCM is a continuous-time Markov process on $\Omega=\{0,1\}^{\bbZ^2}$ which can be informally defined as follows. A configuration $\omega$ is defined by assigning to each site $x\in\bbZ^2$ an \emph{occupation variable} $\omega_x\in\{0,1\}$ corresponding to an \emph{empty} (or \emph{infected}) and \emph{occupied} (or \emph{healthy}) site respectively. Each site waits an independent exponentially distributed time with mean $1$ before attempting to update its occupation variable. At that time, if the configuration is completely empty on at least one update rule translated at $x$, i.e. if $\exists U\in\cU$ such that $\omega_y=0$ for all $y\in U+x$, then we perform a \emph{legal update} or \emph{legal spin flip} by setting $\omega_x$ to $0$ with probability ${q}$ and to $1$ with probability ${1-q}$. Otherwise the update is discarded. Since the constraint to allow the update never depends on the state of the to-be-updated site, the product measure $\mu$ is a reversible invariant measure and the process started at $\mu$ is stationary. More formally, the KCM is the Markov process on $\Omega$ with generator $\cL$ acting on local functions $f\colon \Omega \mapsto \bbR$  as
\begin{equation}
\label{eq:generator}
(\cL f)(\omega)= \sum_{x\in \bbZ^2}c_x(\omega)\left(\mu_x(f) - f \right)(\omega),
\end{equation}
for any $\omega \in \Omega$, where $\mu_x(f)$ denotes the average of $f$ when the occupation variable at $x$ has law $\mathrm{Ber}(1-q)$ and the other occupation variables are set to $\{\omega_y\}_{y\neq x}$, and $c_x$ is the indicator function of the event that there exists $U\in \cU$ such that $U+x$ is completely empty, i.e. $\omega_{U+x}\equiv 0$. We refer the reader to chapter~I of~\cite{Liggett05}, where the general theory of interacting particle systems is detailed, for a precise construction of the Markov process and the proof that $\cL$ is the generator of a reversible Markov process $\{\omega(t)\}_{t\geq 0}$ on $\Omega$ with reversible measure $\mu$.

The corresponding Dirichlet form is defined as
\begin{equation}\label{def:Dirichlet}
\cD(f)= \sum_{x\in \bbZ^2}\mu\big(c_x \mbox{Var}_x(f)\big),
\end{equation}
where $\mbox{Var}_x(f)$ denotes the variance of the local function $f$ with respect to the variable
$\omega_x$ conditionally on $\{\omega_y\}_{y\neq x}$.
The expectation with respect to the stationary process
with initial distribution $\mu$ will be denoted by $\bbE=\bbE_{\mu}^{q,\cU}$. Finally, given a configuration $\omega\in\Omega$ and a site $x\in\bbZ^2$, we will denote by $\omega^x$
the configuration obtained from $\omega$ 
by \emph{flipping} site $x$, namely by setting $(\omega^x)_x=1-\omega_x$ and $(\omega^x)_y=\omega_y$ for all $y\neq x$. For future use we also need the following definition of legal paths, that are essentially sequences of configurations obtained by successive legal updates.
\begin{defn}[Legal path] 
\label{def:legal}Fix an update family $\cU$, then a \emph{legal path} $\gamma$ in $\Omega$ is a finite sequence $\gamma=\big(\omega_{(0)},\dots,\omega_{(k)}\big)$ such that, for each $i\in \{1,\dots,k\},$ the configurations $\omega_{(i-1)}$ and $\omega_{(i)}$ differ by a legal (with respect to the choice of $\cU$) spin flip at some vertex $v= v(\omega_{(i-1)},\omega_{(i)})$.
\end{defn}

As mentioned in Section~\ref{sec:intro}, our goal is to prove sharp bounds on the characteristic time scales of critical KCM. Let us start by defining precisely these time scales, namely the relaxation time $\trel$ (or inverse of the spectral gap) and the mean infection time $\bbE(\tau_0)$ (with respect to the stationary process).

\begin{defn}[Relaxation time $\trel$]
\label{def:PC} Given an update family $\cU$ and $q\in[0,1]$,
we say that $C>0$ is a \emph{Poincar\'e constant} for the corresponding KCM if, for all local functions $f$, we have
\begin{equation}
  \label{eq:gap}
\mbox{Var}_{\mu}(f)=\mu(f^2)-\mu(f)^2 \leq C \, \cD(f).
\end{equation}
If there exists a finite Poincar\'e constant, we define
\[
\trel=\trel(q,\cU)=\inf\left\{ C > 0 \,:\, C \text{ is a Poincar\'e constant} \right\}.
\]
Otherwise we say that the relaxation time is infinite.
\end{defn}
A finite relaxation time implies that the reversible measure $\mu$ is mixing for the semigroup $P_t=e^{t\cL}$ with exponentially decaying time auto-correlations (see e.g. \cite[Section~2.1]{Bakry06}).

\begin{defn}[Infection time $\tau_0$]
The random time $\tau_0$ at which the origin is first infected is given by
\[\tau_0 = \inf\big\{ t \ge 0 \,:\, \omega_0(t)=0 \big\},\]
where we adopt the usual notation letting  $\omega_0(t)$ be the value of the configuration $\omega(t)$ at the origin, namely $\omega_0(t) =(\omega(t))_0$.
\end{defn} 

\paragraph{The East model}
We close this section by defining a specific example of KCM on $\bbZ$, the East model of J\"ackle and Eisinger~\cite{Jackle91}, which
will be crucial to understand our results (KCM on $\bbZ$ are defined in the same way as KCM on $\bbZ^2$).
It 
is defined by an update family composed by a single rule containing only the site to the left of the origin ($-1$). In other words, site $x$ can be updated iff $x-1$ is empty. For this model both $\trel$ and $\bbE(\tau_0)$ scale as $\exp\left(\frac{(\log q)^2}{2\log 2}\right)$ as $q\to 0$, see  \cite{Aldous02,Cancrini08,Chleboun14}\footnote{Actually these references focus on the study of $\trel$. A matching upper bound for $\bbE(\tau_0)$ follows 
from \eqref{eq:trel:tau}. The lower bound for $\bbE(\tau_0)$ follows easily from the lower bound for $\bbP(\tau_0>t)$ with $t=\exp{(\log(q)^2/2\log 2)}$ obtained in the proof of Theorem~5.1 of \cite{Cancrini10}.}. One of the key ingredients behind this scaling is the following combinatorial result~\cite{Sollich99} (see \cite[Fact 1]{Chung01} for a more mathematical formulation).
\begin{prop}
\label{lem:comb_East}
Consider the East model on $\{1,\dots,M\}$ defined by fixing $\omega_{0}=0$ at all time. Then any legal path $\gamma$ connecting the fully occupied configuration (namely $\omega$ s.t. $\omega_x=1$ for all $x\in\{1,\dots,M\}$) to a configuration $\omega'$ such that $\omega'_M=0$ goes through a configuration with at least $\lceil\log_2 (M+1)\rceil$ empty sites.
\end{prop}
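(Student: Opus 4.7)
My plan is to prove this by strong induction on $M$. The base case $M=1$ is immediate, since the target configuration itself contains one empty site and $\lceil\log_2 2\rceil=1$.

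For the inductive step, fix $M\ge 2$ and let $\gamma=(\omega_{(0)},\ldots,\omega_{(k)})$ be a legal path from the fully occupied configuration to some $\omega_{(k)}$ with $(\omega_{(k)})_M=0$. Set $t^*=\min\{i\ge 0:(\omega_{(i)})_M=0\}$. The East constraint at the flip at time $t^*$, which must occur at site $M$, forces $(\omega_{(t^*-1)})_{M-1}=0$. Moreover, on $[0,t^*-1]$ the variable $\omega_M$ is constantly equal to $1$, so no flip occurs at site $M$, and the truncation $(\omega_{(0)},\ldots,\omega_{(t^*-1)})$ is a legal path in the East model on $\{1,\ldots,M-1\}$ with $\omega_0=0$ pinned, ending in a configuration with $\omega_{M-1}=0$. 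By the inductive hypothesis applied to this sub-path, some configuration along it has at least $\lceil\log_2 M\rceil$ empty sites in $\{1,\ldots,M-1\}$, and hence also in $\{1,\ldots,M\}$.

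If $M$ is not a power of $2$ then $\lceil\log_2 M\rceil=\lceil\log_2(M+1)\rceil$ and the induction closes. If $M=2^k$ the argument above supplies only $k$ empty sites while $\lceil\log_2(M+1)\rceil=k+1$, so one extra empty site is required. The key observation is that the flip at time $t^*$ creates a fresh empty at $M$, so, writing $n(i)$ for the number of empty sites of $\omega_{(i)}$ in $\{1,\ldots,M\}$, we have $n(t^*)=n(t^*-1)+1$. It is therefore enough to show $n(t^*-1)\ge k$.

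To rule out the alternative $n(t^*-1)<k$ one uses a strengthening of the induction hypothesis which asserts a structural rigidity of empty-count-saturating sub-paths: any legal path in the East model on $\{1,\ldots,2^k-1\}$ from the fully occupied configuration to $\omega_{2^k-1}=0$ with peak empty-count equal to the bound $\lceil\log_2 2^k\rceil=k$ must end with $\omega_{2^k-2}$ also empty, and iteratively with a residual ``staircase'' of empties on the rightmost sites. Propagating this rigidity through the dyadic decomposition of $M=2^k$ yields $n(t^*-1)\ge k$ and closes the induction. The main obstacle is precisely this structural rigidity of saturating sub-paths, which lies at the heart of the Chung--Diaconis--Graham argument; I would handle it by a secondary induction on $k$ tracking the endpoint of the optimal sub-paths, or equivalently by recasting the problem in the combinatorial game formulation of that paper, in which the recursive optimal strategy makes the rigidity transparent.
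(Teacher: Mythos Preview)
The paper does not give its own proof of this proposition; it is quoted from the literature (Sollich--Evans and Chung--Diaconis--Graham), so there is nothing to compare against on that side. Your attempt must therefore stand on its own, and it does not.

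Your strong induction on $M$ is fine when $M$ is not a power of $2$, but the heart of the matter is precisely the case $M=2^k$, and there your argument has a genuine gap. You correctly observe that the inductive hypothesis only guarantees \emph{some} configuration along the sub-path on $\{1,\dots,M-1\}$ has $k$ zeros, not that the \emph{final} configuration $\omega_{(t^*-1)}$ does. Your proposed fix is to show $n(t^*-1)\ge k$ via a ``structural rigidity'' of peak-saturating paths, but you never state this rigidity precisely, let alone prove it: you write that you ``would handle it by a secondary induction on $k$'' or by ``recasting the problem in the combinatorial game formulation''. That is not a proof; it is a pointer to where the real work lies. In fact, establishing that every peak-$k$ path on $\{1,\dots,2^k-1\}$ ending at $\omega_{2^k-1}=0$ must terminate with exactly $k$ zeros is essentially equivalent in difficulty to the full Chung--Diaconis--Graham characterisation of reachable configurations --- so you have reduced the problem to itself.

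The standard argument avoids this by inducting on the peak budget $n$ rather than on $M$: one shows that the maximal site reachable with at most $n$ simultaneous zeros satisfies $f(n+1)\le 2f(n)+1$, hence $f(n)\le 2^n-1$. The doubling comes from a two-phase decomposition: along a path with peak $\le n+1$ reaching site $M$, one locates a time interval on which a suitable intermediate site stays empty, so that it serves as a frozen boundary; the budget on either side of this pinned zero is only $n$, and the inductive hypothesis applied to each side gives the bound. This sidesteps entirely the need to control the number of zeros at a specific time, which is what trips up your approach.
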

This logarithmic `energy barrier', to employ the physics jargon, and the fact that at equilibrium the typical distance to the first empty site is $M=\Theta(1/q)$ are responsible for the divergence of the time scales as roughly $1/q^{\lceil\log_2(M+1)\rceil}=e^{\Theta((\log q)^2)}$.

\subsection{Result}
In this paper we study critical KCM with an infinite number of stable directions or, equivalently, with a non-trivial interval of stable directions. Recall that $\bbE$ denotes the expectation with respect to the stationary KCM process.
\begin{thm}
\label{th:main}
Let $\cU$ be a critical update family with an infinite number of stable directions. Then there exists a sufficiently large constant $C>0$ such that
\[\bbE(\tau_0)\ge \exp\left(1/\left(Cq^{2\alpha(\cU)}\right)\right),\]
as $q\to0$ and the same asymptotics holds for $T_{\mathrm{rel}}$.
\end{thm}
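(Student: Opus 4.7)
The plan is to reduce the problem to the one-dimensional East model and invoke the combinatorial energy barrier of Proposition~\ref{lem:comb_East}. The guiding heuristic is that the KCM relaxes via the motion of rare \emph{droplets} which play the role of the empty sites in the East dynamics. Because $\cU$ has an infinite set of stable directions, there is a non-trivial closed interval of stable directions, and the endpoints of such an interval (semi-isolated stable directions) have difficulty exactly $\alpha(\cU)$. A droplet should therefore be a patch of empty sites containing a minimal witness of $\alpha(\cU)$ infections that allows a half-plane $\bbH_u$, for $u$ semi-isolated, to grow infinitely. Such a droplet can be chosen with linear size $\ell=|\log q|^{O(1)}/q^{\alpha(\cU)}$, giving an effective density
\[\qe\ :=\ q^{\ell}\ =\ \exp\bigl(-|\log q|^{O(1)}/q^{\alpha(\cU)}\bigr).\]

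The first step (Section~\ref{sec:droplets}) is the algorithmic construction. I would design an algorithm that, given a configuration $\omega$, scans the ambient box in a direction aligned with the chosen semi-isolated stable direction $u$ and outputs a family of droplets with three key properties: (a) each droplet corresponds to an arrangement of empty sites of equilibrium weight at most $\qe$; (b) the droplet family exhausts the bootstrap-percolation closure of the infected set, so the absence of a droplet near the origin prevents the origin from being infected; (c) the family is essentially stable under a single legal spin flip, i.e.\ the number of droplets can change only by $\pm 1$. This refines the $\alpha$-covering and $u$-iceberg algorithms of~\cite{Bollobas14} by scanning not only across the critical direction but also accounting for the long-range energy barriers coming from the entire stable interval. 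The Closure Proposition~\ref{prop:closure} would precisely encode these properties.

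The second step (Section~\ref{sec:East}) is the East-like comparison. Fix a box $\Lambda$ of side length comparable to $1/\qe$, chosen so that $\Lambda$ typically contains only $\Theta(1)$ droplets. Any legal path in the KCM that empties the origin must, after applying the algorithm, induce a legal path of an \emph{East-like} dynamics on droplets on an interval of length of order $1/\qe$. Proposition~\ref{lem:comb_East} then forces the induced path to traverse a configuration with at least $\lceil \log_2(1/\qe)\rceil = |\log q|^{O(1)}/q^{\alpha(\cU)}$ simultaneous droplets, whose equilibrium probability is at most
\[\qe^{\log_2(1/\qe)}\ =\ \exp\bigl(-|\log q|^{O(1)}/q^{2\alpha(\cU)}\bigr).\]
A standard bottleneck argument (plugging a suitable test function in Definition~\ref{def:PC}) converts this into the desired lower bound on $\trel$, and an analogous lower bound for $\bbE(\tau_0)$ follows from the relation between $\trel$ and the hitting time at the origin.

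The main obstacle is the first step. Unlike the Duarte model, whose oriented nature makes the scan of~\cite{Mareche18} natural, the interplay between a semi-isolated stable direction and the strongly stable directions inside the same stable interval produces droplets whose admissible shapes and possible East-like motions depend subtly on $\cU$. Designing a scan that identifies droplets without double counting, controls the entropy of merging two nearby droplets into a single larger one, and ensures near-monotonicity of the droplet count under legal flips is the crux of the argument and the source of the significant refinement over the algorithms of bootstrap percolation announced in the introduction.
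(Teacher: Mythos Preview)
Your high-level strategy matches the paper's: encode droplets as East sites, invoke Proposition~\ref{lem:comb_East} to force $\Theta(1/q^\alpha)$ simultaneous droplets, then conclude via a test function. However, two of your concrete choices are wrong and would not give the theorem as stated. First, semi-isolated stable directions have difficulty $\infty$, not $\alpha(\cU)$ (Definition~\ref{def:stable:alpha}); the paper instead selects two \emph{strongly} stable directions $u_1,u_2$ from the interior of a stable interval together with two further directions $v_1,v_2$ of difficulty at least $\alpha$ (Lemma~\ref{lem:directions}), and the droplets are the Distorted Young Diagrams of Definitions~\ref{def:DYD}--\ref{def:CDYD} scanned along $u'=(u_1+u_2)/2$. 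Second, a droplet is not ``a patch containing a minimal witness of $\alpha$ infections''; it is a DYD of size $L=1/(C_5q^\alpha)$ (no logarithms) that is \emph{spanned}, i.e.\ contains $\Theta(L)$ disjoint clusters each with $\ge\alpha$ sites (Lemma~\ref{lem:extremal}). The relevant estimate is $\mu(D\text{ spanned})\le e^{-cL}$ (Lemma~\ref{lem:exp:decay:diam}), so $\qe=e^{-L}$ rather than $q^\ell$; your formula would insert spurious $|\log q|$ powers and miss the clean exponent $1/(Cq^{2\alpha})$ in Theorem~\ref{th:main}.

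Your property (c) is indeed what is needed, but obtaining it (Lemma~\ref{lem:chain}) is not a matter of a single scan. The paper runs the (modified) droplet algorithm on the bootstrap \emph{closure} $[\omega]_{\partial_i}$ relative to a sequence of half-plane boundaries $\partial_i$ and records an arrow variable in each column; the Closure Proposition~\ref{prop:closure} is what guarantees that the droplets for $[\omega]_{\partial_i}$ are contained in those for $\omega$, so that the arrows change only in an alternating chain under a legal flip. The boundary $\partial$ and the cut droplets (CDYD) are essential here: without them a spanned droplet near the scan front could depend on infections arbitrarily far to the right, and the East structure would not emerge. This closure-versus-configuration invariance, not the scan itself, is the technical heart of the argument.
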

This theorem combined with the upper bound of Martinelli, Morris and the last author~\cite[Theorem~2(a)]{Martinelli19b}, determines the critical exponent of these models to be $2\alpha$ in the sense of Corollary~\ref{cor:univ} below. We thus complete the proof of universality and Conjecture~3(a) of~\cite{Martinelli19b} for these models\footnote{The conjecture involuntarily asks for a positive power of $\log q$, which we do not expect to be systematically present (see Conjecture~\ref{conj:logs}).}.
\begin{cor}
\label{cor:univ}
Let $\cU$ be a critical update family with an infinite number of stable directions. Then
\[q^{2\alpha(\cU)}\log\bbE(\tau_0)=(-\log q)^{O(1)}\]
as $q\to0$ and the same holds for $\trel$.
\end{cor}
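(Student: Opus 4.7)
The plan is to prove the lower bound by a bottleneck (test-function) argument for the Dirichlet form in~\eqref{def:Dirichlet}, lifting the combinatorial East energy barrier of Proposition~\ref{lem:comb_East} to the scale of rare \emph{droplets} of empty sites. The guiding heuristic fixes the exponent as follows: I aim to identify droplets of linear size $\ell=|\log q|^{O(1)}/q^{\alpha(\cU)}$ with equilibrium density $q_{\mathrm{eff}}=\exp(-|\log q|^{O(1)}/q^{\alpha(\cU)})$ such that (i) a droplet near the origin is necessary to allow any legal sequence of flips eventually emptying $0$, and (ii) droplets can only be created or destroyed in an East-like fashion, with the help of an already-present adjacent droplet in a restricted cone of directions. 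Since the typical distance from the origin to the nearest droplet is $M\sim 1/q_{\mathrm{eff}}$, Proposition~\ref{lem:comb_East} applied to the induced droplet dynamics forces the bottleneck configuration to contain $\lceil\log_2(M+1)\rceil\sim|\log q|^{O(1)}/q^{\alpha(\cU)}$ simultaneous droplets, each costing a factor $q_{\mathrm{eff}}$ in equilibrium probability; the resulting lower bound is of order $q_{\mathrm{eff}}^{-\lceil\log_2 M\rceil}=\exp(|\log q|^{O(1)}/q^{2\alpha(\cU)})$, matching the claim.

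The heart of the proof, to be carried out in Section~\ref{sec:droplets} and culminating in the Closure Proposition~\ref{prop:closure}, is the construction of this droplet notion via an algorithmic scan generalising the $\alpha$-covering and $u$-iceberg procedures of~\cite{Bollobas14} and the Duarte algorithm of~\cite{Mareche18}. Starting from a configuration $\omega$, one iteratively searches for clusters of empty sites that collectively \emph{could} empty an adjacent rigid region of linear size $\ell$ in some preferred stable direction without ever violating the $\cU$-constraint. The hypothesis that there are infinitely many stable directions is exploited here to select a direction $u$ lying in the interior of a stable interval, in which every attempt to grow the infected region beyond $\bbH_u$ costs the full $\alpha(\cU)$-difficulty; consequently, the boundary of a droplet behaves \emph{rigidly} in that direction and the induced droplet dynamics becomes oriented. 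The properties to verify are: stability of droplets under $\cU$-bootstrap closure, the rarity bound $\mu(\text{droplet at }x)\le q_{\mathrm{eff}}$, and an East-like monotonicity ensuring that no legal flip in a droplet-free region can create a new droplet except adjacent to a pre-existing one.

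Granted these properties, Section~\ref{sec:East} would compare the KCM to an effective East chain on a coarse-grained lattice whose single particles are droplets at density $q_{\mathrm{eff}}$. Concretely, following the Duarte strategy of~\cite{Mareche18}, I would plug into Definition~\ref{def:PC} a test function $f$ that is essentially the indicator of ``a long strip aimed at the origin in direction $-u$ is free of droplets within distance $\Theta(1/q_{\mathrm{eff}})$''. The combinatorial barrier of Proposition~\ref{lem:comb_East}, transported to the projected droplet chain via the monotonicity of step~2, would give $\mathrm{Var}_\mu(f)/\cD(f)\ge\exp(|\log q|^{O(1)}/q^{2\alpha(\cU)})$, and hence the announced lower bound on $\trel$. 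The matching bound on $\bbE(\tau_0)$ would then follow either by the standard comparison~\eqref{eq:trel:tau} or, equivalently, by a direct first-passage estimate using the same strip event.

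The main obstacle I anticipate is the droplet construction itself. In the oriented Duarte case~\cite{Mareche18} the scanning algorithm is essentially one-dimensional and the resulting notion of droplet is unambiguous. For a generic critical family with an infinite interval of stable directions, by contrast, the construction must simultaneously (a) make droplets large enough to force the $q_{\mathrm{eff}}$ rarity, (b) keep them sharp enough to intercept every legal route to the origin, and (c) ensure that the algorithmic closure terminates and produces a \emph{deterministic, monotone} droplet structure compatible with the East projection of step~3 — otherwise a single flip could spuriously create or destroy droplets far from its support, and the bottleneck argument collapses. Balancing these three requirements while preserving the East-like monotonicity of the droplet dynamics is the core technical challenge and is precisely where the announced ``significant improvement on the $\alpha$-covering and $u$-iceberg algorithms'' of~\cite{Bollobas14} is needed.
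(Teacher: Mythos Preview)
Your proposal conflates the corollary with the underlying Theorem~\ref{th:main}. In the paper, Corollary~\ref{cor:univ} carries no separate proof: it is the immediate combination of the lower bound $\bbE(\tau_0)\ge\exp(1/(Cq^{2\alpha}))$ from Theorem~\ref{th:main} with the upper bound $\log\bbE(\tau_0)\le(\log q)^{O(1)}/q^{2\alpha}$ already established in~\cite[Theorem~2(a)]{Martinelli18}. You never invoke that upper bound, so as a proof of the two-sided estimate $q^{2\alpha}\log\bbE(\tau_0)=(-\log q)^{O(1)}$ the proposal is incomplete --- half of the statement is simply missing.

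Read instead as a sketch of Theorem~\ref{th:main}, your outline is essentially the paper's own strategy: build droplets via an improved $\alpha$-covering-type algorithm (Section~\ref{sec:droplets}), show they are rare and stable under closure (Lemma~\ref{lem:exp:decay:diam}, Proposition~\ref{prop:closure}), project the KCM to a renormalised East dynamics (Section~\ref{sec:East}), and conclude via the energy-barrier Proposition~\ref{lem:comb_East} and a test-function bottleneck computation. Two corrections of detail are worth noting. First, the paper's critical droplet scale is $L=1/(C_5q^\alpha)$ with no logarithmic factor; the $(\log q)^{O(1)}$ slack in the corollary comes entirely from the \emph{upper} bound of~\cite{Martinelli18}, not from the lower-bound construction here. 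Second, the inequality~\eqref{eq:trel:tau} reads $\trel\ge q\,\bbE(\tau_0)$ and therefore transfers a lower bound on $\bbE(\tau_0)$ to $\trel$, not the other way around as you suggest; the paper handles $\bbE(\tau_0)$ by a separate (though parallel) test-function argument using~\cite[Corollary~3.4]{Mareche18} and a function satisfying the extra constraint $\phi(\omega)=0$ whenever $\omega_0=0$ (Lemma~\ref{persistence}).
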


Universality for the remaining critical models is proved in a companion paper by Martinelli and the first and third authors~\cite{Hartarsky19II} and, in particular, Conjecture~3(a) of~\cite{Martinelli19b} is disproved for models other than those covered by Theorem~\ref{th:main}.
It is important to note that Theorem~\ref{th:main} significantly improves the best known results for all models  with the exception of the recent result of Martinelli and the last two authors~\cite{Mareche20Duarte} for the Duarte model. Indeed, the previous bound had exponent $\alpha$, and was proved via the general (but in this case far from optimal) lower bound with the mean infection time for the corresponding bootstrap percolation model~\cite[Lemma~4.3]{Martinelli19}.

\section{Sketch of the proof}
\label{sec:sketch}
In this section we outline roughly the strategy to derive our main result, Theorem~\ref{th:main}. The hypothesis of infinite number of stable directions provides us with an interval of stable directions. We can then construct stable `droplets' of shape as in Figure~\ref{fig:DYD} (see Definitions~\ref{def:DYD} and \ref{def:CDYD}), where we recall from Section~\ref{BP} that a set is stable if it coincides with its closure. Thus, if all infections are initially inside a droplet, this will be true at any time under the KCM dynamics. The relevance and advantage of such shapes come from the fact that only infections situated to the left of a droplet can induce growth left. This is manifestly not feasible without the hypothesis of having an interval of stable directions.
It is worth noting that these shapes, which may seem strange at first sight, are actually very natural and intrinsically present in the dynamics. Indeed, such is the shape of the stable sets for a representative model of this class -- the modified 2-neighbour model with one (any) rule removed, that is the three-rule update family with rules $\{(-1,0),(0,1)\}$,$\{(-1,0),(0,-1)\}$,$\{(0,-1),(1,0)\}$ (it can also be seen as the modified Duarte model with an additional rule). The stable sets in this case are actually  Young diagrams.

We construct a collection of such droplets covering the initial configuration of infections, so that it gives an upper bound on the closure. To do this, we devise an improvement of the $\alpha$-covering algorithm of Bollobás, Duminil-Copin, Morris and Smith~\cite{Bollobas14}. It is important for us not to overestimate the closure as brutally. Indeed, a key step and the main difficulty of our work is the Closure Proposition~\ref{prop:closure}, which roughly states that the collections of droplets associated to the closure of the initial infections is equal to the collection for the initial infections. This is highly non-trivial, as in order not to overshoot in defining the droplets, one is forced to ignore small patches of infections (larger than the ones in~\cite{Bollobas14}), which can possibly grow significantly when we take the closure for the bootstrap percolation process and especially so if they are close to a large infected droplet. In order to remedy this problem, we introduce a relatively intrinsic notion of `crumb' (see Definition~\ref{def:crumb}) such that its closure remains one and does not differ too much from it. A further advantage of our algorithm for creating the droplets over the one of~\cite{Bollobas14} is that it is somewhat canonical, with a well-defined unique output, which has particularly nice `algebraic' description and properties (see Remark~\ref{rem:algo}). Another notable difficulty we face is systematically working in roughly a half-plane (see Remark~\ref{rem:generalisations:algo} for generalisations) with a fully infected boundary condition, but we manage to extend our reasoning to this setting very coherently.

Finally, having established the Closure Proposition~\ref{prop:closure} alongside standard and straightforward results like an Aizenmann-Lebowitz Lemma~\ref{lem:AL} and an exponential decay of the probability of occurrence of large droplets (Lemma~\ref{lem:exp:decay:diam}), we finish the proof via the following approach,
inspired by the one 
developed by Martinelli and the last two authors~\cite{Mareche20Duarte} for the Duarte model.
The key step here (see Section \ref{sec:East}) is mapping the KCM legal paths to those of an East dynamics via a suitable renormalisation. 
 Roughly speaking, we say that a renormalised site is empty if it contains a large droplet of infections. However, for the renormalised configuration to be mostly invariant under the original KCM dynamics, we rather look for the droplets in the closure of the original set of infections instead. This is where the Closure Proposition~\ref{prop:closure}  is used to compensate the fact that the closure of equilibrium is not equilibrium. 
In turn, this mapping  together with the combinatorial result for the East model recalled in Section~\ref{sec:KCM} (Proposition \ref{lem:comb_East}), yield a bottleneck for our dynamics corresponding to the creation of $\log(1/\qe)$ droplets, where $1/\qe$ is the equilibrium distance between two empty sites in the renormalized lattice, and $\qe\sim e^{-1/q^{\alpha}}$. This provides for the time scales the desired lower bound $q_{\mathrm{eff}}^{\log(\qe)}\sim e^{1/q^{2\alpha}}$ of Theorem~\ref{th:main}. The last part of the proof follows very closely the ideas put forward 
in~\cite{Mareche20Duarte} for the Duarte model. However, in~\cite{Mareche20Duarte}, there was no need to develop a subtle droplet algorithm since, owing to the oriented character of the Duarte constraint, droplets could simply be identified with some large infected vertical segments. It is also worth noting that, thanks to the less rigid notion of droplets that we develop in the general setting, some of the difficulties faced in~\cite{Mareche20Duarte} for Duarte are no longer present here.

\section{Preliminaries and notation}
\label{sec:notation}
Let us fix a critical update family $\cU$ with an infinite number of stable directions for the rest of the paper. We will omit $\cU$ from all notation, such as $\alpha(\cU)$.

\paragraph{Directions}
The next lemma establishes that one can make a suitable choice of $4$ stable directions, which we will use for all our droplets. At this point the statement should look very odd and technical, but it simply reflects the fact that we have a lot of freedom for the choice and we make one which will simplify a few of the more technical points in later stages. Nevertheless, this is to a large extent not needed besides for concision and clarity.

\begin{figure}[t]
\floatbox[{\capbeside\thisfloatsetup{capbesideposition={right,top}}}]{figure}[\FBwidth]{
\begin{tikzpicture}[line cap=round,line join=round,x=2.0cm,y=2.0cm]
\clip(-1.05,-1.05) rectangle (1.05,1.05);
\draw(0,0) circle (2cm);
\draw [->] (0,0) -- (-0.99,0.17);
\draw [->] (0,0) -- (-0.99,-0.17);
\draw [->] (0,0) -- (0.78,-0.63);
\draw [->] (0,0) -- (-0.16,0.99);
\draw [->,dash pattern=on 1pt off 1pt] (0,0) -- (0.99,-0.17);
\draw [->,dash pattern=on 1pt off 1pt] (0,0) -- (0.99,0.17);
\draw [shift={(0,0)},line width=2pt,color=red]  plot[domain=2.78:3.62,variable=\t]({1*1*cos(\t r)+0*1*sin(\t r)},{0*1*cos(\t r)+1*1*sin(\t r)});
\draw [shift={(0,0)},line width=2pt,color=red]  plot[domain=1.73:2.34,variable=\t]({1*1*cos(\t r)+0*1*sin(\t r)},{0*1*cos(\t r)+1*1*sin(\t r)});
\begin{scriptsize}
\draw[color=black] (-0.5,0.15) node {$u_1$};
\draw[color=black] (-0.5,-0.15) node {$u_2$};
\draw[color=black] (0.4,-0.45) node {$v_1'$};
\draw[color=black] (0,0.5) node {$v_2'$};
\draw[color=black] (0.6,-0.05) node {$u_1+\pi$};
\draw[color=black] (0.55,0.2) node {$u_2-\pi$};
\fill [color=red] (-0.16,0.99) circle (1.5pt);
\fill [color=red] (0.78,-0.63) circle (1.5pt);
\fill [color=red] (-0.32,-0.95) circle (1.5pt);
\fill [color=red] (0.88,0.48) circle (1.5pt);
\fill [color=red] (-0.69,0.72) circle (1.5pt);
\fill [color=red] (-0.94,0.35) circle (1.5pt);
\fill [color=red] (-0.89,-0.46) circle (1.5pt);
\draw[color=black] (0.95,0.48) node {$1$};
\draw[color=black] (0.85,-0.63) node {$2$};
\draw[color=black] (-0.4,-1.0) node {$3$};
\end{scriptsize}
\end{tikzpicture}}
{\caption{Illustration of Lemma~\ref{lem:directions} and its proof. Thickened arcs represent intervals of strongly stable directions. Solid dots represent isolated and semi-isolated stable directions. The difficulties of the isolated stable directions are indicated next to them and yield that the difficulty of the model is $\alpha=2$. The directions chosen in Lemma~\ref{lem:directions} are the solid vectors $u_1$, $u_2$, $v_1=v_1'$ and a direction $v_2$ in the strongly stable interval ending at $v_2'$ sufficiently close to $v_2'$. Note that the definition of $v_2'$ (and $v_1'$) disregards stable directions with difficulty smaller than $\alpha$ as present on the figure.
}
\label{fig:directions}}
\end{figure}
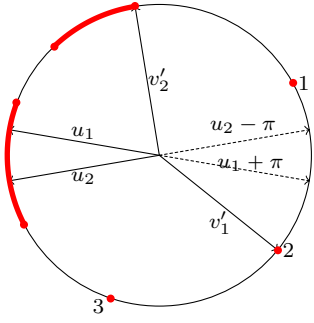
A direction $u\in S^1$ is called \emph{rational} if $\tan u\in\bbQ\cup\{\infty\}$.
\begin{lem}
\label{lem:directions}
There exists rational stable directions $\cS=\{u_1,u_2,v_1,v_2\}$ (see Figure~\ref{fig:directions}) with difficulty at least $\alpha$ such that
\begin{itemize}
\item The directions appear in couterclockwise order $u_1,u_2,v_1,v_2$.
\item No $u\in\cS$ is a semi-isolated stable direction.
\item $u_{3-i}$ belongs to the cone spanned by $v_i$ and $u_i$ for $i\in\{1,2\}$ i.e. the strictly smaller interval among $[v_i,u_i]$ and $[u_i,v_i]$ contains $u_{3-i}$. 
\item $0$ is contained in the interior of the convex envelope of $\cS$.
\item Either $u_2<v_1-\pi/2$ or $u_1>v_2+\pi/2$.
\item $(\bbH_{u_1}\cup\bbH_{u_2})\cap\bbZ^2$ is stable or, equivalently, $\nexists U\in\cU, U\subset\bbH_{u_1}\cup\bbH_{u_2}$.
\item the directions
\begin{equation*}
\begin{split}
u'=&(u_1+u_2)/2,\\
u'_1=&(3u_1+u_2)/4,\\
u'_2=&(u_1+3u_2)/4
\end{split}
\end{equation*}
are rational.
\end{itemize}
\end{lem}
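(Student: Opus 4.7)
The proof plan is to first pick $u_1,u_2$ in a common strongly stable interval, close to an interior direction $u^{\star}$, and then to use the inf--sup definition of $\alpha$ to place $v_1,v_2$ in the opposite half of the circle.

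By the hypothesis of infinitely many stable directions, the closed set of stable directions has non-empty interior, so it contains a non-trivial strongly stable interval $I$. Fix any $u^{\star}$ in the interior of $I$, and let $w$ denote its counterclockwise rotation by $\pi/2$. I would pick $u_1,u_2\in\mathrm{int}(I)$ both slightly counterclockwise of $u^{\star}$, with $u_1$ preceding $u_2$ in CCW order. To also arrange rationality of the three averages $(u_1+u_2)/2$, $(3u_1+u_2)/4$, $(u_1+3u_2)/4$, parametrize $u_2=u_1+4\phi$: the five relevant directions then equal $u_1+k\phi$ for $k\in\{0,1,2,3,4\}$, so by iterating the tangent addition formula it suffices that $\tan u_1,\tan\phi\in\bbQ$. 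Since rational directions are dense in $S^1$, such $u_1,\phi$ can be chosen with $u_1,u_2\in\mathrm{int}(I)$ arbitrarily close to $u^{\star}$.

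The crucial verification is that $\bbH_{u_1}\cup\bbH_{u_2}$ contains no $U\in\cU$. For each $U$, if $U\not\subset\overline{\bbH_{u^{\star}}}$ there is $y\in U$ with $\langle u^{\star},y\rangle>0$; for $u_1,u_2$ close enough to $u^{\star}$ we have $\langle u_i,y\rangle>0$, so $y\notin\bbH_{u_1}\cup\bbH_{u_2}$. Otherwise $U\subset\overline{\bbH_{u^{\star}}}$ and, since $u^{\star}$ is stable, $U\cap\partial\bbH_{u^{\star}}\ne\emptyset$. Here the strong stability of $u^{\star}$ plays a decisive role: I would show $U\cap\partial\bbH_{u^{\star}}$ must contain points with both signs of $\langle w,\cdot\rangle$. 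Indeed, if all had $\langle w,y\rangle<0$, rotating $u^{\star}$ slightly counterclockwise would push all of $U$ strictly into the rotated half-plane, contradicting the stability of the nearby directions (symmetrically on the other side). Selecting $y\in U\cap\partial\bbH_{u^{\star}}$ with $\langle w,y\rangle>0$, we get $\langle u_i,y\rangle\approx\varepsilon_i\langle w,y\rangle>0$ for the small rotation angles $\varepsilon_i$, so $y\notin\bbH_{u_1}\cup\bbH_{u_2}$.

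Finally, to pick $v_1,v_2$ I would invoke the identity $\alpha=\inf_C\sup_{u\in C}\alpha(u)$ over open semicircles $C$: every open semicircle contains a direction of difficulty at least $\alpha\ge1$, hence a stable direction of difficulty $\ge\alpha$. Applying this to two appropriate open semicircles situated roughly opposite to $u^{\star}$ (one on each side) yields candidates. If a candidate is semi-isolated, I would replace it by a rational direction in the adjacent strongly stable interval, close enough to remain inside the semicircle; strongly stable directions have difficulty $\infty\ge\alpha$ and are not semi-isolated, while rational ones are dense. Isolated stable candidates of difficulty $\ge\alpha$ are automatically rational and not semi-isolated. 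Then taking $u_1,u_2$ much closer to each other than to either $v_i$ forces the CCW order $u_1,u_2,v_1,v_2$ together with the shorter-arc condition on $u_{3-i}$; positioning the $v_i$'s sufficiently away from $-u^{\star}$ secures the $\pi/2$ gap and ensures $0$ lies in the interior of the convex envelope of $\cS$. The main obstacle is the stability analysis of $\bbH_{u_1}\cup\bbH_{u_2}$, which crucially requires the \emph{strong} (not mere) stability of $u^{\star}$.
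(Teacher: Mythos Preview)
Your argument for the stability of $(\bbH_{u_1}\cup\bbH_{u_2})\cap\bbZ^2$ is correct and is a clean alternative to the paper's approach. The paper instead first excises from the strongly stable interval the finite set $J$ of directions orthogonal to some element of some rule, and then argues that any $y\in U\cap(\bbH_{u_1}\setminus\bbH_{u_2})$ would force some direction in $[u_1,u_2]$ to lie in $J$. Your direct use of strong stability (forcing boundary points of both $\langle w,\cdot\rangle$--signs) achieves the same conclusion without introducing $J$, and your rationality argument via $u_2=u_1+4\phi$ with $\tan u_1,\tan\phi\in\bbQ$ is also fine.

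The genuine gap is in your treatment of $v_1,v_2$. These directions are not freely ``positionable'': they must be chosen from the set $\{u:\alpha(u)\ge\alpha\}$, and the inf--sup definition only tells you that every open semicircle meets this set, not where. Your claim that placing them ``sufficiently away from $-u^{\star}$'' secures the $\pi/2$ gap and the convex-hull condition is actually backwards. Writing the four directions in counterclockwise order as $u_1<u_2<v_1<v_2<u_1+2\pi$, the third bullet forces $v_1\in(u_2,u_1+\pi)$ and $v_2\in(u_2+\pi,u_1+2\pi)$; the convex-hull condition then reduces to $v_2-v_1<\pi$, and the $\pi/2$ condition to $v_1-u_2>\pi/2$ or $(u_1+2\pi)-v_2>\pi/2$. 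Both of these push $v_1,v_2$ \emph{towards} $-u^{\star}$, not away from it, and neither is automatic. The paper handles this by two additional ingredients you are missing: first it shrinks the interval so that $[u_1,u_2]+\pi$ contains no stable direction (this is where criticality enters, since $\cU$ has no pair of opposite strongly stable directions); then it takes $v_1'=\max\{v\in(u_2,u_1+\pi):\alpha(v)\ge\alpha\}$ and $v_2'=\min\{v\in(u_2-\pi,u_1):\alpha(v)\ge\alpha\}$, which makes $v_1',v_2'$ \emph{consecutive} directions of difficulty $\ge\alpha$. Since consecutive such directions are at most $\pi-\delta$ apart for some $\delta>0$ (again from the inf--sup), taking $u_2-u_1<\delta$ forces the $\pi/2$ condition, and the convex-hull condition follows. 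Without something equivalent to this max/min choice and the antipodal clearing step, your sketch does not verify the last two geometric bullets.
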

\begin{proof}
Since $\cU$ has an infinite number of stable directions and they form a finite union of closed intervals with rational endpoints~\cite[Theorem~1.10]{Bollobas15}, there exists a non-empty open interval $I'''$ of stable directions. Further note that the set $J$ of directions $u$ such that there exists a rule $U\in\cU$ and $x\in U$ with $\<x,u\>=0$ is finite, so one can find a non-trivial closed subinterval $I''\subset I'''$ which does not intersect $J$. The directions $u_1$ and $u_2$ will be chosen in $I''$, which clearly implies that they are strongly stable and thus with infinite difficulty.  Moreover, if there exists $U\in\cU$ with $U\subset \bbH_{u_1}\cup\bbH_{u_2}$, by stability of $u_2$, we have $U\cap(\bbH_{u_1}\setminus\bbH_{u_2})\neq\varnothing$, which contradicts $I''\cap J=\varnothing$.

Since $\cU$ is critical it does not have two opposite strongly stable directions, so there is no strongly stable direction in $I''+\pi$. If there are any (isolated or semi-isolated) stable directions in $I''+\pi$, we can further choose a non-trivial open subinterval $I'\subset I''$, for which this is not the case (there is a finite number of isolated and semi-isolated stable directions). Let $\pi>\delta>0$ be such that the angle between any two consecutive directions of difficulty at least $\alpha$ is at most $\pi-\delta$ (it is well defined by~\eqref{eq:def:alpha}). We then choose a non-trivial closed subinterval $I'\supset I=[u_1,u_2]$ with $u_1$ rational and $u'_1=(3u_1+u_2)/4$ rational and with $0<u_2-u_1<\delta<\pi$. It easily follows from the sum and difference formulas for the tangent function that $u'$, $u'_2$ and $u_2$ are also rational.

Let 
\begin{align*}
v_1'=&\max\{v\in(u_2,u_1+\pi)\colon\alpha(v)\ge\alpha\},\\
v_2'=&\min\{v\in(u_2-\pi,u_1)\colon\alpha(v)\ge\alpha\}.
\end{align*}
These both exist, since $I+\pi$ does not contain stable directions, both $(u_2,u_2+\pi)$ and $(u_1-\pi,u_1)$ contain directions with difficulty at least $\alpha$ by~\eqref{eq:def:alpha} and the set of such directions is closed. If $v_1'$ is not semi-isolated, we set $v_1=v_1'$ and similarly for $v_2$. Otherwise, we choose a rational strongly stable direction sufficiently close to $v_1'$ as $v_1$ and similarly for $v_2$. We claim that this choice satisfies all the desired conditions. Indeed, all directions in $\cS$ are stable non-semi-isolated rational with difficulty at least $\alpha$ and the last but one condition was already verified.

One does have that $u_1$ is in the cone spanned by $v_2$ and $u_2$, which is implied by $v_2\in(u_2-\pi,u_1)$ and similarly for $u_2$, so the third condition is also verified. If $v_2'-v'_1\ge \pi$, then there is an open half circle contained in $(v_1',v_2')$ with no direction of difficulty at least $\alpha$, which contradicts~\eqref{eq:def:alpha}, so $v_2-v_1<\pi$ and the same holds for $u_1-v_2$, $u_2-u_1$ and $v_1-u_2$ by the definition of $v_1'$ and $v_2'$, the fact that $v_1$ and $v_2$ are sufficiently close to them and the fact that $I$ was chosen smaller than $\pi$. Thus $0$ is in the convex envelope of $\cS$.

Finally, if one has both $v_1-u_2\le \pi/2$ and $u_1-v_2\le\pi/2$, then one obtains $v_2'-v_1'>\pi-\delta$, since $I$ is smaller than $\delta$. However, $v_1'$ and $v_2'$ are consecutive directions of difficulty at least $\alpha$, which contradicts the definition of $\delta$.
\end{proof}
\paragraph{Notation} For the rest of the paper we fix directions $\cS=\{u_1,u_2,v_1,v_2\}$ as in Lemma~\ref{lem:directions} and assume without loss of generality that $u_2<v_1-\pi/2$.

Let us fix large constants
\[1\ll C_1\ll C_2'\ll C_2\ll C_3\ll C_4'\ll C_4\ll C_5,\]
each of which can depend on previous ones as well as on $\cU$ and $\cS$. We will also use asymptotic notation whose constants can depend on $\cU$ and $\cS$, but not on $C_1$ or the other constants above. All asymptotic notation is with respect to $q\to0$, so we assume throughout that $q>0$ is sufficiently small.

For any two sets $K,\partial\subset\bbR^2$ we define $[K]_\partial=[(K\cup\partial)\cap\bbZ^2]\setminus\partial$.

Finally, we make the convention that throughout the article all distances, balls and diameters are Euclidean unless otherwise stated. We say that a set $X\subset\bbR^2$ is \emph{within distance} $\delta$ of a set $Y\subset\bbR^2$ if $d(x,Y)\le\delta$ for all $x\in X$\, where $d$ is the Euclidean distance.

\section{Droplet algorithm}
\label{sec:droplets}

In this section we define our main tool -- the droplet algorithm. It can be seen as a significant improvement on the $\alpha$-covering and $u$-iceberg algorithms~\cite[Definitions~6.6 and~6.22]{Bollobas14}, many of whose techniques we adapt to our setting.

We will work in an infinite domain $\Lambda$ defined as follows (see Figure~\ref{fig:Lambda}).
Fix some vector $a_0\in\bbR^2$ and let
\begin{equation}
\label{eq:def:partial:Lambda}
\begin{split}
\partial=&\bbH_{u'}\cup\bbH_{u'_1}(a_0)\cup\bbH_{u'_2}(a_0),\\
\Lambda=&\bbR^2\setminus\partial,
\end{split}
\end{equation}
where the directions $u'$, $u'_1$ and $u'_2$ are those defined in Lemma~\ref{lem:directions}.
In other words, $\Lambda$ is a cone with sides perpendicular to $u_1'$ and $u_2'$ cut along a line perpendicular to $u'$.  The reader is invited to simply think that $\partial$ is a half-plane directed by $u'$, which will not change the reasoning. 

\begin{figure}[t]
\floatbox[{\capbeside\thisfloatsetup{capbesideposition={right,top}}}]{figure}[\FBwidth]{
\begin{tikzpicture}[line cap=round,line join=round,x=0.25cm,y=0.25cm]
\clip(-5,-10) rectangle (3,10);
\fill[fill=black,fill opacity=0.25] (-5,-10.5) -- (0,-3) -- (0,3) -- (-5,10.5) -- (25,25) -- (25,-25) -- cycle;
\draw [->] (0,1) -- (-2,1);
\draw [->] (-2,6) -- (-3.66,4.89);
\draw [->] (-2,-6) -- (-3.66,-4.89);
\draw [domain=-5.0:5.0] plot(\x,{(-3--1.5*\x)/-1});
\draw [domain=-5.0:5.0] plot(\x,{(--3-1.5*\x)/-1});
\draw (0,-20) -- (0,20);
\begin{scriptsize}
\fill [color=black] (2,0) circle (1pt);
\draw[color=black] (2,1) node {$a_0$};
\draw[color=black] (-1,8) node {$\partial$};
\draw[color=black] (-4,0.61) node {$\Lambda$};
\draw[color=black] (-1,0) node {$u'$};
\draw[color=black] (-2,-4.5) node {$u'_1$};
\draw[color=black] (-2,4.5) node {$u'_2$};
\end{scriptsize}
\end{tikzpicture}}
{\caption{The open domain $\partial$ defined in~\eqref{eq:def:partial:Lambda} is shaded, while its complement $\Lambda$ is not. The lines are the boundaries of the three half-planes defining $\partial$. Note that if $a_0\notin\bbH_{u'}$, then $\Lambda$ becomes simply a cone. }
\label{fig:Lambda}}
\end{figure}
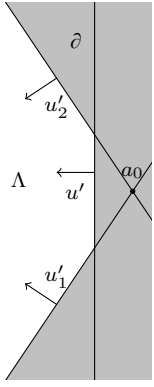

\subsection{Clusters and crumbs}
Let $\Gamma$ be the graph with vertex set $\bbZ^2$ but with $x\sim y$ if and only if $\|x-y\|\le C_2$. Let $\Gamma'$ be defined similarly with $C_2$ replaced by $C_2'$. Given a finite $K\subset \Lambda\cap\bbZ^2$, we say that $\kappa\subset K$ is a \emph{connected component} of $K$ in $\Gamma$ if the subgraph of $\Gamma$ induced by the vertex set $\kappa$ is connected and there do not exist vertices $x\in K\setminus\kappa$ and $y\in \kappa$ such that $x\sim y$ in $\Gamma$.

\paragraph{Crumbs}
For a given finite set $K\subset\Lambda\cap\bbZ^2$ of infections we would like to have a notion of a connected component being `big' or `small'. `Small' components will be dubbed `crumbs' and will play a negligible perturbative role in the bootstrap percolation process, by inducing only `very localised' growth and being `well isolated' from the rest of the infections. A sufficient condition for this, as identified in \cite{Bollobas14}, is that $|\kappa|<\alpha$. However, contrary to what was the case in \cite{Bollobas14}, we need the notion of `crumb' to be stable under the closure (with respect to the bootstrap percolation process), i.e. the closure of a `crumb' to still be a `crumb'. We thus identify as `crumb' any component, which is the closure of a set of size less than $\alpha$. Also taking into account the boundary, this leads us to the following notion.

\begin{defn}[Crumb]
\label{def:crumb}
Fix a finite set $K\subset \Lambda\cap\bbZ^2$ and let $\kappa$ be a connected component of $K$ in $\Gamma$. We say that $\kappa$ is a \emph{crumb} for $K$ if the following conditions hold.
\begin{itemize}
    \item For all $x\in\kappa$ we have $d(x,\partial)>C_2$.
    \item There exists a set $P_\kappa\subset \bbZ^2$ such that $[P_\kappa]\supset \kappa$ and $|P_\kappa|=\alpha-1$.
\end{itemize}
\end{defn}
\paragraph{First properties of crumbs}
It follows from the definition that a crumb $\kappa$ for $K$ is at distance more than $C_2$ from $\partial \cup (K\setminus\kappa)$. Moreover, the closure of a crumb is within bounded distance from the crumb, as we shall see in Corollary~\ref{cor:crumbs:growth} (see Figure \ref{fig:crumbs}). Also, crumbs have diameters much smaller than $C_3$, as we shall see in Corollary~\ref{cor:crumbs:growth}. The proofs of this corollary and Observation~\ref{obs:C1:well:defined}, which it follows from, are both independent of the rest of the argument and are only postponed for convenience. Nevertheless, we allow ourselves to use these (easy) results ahead of their proofs.

These properties justify and quantify the intuition that crumbs are `small', that they only grow `locally', and it is clear that (if we disregard the boundary) the closure of a crumb is a crumb.
\paragraph{Modified crumbs}
Unfortunately, if $K$ is the union of two crumbs at distance slightly larger than $C_2$, it is not necessarily true that $[K]$ is still composed of crumbs (recall that, albeit locally, crumbs can grow under the bootstrap percolation process), which can be disastrous. This is the reason for introducing `modified crumbs' with $C_2'\ll C_2$, so that in the scenario above all connected components of $[K]$ in $\Gamma'$ are `modified crumbs' (there may now be more than two of them). \begin{defn}[Modified crumb]
We define a \emph{modified crumb} by replacing in Definition~\ref{def:crumb} $\Gamma$ by $\Gamma'$ and $C_2$ by $C_2'$.
\end{defn}
In the sequel we will encounter more `modified' notions and constants (like $C_2'$). These will be applied to $K$ equal to the closure $[K']_\partial$ of some $K'$, which is our initial set of infections. Our ultimate goal is to ensure that simply using these modified notions based on (much smaller) modified constants will compensate the closure operation.

\paragraph{Clusters}
We next consider connected components which are not crumbs. Since they can be very large (particularly so if we are working with the closure of a set), we cut them up into (possibly overlapping) pieces termed `clusters', which have bounded size. Roughly speaking, a `cluster' is any `big, but not too big' connected set of infections.
\begin{defn}[Cluster]
\label{def:cluster}
Fix a finite set $K\subset\Lambda\cap\bbZ^2$. Let $\kappa$ be a connected component of $K$ in $\Gamma$ which is not a crumb. We say that a subset $C$ of $\kappa$ is a \emph{cluster} for $K$ if the following conditions hold.
\begin{itemize}
    \item $\diam (C)\le C_3$.
    \item $C$ is connected in $\Gamma$ (i.e. $C$ is a connected component of $C$ in $\Gamma$).
    \item Either $C=\kappa$ or for all $x\in \kappa\setminus C$ and $y\in C$ such that $x\sim y$ in $\Gamma$ we have $\diam(C\cup\{x\})>C_3$.
\end{itemize}
A cluster is called \emph{boundary cluster} if it is at distance at most $C_2$ from $\partial$. For a cluster $C$ we denote by $Q(C)$ the smallest open quadrilateral with sides perpendicular to $\cS$ containing the set $\{x\in\bbR^2 \colon d(x,C)< C_4\}$. 

We similarly define \emph{modified cluster} and \emph{modified boundary cluster} by replacing $\Gamma$ by $\Gamma'$ and $C_2$ by $C_2'$. For a cluster or modified cluster $C$ we denote by $Q'(C)$ the smallest open quadrilateral with sides perpendicular to $\cS$ containing the set $\{x\in\bbR^2 \colon d(x,C)< C_4'\}$.
\end{defn}

\paragraph{Identifying clusters and crumbs}
In order to identify the clusters and crumbs of $K$, one may proceed as follows. Determine the connected components of $K$ in $\Gamma$ and consider each of them separately. For a given component $\kappa$ first check if it is at distance at most $C_2$ from $\partial$. If so, then it is not a crumb and will give rise to clusters. If not, then check if $\kappa$ is the closure of at most $\alpha-1$ sites. If this second verification succeeds, then $\kappa$ is determined to be a crumb and, as mentioned above, it must have diameter much smaller than $C_3$.

If $\kappa$ is thus determined not to be a crumb, we proceed to identify its clusters. If $\diam(\kappa)\le C_3$, then there is a single cluster --- $\kappa$ --- and we are done. If not, we construct the clusters of $\kappa$ by the following algorithm. Initialise the set $C=\varnothing$. If there exists $y\in\kappa\setminus C$ such that $C\cup\{y\}$ is connected in $\Gamma$ and has diameter at most $C_3$, then replace $C$ by $C\cup\{y\}$ and repeat. If several such $y$ exist, then we do this for each possible $y$ in parallel. The clusters containing $x$ are all possible sets $C$ obtained via this algorithm to which no $y$ can be added.

In particular, this provides us with a partition of $K$ into well separated crumbs, single clusters equal to their corresponding connected component and sets of overlapping clusters whose union is a connected component of diameter larger than $C_3$. 

\paragraph{First properties of clusters} Following the algorithm above, we obtain some basic properties of clusters.
\begin{obs}\label{obs:clusters:sites}
Let $C$ be a non-boundary cluster or non-boundary modified cluster for a finite $K\subset\Lambda\cap\bbZ^2$. Then $|C|\ge \alpha$.
\end{obs}
\begin{proof}
Let $\kappa$ be the connected component of $K$ in $\Gamma$ containing $C$. If $\diam(\kappa)\le C_3$, then $C=\kappa$ and $\kappa$ would be a crumb if we had $|\kappa|\le \alpha-1$, by taking $P_\kappa\supset\kappa$. If, on the contrary, $\diam(\kappa)>C_3$, then $\diam(C)\ge C_3-C_2$ (by the third condition of Definition \ref{def:cluster}) and we can choose $C_3$ large enough to have $\frac{C_3-C_2}{C_2}\ge\alpha$.
\end{proof}
Finally, for every cluster $C$ we have $\diam(C)\le C_3$, so $C$ intersects at most $2^{5C^2_3}$ other clusters. Also, $Q(C)\supset[C]$, since $Q(C)\cap\bbZ^2\supset C$ is stable. Furthermore, $\diam(Q(C))=\Theta(C_4)$, as $\diam(C)\le C_3$. Analogous statements hold for modified clusters.

\subsection{Distorted Young diagrams}

\begin{figure}[t]
\floatbox[{\capbeside\thisfloatsetup{capbesideposition={right,top}}}]{figure}[\FBwidth]{
\begin{tikzpicture}[line cap=round,line join=round,,x=0.5cm,y=0.5cm]
\clip(-6,-7.5) rectangle (12.5,14.5);
\fill[fill=black,fill opacity=0.25] (12.15,14.19) -- (-3.57,11.57) -- (-4,10) -- (-3.73,9) -- (-4,8) -- (-3.82,7.33) -- (-5,3) -- (-3.68,-1.83) -- (-4,-3) -- (-3.39,-5.24) -- cycle;
\draw [->] (-1,12) -- (-1.16,12.99);
\draw [->] (4,4) -- (4.78,3.38);
\draw (12.15,14.19)-- (-3.57,11.57);
\draw (-3.57,11.57)-- (-4,10);
\draw (-4,10)-- (-3.73,9);
\draw (-3.73,9)-- (-4,8);
\draw (-4,8)-- (-3.82,7.33);
\draw (-3.82,7.33)-- (-5,3);
\draw (-5,3)-- (-3.68,-1.83);
\draw (-3.68,-1.83)-- (-4,-3);
\draw (-4,-3)-- (-3.39,-5.24);
\draw (-3.39,-5.24)-- (12.15,14.19);
\draw [->] (-4.37,5.31) -- (-5.33,5.57);
\draw [->] (-4.24,0.23) -- (-5.21,-0.03);
\draw (-3.57,11.57)-- (-5.77,3.5);
\draw (-5.77,3.5)-- (-3.39,-5.24);
\draw [very thick] (-3,13.67)-- (-4,10);
\draw [very thick] (-4,10)-- (-3.73,9);
\draw [very thick] (-3.73,9)-- (-4,8);
\draw [very thick] (-4,8)-- (-3.82,7.33);
\draw [very thick] (-3.82,7.33)-- (-5,3);
\draw [very thick] (-5,3)-- (-3.68,-1.83);
\draw [very thick] (-3.68,-1.83)-- (-4,-3);
\draw [very thick] (-4,-3)-- (-3,-6.67);
\draw [very thick] (-3,-6.67)-- (-3,13.67);
\draw (-3,-10) -- (-3,20);
\draw [->] (-3,-7.18) -- (-4,-7.18);
\fill [color=black] (-3.39,-5.24) circle (1.5pt);
\draw[color=black] (-4,-5.24) node {$x_1$};
\fill [color=black] (-3.68,-1.83) circle (1.5pt);
\draw[color=black] (-3.3,-1.5) node {$x_2$};
\fill [color=black] (-3.82,7.33) circle (1.5pt);
\draw[color=black] (-3.4,7) node {$x_3$};
\fill [color=black] (-3.73,9) circle (1.5pt);
\draw[color=black] (-3.4,9.4) node {$x_4$};
\fill [color=black] (-3.57,11.57) circle (1.5pt);
\draw[color=black] (-4,11.57) node {$x_5$};
\fill [color=black] (12.15,14.19) circle (1.5pt);
\draw[color=black] (12.25,13.5) node {$x$};
\fill [color=black] (-5,3) circle (1.5pt);
\draw[color=black] (-4.5,3) node {$y_2$};
\fill [color=black] (-4,-3) circle (1.5pt);
\draw[color=black] (-3.5,-3) node {$y_1$};
\fill [color=black] (-4,8) circle (1.5pt);
\draw[color=black] (-3.5,8) node {$y_3$};
\fill [color=black] (-4,10) circle (1.5pt);
\draw[color=black] (-3.5,10) node {$y_4$};
\fill [color=black] (-5.77,3.5) circle (1.5pt);
\draw[color=black] (-5.4,3.5) node {$y$};
\draw[color=black] (-0.5,13) node {$v_2$};
\draw[color=black] (5,3.6) node {$v_1$};
\draw[color=black] (-5,5.75) node {$u_1$};
\draw[color=black] (-5,0.25) node {$u_2$};
\draw[color=black] (-3.5,-6.85) node {$u'$};
\draw[color=black] (2,7) node {$D$};
\draw[color=black] (5,0.5) node {$\partial$};
\end{tikzpicture}}{
\caption{The shaded region $D$ is a distorted Young diagram (DYD) as in Definition~\ref{def:DYD}. The larger quadrilateral with vertices $x$, $x_1$, $y$ and $x_5$ is $Q(D)$. Note that $Q(D)$ can degenerate into a triangle, but we call it a quadrilateral nevertheless. On the figure $|D|$ is the length of the $v_1$ side, but this is not always the case. The thickened region is the cut distorted Young diagram (CDYD) $C(D)$ of $D$. The vertical line is the boundary between $\Lambda$ on its left and $\partial$ on its right.}
\label{fig:DYD}}
\end{figure}
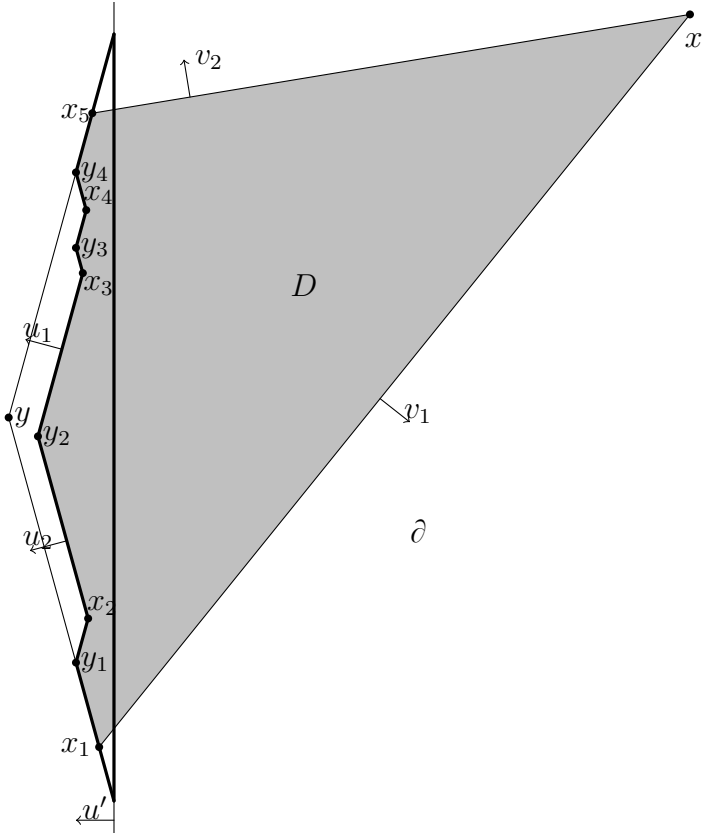

We now define the shape that our `droplets' will have, which resembles Young diagrams\footnote{For the 3-rule model alluded to in Section \ref{sec:sketch} stable sets consist precisely of Young diagrams and the directions $\cS$ provided by Lemma~\ref{lem:directions} can be arbitrarily close to the four axis directions, yielding Young diagrams.}. The following definitions are illustrated in Figure~\ref{fig:DYD}.
\begin{defn}[DYD]
\label{def:DYD}
A \emph{distorted Young diagram} (DYD) is a subset of $\bbR^2$ of the form
\begin{equation}
\label{eq:def:DYD}
(\bbH_{v_1}(x)\cap\bbH_{v_2}(x))\cap \bigcap_{i\in I}(\bbH_{u_1}(x_i)\cup\bbH_{u_2}(x_i))
\end{equation}
for a finite set $I$, some set $X=\{x_i \colon i\in I\}$ of vectors $x_i\in\bbR^2$ and $x\in\bbR^2$. The vectors $x_i$ and $x$ are uniquely defined up to redundancy (and up to the convention that all $x_i$ are on the topological boundary of the DYD). Alternatively, a DYD can also be defined by
\begin{equation}
\label{eq:def:DYD:Y}
(\bbH_{v_1}(x)\cap\bbH_{v_2}(x))\cap \bigcup_{i\in I}(\bbH_{u_1}(y_i)\cap\bbH_{u_2}(y_i)),
\end{equation}
where $y_i$ are the convex corners of the diagram rather than the concave ones.
\end{defn}
For any DYD $D$ we denote by $y$ the vector such that
\[\<y,u_j\>=\sup_{a\in D}\<a,u_j\>=\max_{i\in I}\<y_i,u_j\>\]
for $j\in\{1,2\}$. We further denote
\[Q(D)=\bbH_{u_1}(y)\cap\bbH_{u_2}(y)\cap\bbH_{v_1}(x)\cap\bbH_{v_2}(x),\]
i.e. the minimal open quadrilateral containing $D$ with sides directed by $\cS$. In these terms, for any cluster or modified cluster $C$ we have that $Q(C)$ and $Q'(C)$ are DYD, $Q(Q(C))=Q(C)$ and $Q(Q'(C))=Q'(C)$.

\begin{defn}[CDYD]
\label{def:CDYD}
A \emph{cut distorted Young diagram} (CDYD) is a subset of $\bbR^2$ of the form 
\[\Lambda \cap (\bbH_{u_1}(y)\cap\bbH_{u_2}(y))\cap \bigcap_{i\in I}(\bbH_{u_1}(x_i)\cup\bbH_{u_2}(x_i))\]
for a finite set $I$ and some vectors $x_i\in\bbR^2$ and $y\in\Lambda$. Alternatively, one can write 
\[\Lambda \cap \bigcup_{i\in I}(\bbH_{u_1}(y_i)\cap\bbH_{u_2}(y_i)),\]
where $y_i\in\Lambda$ are the convex corners.
\end{defn}

For a DYD, $D$, we denote by $C(D)$ the CDYD defined by the same $x_i$ and $y$ or the same $y_i$. We extend the notation $C(D)$ to CDYD by setting $C(D)=D$ if $D$ is a CDYD. Note that by Lemma~\ref{lem:directions} all DYD and CDYD are stable for the bootstrap percolation dynamics (restricted to $\Lambda$). Also pay attention to the fact that CDYD are not necessarily connected, contrary to DYD.

\begin{defn}[Size]
For a DYD $D$ we set $\pi(D)=\{x\in\bbR\colon\exists\, y\in D,\<y,v_1+\pi/2\>=x\}$ to be its \emph{projection} (parallel to $v_1$) and $|D|=\sup\pi(D)-\inf\pi(D)$ to be its \emph{size} -- the length of the projection. For a CDYD $D$ we denote its \emph{size} $\diam(D)/C_1$ by $|D|$.
\end{defn}

Note that if $D$ is a DYD, then $|D|=|Q(D)|$ by Lemma~\ref{lem:directions} and the assumption we made that $u_2 < v_1 -\pi/2$. Furthermore, for all DYD $\diam(D)=\Theta(|D|)$ again by Lemma~\ref{lem:directions} with constants depending only on $\cS$. One should be careful with the meaning of size for disconnected CDYD, but it will not cause problems, as all CDYD arising in our forthcoming algorithm are connected.

\begin{obs}
\label{obs:entropy}
Note that for any $d\ge 1$ the number of discretised DYD and CDYD (i.e. intersections of a DYD or CDYD with $\bbZ^2$) containing a fixed point $a\in\bbR^2$ of diameter at most $d$ is less than $c^d$ for some constant $c$ depending only on $\cS$. 
\end{obs}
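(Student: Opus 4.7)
The plan is to bound the count via a lattice-path encoding of the boundary, which will have total length $O(d)$ since $\diam D \le d$. The argument naturally breaks into three steps: identifying the boundary structure, canonicalizing so that all relevant vertices lie on a fixed sublattice, and counting lattice walks.

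First, I would observe that a DYD $D$ has topological boundary consisting of two straight sides perpendicular to $v_1$ and $v_2$ meeting at $x$, together with a staircase of alternating segments perpendicular to $u_1$ and $u_2$ passing through the concave corners $x_i$ (equivalently, the convex corners $y_i$). By Lemma~\ref{lem:directions}, the four directions in $\cS$ are rational, so one can fix a sublattice $L\subset\bbR^2$ whose mesh depends only on $\cS$ and such that any segment perpendicular to a direction in $\cS$ with endpoints on $L$ has length that is a bounded multiple of a common unit. Since $\diam(D)=\Theta(|D|)$ with constants depending only on $\cS$ by Lemma~\ref{lem:directions}, the total perimeter of $D$ is $O(d)$.

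Next, I would pass to a canonical representative: for any discretised DYD $D\cap\bbZ^2$, take $\tilde D$ to be the smallest (with respect to inclusion) DYD satisfying $\tilde D\cap\bbZ^2=D\cap\bbZ^2$; rationality of $\cS$ ensures that all the vertices $x,y_i,x_i$ of $\tilde D$ lie on $L$, so the map $D\cap\bbZ^2\mapsto\tilde D$ is an injection into polygons on $L$. To count such $\tilde D$ containing $a$ with $\diam\tilde D\le d$: a canonically chosen corner (say, the vertex of $Q(\tilde D)$ minimising a fixed lexicographic order) lies on $L$ within Euclidean distance $O(d)$ of $a$, yielding at most $O(d^2)$ possible positions. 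Starting at this corner and traversing $\partial \tilde D$ step by step in unit $L$-increments, one obtains a closed walk of length $O(d)$ with only $O(1)$ direction choices at each step, so the number of admissible walks is at most $c_0^d$ for some $c_0=c_0(\cS)$. Combining, the number of discretised DYDs is at most $O(d^2)\,c_0^d\le c^d$ for a suitable constant $c>c_0$ and all $d\ge 1$.

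The CDYD case is analogous: the boundary now includes at most three additional straight segments along the lines bounding $\partial$, perpendicular to the rational directions $u',u'_1,u'_2$, which fit into the same framework by adding $O(1)$ extra step directions and contribute $O(d)$ extra steps to the walk, preserving the $c^d$ bound. The only step requiring any care is the canonicalisation, namely verifying that the minimal DYD with a given $\bbZ^2$-intersection is well-defined and has its vertices on $L$; this is routine but is where the rationality hypothesis of Lemma~\ref{lem:directions} is essential.
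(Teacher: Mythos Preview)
Your proposal is correct and follows essentially the same approach as the paper: encode the boundary as a lattice path of length $O(d)$ on a discrete lattice determined by the rational directions, then count such paths. The paper streamlines this slightly by observing that the rugged $u_1$/$u_2$ edge alone determines the discretised DYD or CDYD, so only that portion needs to be encoded (on the lattice $\{x:\exists\,x_1,x_2\in\bbZ^2,\ \<x,u_1\>=\<x_1,u_1\>,\ \<x,u_2\>=\<x_2,u_2\>\}$, with endpoints on analogous lattices), whereas you encode the full perimeter and are more explicit about the canonicalisation step the paper leaves implicit.
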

\begin{proof}
Note that a DYD or CDYD is uniquely determined by its rugged edge formed by its $u_1$ and $u_2$-sides. However, this edge injectively defines an oriented percolation path with directions perpendicular to $u_1$ and $u_2$ on the lattice 
\[\{x\in\bbR^2\colon\exists x_1,x_2\in\bbZ^2,\<x,u_1\>=\<x_1,u_1\>,\<x,u_2\>=\<x_2,u_2\>\}\]
(except its endpoints, which lie on similar lattices). Since the graph-length of this path is bounded by $O(d)$ and its endpoints are within distance $d$ from $a$, the result follows.
\end{proof}

\subsection{Span}\label{sec:span}
\begin{figure}[t]
\resizebox{0.9\textwidth}{!}{
\definecolor{uuuuuu}{rgb}{0,0,0}
\definecolor{ffqqqq}{rgb}{1,0,0}
\definecolor{qqffqq}{rgb}{0,1,0}
\begin{tikzpicture}[line cap=round,line join=round,x=0.9cm,y=0.9cm]
\clip(-9.2,-6.1) rectangle (4.8,7.7);
\fill[color=qqffqq,fill=qqffqq,fill opacity=0.25] (2,3) -- (-3.92,7.44) -- (-4.38,6.83) -- (-3,5) -- (-3.75,4) -- (-3,3) -- (-5,0.33) -- (-4,-1) -- (-4.88,-2.17) -- cycle;
\fill[fill=black,pattern=horizontal lines] (-3.92,7.44) -- (4.71,0.96) -- (-4.37,-5.85) -- (-4.88,-5.17) -- (-4,-4) -- (-5.98,-1.36) -- (-4.86,0.14) -- (-5,0.33) -- (-3.42,2.44) -- (-3.92,3.11) -- (-3.5,3.67) -- (-3.75,4) -- (-3.53,4.3) -- (-3.78,4.63) -- (-3.25,5.33) -- (-4.38,6.83) -- cycle;
\draw [color=qqffqq] (2,3)-- (-3.92,7.44);
\draw [color=qqffqq] (-3.92,7.44)-- (-4.38,6.83);
\draw [color=qqffqq] (-4.38,6.83)-- (-3,5);
\draw [color=qqffqq] (-3,5)-- (-3.75,4);
\draw [color=qqffqq] (-3.75,4)-- (-3,3);
\draw [color=qqffqq] (-3,3)-- (-5,0.33);
\draw [color=qqffqq] (-5,0.33)-- (-4,-1);
\draw [color=qqffqq] (-4,-1)-- (-4.88,-2.17);
\draw [color=qqffqq] (-4.88,-2.17)-- (2,3);
\draw [line width=2pt,color=ffqqqq] (2.91,-0.39)-- (-4.37,-5.85);
\draw [line width=2pt,color=ffqqqq] (-4.37,-5.85)-- (-4.88,-5.17);
\draw [line width=2pt,color=ffqqqq] (-4.88,-5.17)-- (-4,-4);
\draw [line width=2pt,color=ffqqqq] (-4,-4)-- (-5.98,-1.36);
\draw [line width=2pt,color=ffqqqq] (-5.98,-1.36)-- (-3.28,2.25);
\draw [line width=2pt,color=ffqqqq] (-3.28,2.25)-- (-3.92,3.11);
\draw [line width=2pt,color=ffqqqq] (-3.92,3.11)-- (-3.28,3.97);
\draw [line width=2pt,color=ffqqqq] (-3.28,3.97)-- (-3.78,4.63);
\draw [line width=2pt,color=ffqqqq] (-3.78,4.63)-- (2.91,-0.39);
\draw (-3.92,7.44)-- (4.71,0.96);
\draw (4.71,0.96)-- (-4.37,-5.85);
\draw (-4.37,-5.85)-- (-4.88,-5.17);
\draw (-4.88,-5.17)-- (-4,-4);
\draw (-4,-4)-- (-5.98,-1.36);
\draw (-5.98,-1.36)-- (-4.86,0.14);
\draw (-4.86,0.14)-- (-5,0.33);
\draw (-5,0.33)-- (-3.42,2.44);
\draw (-3.42,2.44)-- (-3.92,3.11);
\draw (-3.92,3.11)-- (-3.5,3.67);
\draw (-3.5,3.67)-- (-3.75,4);
\draw (-3.75,4)-- (-3.53,4.3);
\draw (-3.53,4.3)-- (-3.78,4.63);
\draw (-3.78,4.63)-- (-3.25,5.33);
\draw (-3.25,5.33)-- (-4.38,6.83);
\draw (-4.38,6.83)-- (-3.92,7.44);
\draw [dash pattern=on 4pt off 4pt,color=uuuuuu] (-9.13,0.5)-- (-3.92,7.44);
\draw [dash pattern=on 4pt off 4pt,color=uuuuuu] (-9.13,0.5)-- (-4.37,-5.85);
\draw [dash pattern=on 4pt off 4pt,color=qqffqq] (-8,2)-- (-3.92,7.44);
\draw [dash pattern=on 4pt off 4pt,color=qqffqq] (-8,2)-- (-4.88,-2.17);
\draw [line width=2pt,dash pattern=on 4pt off 4pt,color=ffqqqq] (-8,-1)-- (-4.37,-5.85);
\draw [line width=2pt,dash pattern=on 4pt off 4pt,color=ffqqqq] (-8,-1)-- (-3.78,4.63);
\begin{scriptsize}
\fill [color=qqffqq] (-4,-1) circle (1.5pt);
\draw (-3.87,-0.75) node {$x_1^1$};
\fill [color=qqffqq] (-3,3) circle (1.5pt);
\draw (-2.7,3) node {$x_2^1$};
\fill [color=qqffqq] (-3,5) circle (1.5pt);
\draw (-2.8,5) node {$x_3^1$};
\fill [color=qqffqq] (-3.92,7.44) circle (1.5pt);
\draw (-4.7,7.44) node {$x_4^1=x_8$};
\fill [color=qqffqq] (-4.88,-2.17) circle (1.5pt);
\draw (-4.5,-2.17) node {$y_1^1$};
\fill [color=qqffqq] (-5,0.33) circle (1.5pt);
\draw (-5.7,0.5) node {$y_2^1=y_3$};
\fill [color=qqffqq] (-3.75,4) circle (1.5pt);
\draw (-4.45,4) node {$y_3^1=y_5$};
\fill [color=qqffqq] (-4.38,6.83) circle (1.5pt);
\draw (-5.1,6.83) node {$y_4^1=y_7$};
\fill [color=qqffqq] (-8,2) circle (1.5pt);
\draw (-7.7,2) node {$y^1$};
\fill [color=qqffqq] (2,3) circle (1.5pt);
\draw (2.13,3.24) node {$x^1$};
\fill [color=ffqqqq] (-3.78,4.63) circle (1.5pt);
\draw (-4.5,4.63) node {$y_4^2=y_6$};
\fill [color=ffqqqq] (-3.92,3.11) circle (1.5pt);
\draw (-4.45,2.9) node {$y_3^2=y_4$};
\fill [color=ffqqqq] (-5.98,-1.36) circle (1.5pt);
\draw (-6.7,-1.36) node {$y_2^2=y_2$};
\fill [color=ffqqqq] (-4.88,-5.17) circle (1.5pt);
\draw (-5.6,-5.17) node {$y_1^2=y_1$};
\fill [color=ffqqqq] (-3.28,3.97) circle (1.5pt);
\draw (-3.1,3.8) node {$x_4^2$};
\fill [color=ffqqqq] (-3.28,2.25) circle (1.5pt);
\draw (-3,2.25) node {$x_3^2$};
\fill [color=ffqqqq] (-4,-4) circle (1.5pt);
\draw (-4.8,-4) node {$x_2^2=x_2$};
\fill [color=ffqqqq] (-4.37,-5.85) circle (1.5pt);
\draw (-3.5,-5.85) node {$x_1^2=x_1$};
\fill [color=ffqqqq] (-8,-1) circle (1.5pt);
\draw (-7.6,-1) node {$y^2$};
\fill [color=ffqqqq] (2.91,-0.39) circle (1.5pt);
\draw (3,-0.5) node {$x^2$};
\fill (4.71,0.96) circle (1.5pt);
\draw (4.7,1.21) node {$x$};
\fill (-9.13,0.5) circle (1.5pt);
\draw (-8.9,0.5) node {$y$};

\fill (-3.92,7.44) circle (1.5pt);
\fill (-3.25,5.33) circle (1.5pt);
\draw (-3.6,5.33) node {$x_7$};
\fill (-3.53,4.3) circle (1.5pt);
\draw (-3.8,4.3) node {$x_6$};
\fill (-3.5,3.67) circle (1.5pt);
\draw (-3.9,3.67) node {$x_5$};
\fill (-3.42,2.44) circle (1.5pt);
\draw (-3.8,2.44) node {$x_4$};
\fill (-4.86,0.14) circle (1.5pt);
\draw (-5.2,0.14) node {$x_3$};
\fill (-4,-4) circle (1.5pt);
\fill (-4.37,-5.85) circle (1.5pt);

\fill (-4.38,6.83) circle (1.5pt);
\fill (-3.78,4.63) circle (1.5pt);
\fill (-3.75,4) circle (1.5pt);
\fill (-3.92,3.11) circle (1.5pt);
\fill (-5,0.33) circle (1.5pt);
\fill (-5.98,-1.36) circle (1.5pt);
\fill (-4.88,-5.17) circle (1.5pt);

\draw (-1,4) node {$D_1$};
\draw (-1,-1) node {$D_2$};
\draw (2.5,1) node {$D_1\vee D_2$};
\end{scriptsize}
\end{tikzpicture}}
\caption{The shaded region $D_1$ and thickened region $D_2$ are DYD.  Their respective quadrilaterals $Q(D_i)$ are completed by dashed lines. Their span $D_1\vee D_2$ is hatched and its quadrilateral $Q(D_1\vee D_2)$ is also completed by dashed lines.}
\label{fig:span}
\end{figure}
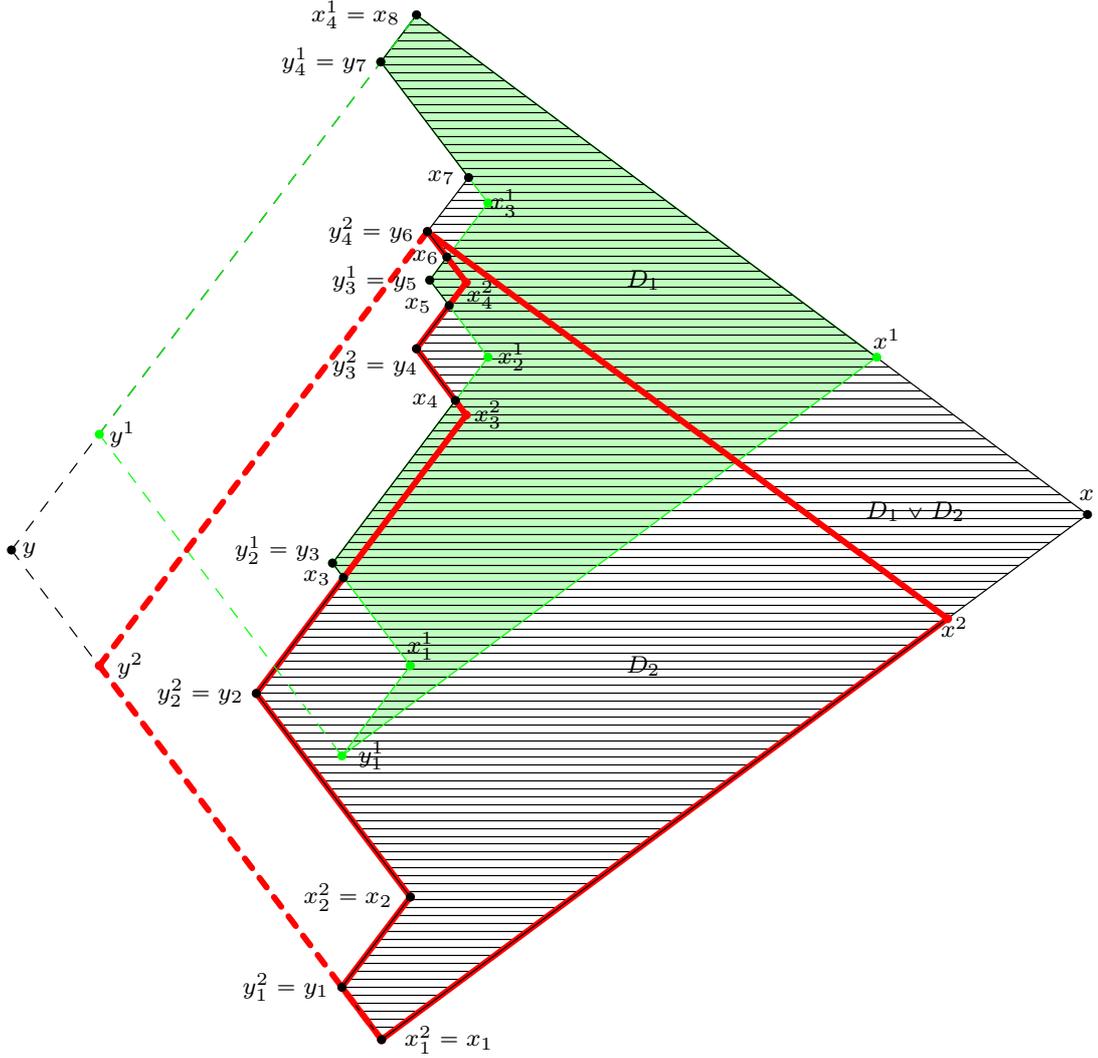

We next introduce a procedure of merging DYD and CDYD. This will be used only for couples of intersecting ones, but can be defined regardless of whether they intersect. The operation is illustrated in Figure~\ref{fig:span}.
\begin{lem}
\label{lem:span}
For any two DYD, $D_1$ and $D_2$, the minimal DYD containing $D_1\cup D_2$ is well defined. We denote it by $D_1\vee D_2$ and call it their \emph{span}. The operation $\vee$ is associative\footnote{Associativity was referred to as commutativity by previous authors~\cite{Bollobas15}.} and commutative.
\end{lem}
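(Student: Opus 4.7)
The plan is to exhibit $D_1\vee D_2$ by a direct canonical construction in the convex-corner formulation of Definition~\ref{def:DYD}, verify that the result is the smallest DYD containing $D_1\cup D_2$, and then deduce commutativity (immediate) and associativity (formal) from this minimality characterisation. I would assume throughout that both DYDs are bounded, which is the case of interest in the paper.

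To build the candidate, I would first define $x^*\in\bbR^2$ as the unique vector satisfying $\<v_j,x^*\>=\sup_{a\in D_1\cup D_2}\<v_j,a\>$ for $j=1,2$; this supremum is finite by boundedness, and $x^*$ is uniquely determined because $v_1,v_2$ are linearly independent by Lemma~\ref{lem:directions}. Letting $Y_j$ denote the finite set of convex corners of $D_j$, I would set $Y^*\subseteq Y_1\cup Y_2$ to be the Pareto-maximal corners in the partial order on $(u_1,u_2)$-coordinates, i.e.\ those $y$ not weakly dominated by any other element of $Y_1\cup Y_2$. I would then define
\[D_1\vee D_2:=\bigl(\bbH_{v_1}(x^*)\cap\bbH_{v_2}(x^*)\bigr)\cap\bigcup_{y\in Y^*}\bigl(\bbH_{u_1}(y)\cap\bbH_{u_2}(y)\bigr),\]
which is a DYD by construction. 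To check $D_1\cup D_2\subseteq D_1\vee D_2$ I would fix $a\in D_j$: openness of $D_j$ combined with the supremum definition of $x^*$ gives $\<v_k,a\><\<v_k,x^*\>$, placing $a$ in the outer wedge; and $a$ lies in $\bbH_{u_1}(y_i^j)\cap\bbH_{u_2}(y_i^j)$ for some convex corner $y_i^j$ of $D_j$, which is weakly dominated by some $y\in Y^*$, so $\bbH_{u_1}(y_i^j)\cap\bbH_{u_2}(y_i^j)\subseteq\bbH_{u_1}(y)\cap\bbH_{u_2}(y)$, placing $a$ in the inner union.

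For minimality, I would take any DYD $D$ containing $D_1\cup D_2$, written with outer parameter $x$ and convex corners $Y$. The definition of $x^*$ as a supremum forces $\<v_j,x\>\ge\<v_j,x^*\>$, so the outer wedge of $D$ contains that of $D_1\vee D_2$. Each $y^*\in Y^*$ is a convex corner of some $D_j$, so $y^*\in\overline{D_j}\subseteq\overline{D}$; this in turn forces $y^*$ to be weakly dominated by some $y\in Y$ in the $(u_1,u_2)$-coordinates, whence $\bbH_{u_1}(y^*)\cap\bbH_{u_2}(y^*)\subseteq\bbH_{u_1}(y)\cap\bbH_{u_2}(y)$ and the inner region of $D$ contains that of $D_1\vee D_2$. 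Hence $D_1\vee D_2\subseteq D$, establishing well-definedness.

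Commutativity is immediate from the symmetric role of $D_1$ and $D_2$ in the construction. For associativity, I would note that a DYD contains $(D_1\vee D_2)\cup D_3$ if and only if it contains $D_1\cup D_2\cup D_3$: one direction is trivial from $D_1\cup D_2\subseteq D_1\vee D_2$, and the other uses the minimality just established to pass from containing $D_1\cup D_2$ to containing $D_1\vee D_2$. Hence both $(D_1\vee D_2)\vee D_3$ and $D_1\vee(D_2\vee D_3)$ coincide with the unique minimal DYD containing $D_1\cup D_2\cup D_3$, and they agree. The main technical obstacle I expect is the bookkeeping around the two equivalent representations of a DYD (concave versus convex corners) and the distinction between strict and weak dominance when arguing that $Y^*$ is precisely the set of convex corners of the span; neither should present real conceptual difficulty thanks to the linear independence of $u_1,u_2$ guaranteed by Lemma~\ref{lem:directions}.
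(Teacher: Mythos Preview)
Your proposal is correct and follows essentially the same approach as the paper: both construct the span explicitly in the convex-corner formulation by taking the Pareto-maximal elements of $Y_1\cup Y_2$ together with the outer parameter $x^*$ determined by the larger of the two $v_j$-coordinates, then prove minimality via the observation that each such corner lies in the closure of some $D_j$ and is therefore dominated by a convex corner of any containing DYD. Your derivation of associativity from the minimality characterisation is also identical in spirit to the paper's.
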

\begin{proof}
Let $D_1$ be defined by $Y^1=\{y_i^{1} \colon i\in I\},x^1$ (see~\eqref{eq:def:DYD:Y}) and similarly for $D_2$. Let $x\in\bbR^2$ be the vector such that $\bbH_{v_i}(x^1)\cup\bbH_{v_i}(x^2)=\bbH_{v_i}(x)$ for $i\in\{1,2\}$. Let $Y$ be the set of $y_i\in Y^1\cup Y^2$ such that for all $y_j\in Y^1\cup Y^2$ with $y_i\neq y_j$ we have $\bbH_{u_1}(y_j)\cap\bbH_{u_2}(y_j)\not{\supset}\bbH_{u_1}(y_i)\cap\bbH_{u_2}(y_i)$. We denote by $D$ the DYD defined by $Y,x$ and claim that for any DYD $D'\supset D_1\cup D_2$ we have $D'\supset D$, which is enough to conclude that $D=D_1\vee D_2$ is well defined. Let $D'$ be defined by $Y',x'$.

Note that for each $y_i\in Y$ (and in fact in $Y_1\cup Y_2$) there is a sequence of points in $D_1$ or $D_2$ converging to $y_i$, so that (by extraction of a subsequence) there exists $y'_j$ with $\bbH_{u_1}(y'_j)\cap\bbH_{u_2}(y'_j)\supset\bbH_{u_1}(y_i)\cap\bbH_{u_2}(y_i)$. Similarly, there is a sequence of points in $D_1$ or $D_2$ converging to the boundary of $\bbH_{v_1}(x)$, so that $\bbH_{v_1}(x')\supset \bbH_{v_1}(x)$ and similarly for $v_2$. Thus, we do have $D'\supset D$.

Finally, the commutativity is obvious and the associativity follows from the characterisation of $D_1\vee D_2$ as the minimal DYD containing both $D_1$ and $D_2$.
\end{proof}

We analogously define the \emph{span} $D_1\vee D_2$ of two CDYD $D_1$ and $D_2$ -- the minimal CDYD containing both -- and note that it coincides with their union (which is also commutative and associative). We also define the \emph{span} $C\vee D$ of a DYD $D$ and a CDYD $C$ as the minimal CDYD containing $(C\cup D)\setminus\partial$, which coincides with $C\vee C(D)$. The proof that it is well defined is analogous to Lemma~\ref{lem:span}. 

We have thus defined an associative and commutative binary operation $\vee$ on all DYD and CDYD. Moreover, the idempotent unary operation $C(\cdot)$ is distributive with respect to $\vee$ and $C(D_1)\vee D_2=C(D_1\vee D_2)$. Furthermore, the span of several DYD is the minimal DYD containing all of them, while the span of several DYD and at least one CDYD is the minimal CDYD containing all the corresponding CDYD.

\subsection{Droplet algorithm and spanned droplets}

A \emph{droplet} is any DYD contained in $\Lambda$ or CDYD.
We are now ready to  define our
\emph{droplet algorithm}, which takes as input a finite set $K\subset\Lambda\cap\bbZ^2$ of infections and outputs a set $\cD$ of disjoint connected droplets. It proceeds as follows.
\begin{itemize}
    \item Form an initial collection of DYD $\cD$ consisting of $Q(C)$ for all clusters $C$ of $K$. If a DYD $D\in\cD$ intersects $\partial$, replace it by its CDYD, $C(D)$, to obtain a droplet.
    \item As long as it is possible, replace two intersecting droplets of $\cD$ by their span. If the span intersects $\partial$, replace it by its CDYD to obtain a droplet.
    \item Output the collection $\cD$ obtained when all droplets are disjoint.
\end{itemize}
We similarly define the \emph{modified droplet algorithm} by replacing $Q(C)$ by $Q'(C)$ and clusters by modified clusters above.

The output $\cD$ is clearly a collection of disjoint connected droplets. Indeed, by induction all $x_i$ corners of droplets remain in $\Lambda$ (see Figure~\ref{fig:span}), so that DYD remain connected when replaced by CDYD.

\begin{rem}
\label{rem:algo}
From the results of Section \ref{sec:span} it is clear that the order of merging does not impact the output of the algorithm, which is thus well defined. It can also be expressed as the minimal collection of disjoint droplets containing the intersection with $\Lambda$ of the original collection of quadrilaterals. This minimal collection is well defined. Consequently, the union of the output is increasing in the input.
\end{rem}

\begin{defn}[Spanned droplets]
\label{def:spanned}
Let $D$ be a droplet and $K\subset\bbZ^2$. We say that $D$ is \emph{spanned} for $K$ with boundary $\partial$ if the output of the droplet algorithm for $K\cap D$ has a droplet containing $D$. We omit $K$ and $\partial$ if they are clear from the context. Similarly, $D$ is \emph{modified spanned} if the output of the modified droplet algorithm for $K\cap D$ has a droplet containing $D$.
\end{defn}
Note that, when seen as an event, a droplet being spanned is monotone. It is also clear that each droplet appearing in (the intermediate or final stages of) the droplet algorithm is spanned and similarly for the modified droplet algorithm. Indeed, the clusters responsible for creating a droplet in the course of the algorithm are contained in the droplet, so each of them is still a cluster of $K\cap D$ (recall that crumbs  have diameter much smaller than $C_3$).

\subsection{Properties of the algorithm}
We next establish several properties of the algorithm. The approach is similar to the one of~\cite{Bollobas14} with the notable exception of the key Closure Proposition~\ref{prop:closure}. We start with the following purely geometric statement.
\begin{lem}[Subadditivity]
\label{lem:subadd:diam}
Let $D_1$ and $D_2$ be two DYD or CDYD with non-empty intersection. Then
\[|D_1\vee D_2|\le |D_1|+|D_2|.\]
Furthermore, if $D$ is a DYD intersecting $\partial$, then $|C(D)|\le|D|$.
\end{lem}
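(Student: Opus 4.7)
My plan is to split into three cases according to the types (DYD or CDYD) of $D_1$ and $D_2$, and then dispatch the separate $C(D)$ assertion.

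\textbf{Two CDYDs.} I would use the observation following Lemma~\ref{lem:span} that $D_1\vee D_2=D_1\cup D_2$. Picking any $z\in D_1\cap D_2$ and applying the triangle inequality through $z$, for any $x,y\in D_1\cup D_2$ the pairs $(x,z)$ and $(z,y)$ each lie in a common $D_i$, so $\|x-y\|\le\diam(D_1)+\diam(D_2)$. This gives $\diam(D_1\vee D_2)\le\diam(D_1)+\diam(D_2)$, and dividing by $C_1$ yields the size bound.

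\textbf{Mixed case.} Say $D_1$ is a DYD and $D_2$ a CDYD. Then $D_1\vee D_2=C(D_1)\cup D_2$ is a CDYD. Repeating the triangle-inequality argument with some $z\in C(D_1)\cap D_2$ and noting $\diam(C(D_1))\le\diam(D_1)$ gives $\diam(D_1\vee D_2)\le\diam(D_1)+\diam(D_2)$. Using the relation $\diam(D_1)=\Theta(|D_1|)$ stated just after the size definition, together with the fact that $C_1$ is chosen much larger than the implicit constant, I would conclude $|D_1\vee D_2|\le|D_1|+|D_2|$.

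\textbf{Two DYDs.} This is a purely geometric claim. I would reduce to the bounding quadrilaterals via $|D_i|=|Q(D_i)|$ and then invoke the explicit halfplane and corner description of the span in the proof of Lemma~\ref{lem:span}: the $v_1$- and $v_2$-bounding halfplanes of $D_1\vee D_2$ are the unions of those of $D_1$ and $D_2$, and the convex corners $y_i$ of the span are inherited from $D_1$ or $D_2$. Thus the extremes of $D_1\vee D_2$ in the direction $v_1+\pi/2$ are attained at corners of $D_1$ or $D_2$, so $\pi(D_1\vee D_2)$ equals the convex hull of $\pi(D_1)\cup\pi(D_2)$. Since $D_1\cap D_2\neq\varnothing$ implies $\pi(D_1)\cap\pi(D_2)\neq\varnothing$, the length of that convex hull is at most $|\pi(D_1)|+|\pi(D_2)|=|D_1|+|D_2|$.

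\textbf{The $C(D)$ claim.} Since $C(D)\subset D$, we have $\diam(C(D))\le\diam(D)=O(|D|)$, so $|C(D)|=\diam(C(D))/C_1\le|D|$ once $C_1$ is large enough compared to the implicit constant.

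The main obstacle is the last step of the DYD case: verifying rigorously that the span operation does not extend the projection in the $v_1+\pi/2$-direction beyond the convex hull of the two input projections. This requires a careful analysis of which corner of $D_1\vee D_2$ achieves the extremes of $\pi$, using the halfplane/corner descriptions of the DYD and the fact that the four directions of $\cS$ are in general non-orthogonal, so that the ``stitching'' of boundaries in the span cannot create a new extreme point outside $D_1\cup D_2$.
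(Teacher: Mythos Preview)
Your overall case split mirrors the paper's, and the two-CDYD argument via the triangle inequality is exactly what the paper does. However, there is a genuine error in both the mixed case and the $C(D)$ claim: you assert $\diam(C(D_1))\le\diam(D_1)$ (respectively $C(D)\subset D$), and neither holds. The operation $C(\cdot)$ replaces the $v_1,v_2$-sides of $D$ by the boundary of $\Lambda$, so $C(D)$ can and does stick out of $D$; Figure~\ref{fig:DYD} already shows this. More dramatically, in the mixed case nothing forces $D_1$ to be near $\partial$: if $D_1$ is a small DYD lying deep inside $\Lambda$, then $C(D_1)=\Lambda\cap\bigcup_i\bbH_{u_1}(y_i)\cap\bbH_{u_2}(y_i)$ stretches from the $u$-sides of $D_1$ all the way to $\partial$, and its diameter is comparable to $d(D_1,\partial)$, not to $\diam(D_1)$.

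The paper repairs the mixed case with a different idea: pick $a\in D_1\cap D_2$ and note that each single-corner CDYD $C(y_i)$, for $y_i$ a convex corner of $C(D_1)$, lies within distance $O(\diam(D_1))$ of $C(a)$; since $D_2$ is a CDYD containing $a$, we have $C(a)\subset D_2$. Hence $C(D_1)=\bigcup_i C(y_i)$ lies in an $O(\diam(D_1))$-neighbourhood of $D_2$, giving $\diam(C(D_1)\cup D_2)\le\diam(D_2)+O(\diam(D_1))$, and dividing by the large $C_1$ finishes. For the $C(D)$ assertion, the paper reduces to $D=Q(D)$ and then uses that, since $D$ meets $\partial$, the region $\Lambda\cap\bbH_{u_1}(y)\cap\bbH_{u_2}(y)$ has diameter $O(\diam(D))$---again only a constant-factor bound, not the containment you claimed, and one still needs $C_1$ large.

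For the two-DYD case your projection idea is a genuine alternative to the paper's route (which defers to a quadrilateral lemma from~\cite{Bollobas15}), and it is at least plausible. But, as you yourself flag, the key step that the extremes of $\pi(D_1\vee D_2)$ do not exceed those of $\pi(D_1)\cup\pi(D_2)$ is not established: the corners $x$ and $y$ of $Q(D_1\vee D_2)$ are in general \emph{new} points, not corners of $Q(D_1)$ or $Q(D_2)$, so you would still have to identify which vertex of the span quadrilateral realises $\sup\pi$ and $\inf\pi$ (this is where the assumption $u_2<v_1-\pi/2$ enters) and argue from there.
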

\begin{proof}
First assume that $D_1$ and $D_2$ are DYD. Since $|D|=|Q(D)|$ for any DYD $D$ and $D_1\vee D_2\subset Q(Q(D_1)\vee Q(D_2))$, it suffices to prove the assertion for merging quadrilaterals instead of DYD. But in that case it is not hard to check directly and is a particular case of Lemma~15 of the first arXiv version of~\cite{Bollobas15} (or Lemma~23 of the second version). Since similar (but actually slightly more involved) details were omitted in the proof of the corresponding Lemma~4.6 of~\cite{Bollobas15} and differed to earlier versions, we will not go into useless detail here either. To give a sketch of a possible argument, one can check that for fixed shapes of $Q(D_1)$ and $Q(D_2)$ the maximal $Q(Q(D_1)\vee Q(D_2))$ is achieved when their intersection is reduced to a vertex. Yet, in those configurations one can obtain the $v_1$ and $v_2$ sides of $Q(Q(D_1)\vee Q(D_2))$ as the union of those of $Q(D_1)$ and translates of those of $Q(D_2)$ (see Figure~\ref{fig:span}). This concludes the proof, as only $v_1$ and (possibly) $v_2$ sides contribute to $|\cdot|$ by Lemma~\ref{lem:directions}.

Next assume that $D_1$ is a DYD and $D_2$ is a CDYD. Let $Y=\{y_i \colon i\in I\}$ be the set of vectors defining $C(D_1)$ and let $a\in D_1\cap D_2$. Since $Y\subset \overline{D_1}$, we have that $d(y_i,a)\le \diam (D_1)$. It then easily follows that the CDYD defined by only one corner, $y_i$, which we denote $C(y_i)$, is within distance $O(\diam(D_1))$ from $C(a)$. But then $C(D_1)=\bigcup_{i\in I} C(y_i)$ is within distance $O(\diam(D_1))$ from $C(a)$. Thus, $|D_1\vee D_2|\le (\diam (D_2)+O(\diam (D_1)))/C_1\le |D_2| + |D_1|$, since $\diam(D_1)=O(|D_1|)$ and all implicit constants depend only on $\cS$ and are thus much smaller than $C_1$.

Next assume that $D_1$ and $D_2$ are CDYD. Then the statement is trivial, because $D_1\vee D_2=D_1\cup D_2$, so $\diam (D_1)+\diam(D_2)\ge \diam(D_1\vee D_2)$ by the triangle inequality.

Finally, let $D$ be a DYD intersecting $\partial$. Then, $|C(Q(D))|\ge|C(D)|$ and $|Q(D)|=|D|$, so we may assume that $D=Q(D)$ and prove $|C(D)|\le|D|$. But in this case it is easy to see that $\diam (C(D))=O(\diam(D))=O(|D|)$ with constants depending only on $\cS$, which concludes the proof.
\end{proof}

The subadditivity lemma will be used to prove the next two adaptations of classical results.
\begin{lem}[Aizenman-Lebowitz]
\label{lem:AL}
Let $K$ be a finite set and let $D$ be a spanned droplet with $|D|\ge C_4^2$. Then for all $C_4^2/C_1\le k\le |D|/C_1$ there exists a connected spanned droplet $D'$ with $k\le |D'|\le 2k$. The same statement holds for modified spanned droplets.
\end{lem}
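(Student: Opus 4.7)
The plan is to run the droplet algorithm on $K\cap D$ and follow the binary tree of merges that produces the output droplet containing $D$, looking for an intermediate droplet of size in $[k,2k]$. This is a standard Aizenman--Lebowitz-style argument, the only subtlety being to check that the CDYD-replacement step does not spoil the size crossing.

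First I would observe that, since $D$ is spanned, the algorithm applied to $K\cap D$ produces some output droplet $D^\star\supset D$. Both for DYD and for CDYD the size $|\cdot|$ is monotone under inclusion, so $|D^\star|\ge|D|\ge kC_1\ge k$. On the other hand, the initial droplets of the algorithm are the quadrilaterals $Q(C)$ associated to clusters $C$ of $K\cap D$ (possibly CDYD-replaced if they meet $\partial$). Each cluster has diameter at most $C_3$, so $Q(C)$ has diameter $O(C_4)$ with an implicit constant depending only on $\cS$ and $\cU$, hence $|Q(C)|=O(C_4)$. Since $C_1\ll C_4$ we get $O(C_4)<C_4^2/C_1\le k$, so every initial droplet has size strictly less than $k$.

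Next I would track the binary tree of merges whose root is $D^\star$. At an internal node, two droplets $D_1,D_2$ are merged into $D_1\vee D_2$ and possibly replaced by $C(D_1\vee D_2)$ when they intersect $\partial$. The merge is size-monotone, because $D_1\vee D_2\supset D_1\cup D_2$, while by Lemma~\ref{lem:subadd:diam} the CDYD-replacement step does not increase size. Starting from leaves of size less than $k$ and ending at a root of size at least $k$, let $D'$ be the droplet produced at the first node whose post-merge-and-CDYD size reaches $k$. Both children of that node have size strictly less than $k$, so by the subadditivity part of Lemma~\ref{lem:subadd:diam}
\[k\le|D'|\le|C(D_1\vee D_2)|\le|D_1\vee D_2|\le|D_1|+|D_2|<2k.\]

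Finally, I would conclude by recalling that every droplet appearing in any stage of the algorithm for $K\cap D$ is spanned for $K$ (as remarked right after the definition of spanned droplets: the clusters that built $D'$ are contained in $D'$ and thus are still clusters of $K\cap D'\subset K$), and that every droplet produced by the algorithm is connected (as noted immediately after the description of the algorithm, since the $x_i$-corners always remain in $\Lambda$). The modified case is identical on replacing $\Gamma$, $C_2$, $C_4$ and $Q$ by $\Gamma'$, $C_2'$, $C_4'$ and $Q'$, and the condition $k\ge C_4^2/C_1$ absorbs the resulting initial sizes $O(C_4')$ comfortably. The one place where care is needed, and which the second part of Lemma~\ref{lem:subadd:diam} is tailor-made for, is to guarantee that the CDYD-replacement within a merge step cannot make the size jump back below $k$, which is what makes the size crossing argument go through.
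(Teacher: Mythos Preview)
Your approach is essentially the paper's: use subadditivity (Lemma~\ref{lem:subadd:diam}) to bound the size increase at each merge step by a factor of two, note that initial droplets have size $O(C_4)<k$ while the output droplet has size $\ge k$, and pick the first intermediate droplet crossing $k$. One small correction: the size is \emph{not} monotone under inclusion when a DYD $D$ is contained in a CDYD $D^\star$, since for CDYD one has $|D^\star|=\diam(D^\star)/C_1$ while for DYD $|D|$ is the projection length $\le\diam(D)$; one only gets $|D^\star|\ge|D|/C_1$, which is exactly what the paper writes and precisely why the hypothesis is $k\le|D|/C_1$ rather than $k\le|D|$. Your conclusion $|D^\star|\ge k$ is therefore still correct, and the same remark applies to your ``merge is size-monotone'' aside (which your crossing argument does not actually need, since taking the first node of size $\ge k$ in execution order automatically forces both children to have size $<k$).
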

\begin{proof}
By Lemma~\ref{lem:subadd:diam} at each step of the droplet algorithm the largest size of a droplet appearing in the collection at most doubles. Initially the largest size is at most $C_1C_4$ and in the end there is a (unique) droplet $D''\supset D$, so that $|D''|\ge |D|/C_1\ge C_4^2/C_1>C_1C_4$. Then there is a stage of the algorithm at which the maximal size of a droplet in $\cD$ is between $k$ and $2k$, which is enough since all droplets appearing in the droplet algorithm are connected and spanned. The proof for modified spanned droplets is identical, using the modified droplet algorithm.
\end{proof}

\begin{lem}[Extremal]
\label{lem:extremal}
Let $K \subset \bbZ^2$ and let $D$ be a droplet spanned for $K$. Then the total number of disjoint clusters for $K \cap D$ in $D$ is at least $\diam(D)/C_4^2$.
\end{lem}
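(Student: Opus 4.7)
The plan is to combine two ingredients: the subadditivity of Lemma~\ref{lem:subadd:diam}, iterated along the droplet algorithm producing $D$, together with a uniform $O(C_4)$ bound on the size of the initial droplets that the algorithm starts from. Since $D$ is spanned for $K$, the algorithm applied to $K\cap D$ outputs some droplet $D''\supset D$, and I will bound from below the number $n$ of clusters of $K\cap D$ whose initial droplets are merged into $D''$ during the algorithm; since $n$ is at most the total number of disjoint clusters in $D$, bounding $n$ is enough.

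First I would observe that each initial droplet $D_i$ is either $Q(C_i)$, a DYD with $\diam(Q(C_i))=\Theta(C_4)$ (because $\diam(C_i)\le C_3\ll C_4$ and the construction only enlarges $C_i$ by a thickening of radius $C_4$), or its replacement $C(Q(C_i))$ when $Q(C_i)$ meets $\partial$. In the first case $|D_i|=\Theta(\diam(D_i))\le c_0 C_4$ with some $c_0$ depending only on $\cS$; in the second case the final assertion of Lemma~\ref{lem:subadd:diam} gives $|D_i|=|C(Q(C_i))|\le |Q(C_i)|\le c_0 C_4$. Next I would argue by induction along the merging steps of the algorithm producing $D''$: each binary span of intersecting droplets is bounded by subadditivity, and each replacement of a DYD by its CDYD can only decrease the size. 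This yields
\[
|D''|\;\le\;\sum_{i=1}^{n}|D_i|\;\le\;n\,c_0\,C_4.
\]

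To conclude, I would translate the bound on $|D''|$ into one on $\diam(D)$ using the inclusion $D\subset D''$. If $D''$ is a DYD then $\diam(D)\le\diam(D'')=\Theta(|D''|)$ with a constant depending only on $\cS$, while if $D''$ is a CDYD then $\diam(D)\le\diam(D'')=C_1|D''|$. Either way one obtains $\diam(D)\le C_1\,c_0\,n\,C_4$, and the hierarchy $C_4\gg C_1,c_0$ absorbs the constants to give $n\ge \diam(D)/C_4^2$. The whole argument is essentially bookkeeping -- no genuinely new idea beyond Lemma~\ref{lem:subadd:diam} is required. The only point to handle with care is how the notion of size transforms between DYDs and CDYDs, so that the factor $C_1$ coming from the convention $|D|=\diam(D)/C_1$ for CDYDs is accounted for, which is why the final exponent of $C_4$ is $2$ rather than $1$.
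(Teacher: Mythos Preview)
Your overall strategy is the same as the paper's: bound the size of the output droplet by subadditivity in terms of the number of contributing initial droplets, convert between size and diameter, and absorb constants into $C_4^2$. The analytic steps are all fine.

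There is, however, a genuine gap in one sentence. You write ``since $n$ is at most the total number of disjoint clusters in $D$, bounding $n$ is enough''. This inequality goes the wrong way. The clusters of $K\cap D$ whose initial quadrilaterals feed into $D''$ need not be pairwise disjoint: by Definition~\ref{def:cluster} a cluster is a \emph{maximal} $\Gamma$-connected subset of diameter at most $C_3$, and distinct maximal such sets inside the same connected component typically overlap. So $n$ can exceed the maximum number of pairwise disjoint clusters, and your deduction ``disjoint clusters $\ge n$'' is unjustified.

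The fix is easy and is exactly what the paper does: since every cluster has diameter at most $C_3$, each one intersects only a bounded number (at most some $c_1=O(1)$, depending only on $\cU$) of other clusters. A greedy selection then yields at least $n/c_1$ pairwise disjoint clusters among the $n$ contributing ones. Combining this with your bound $n\ge\diam(D)/(C_1 c_0 C_4)$ gives
\[
\#\{\text{disjoint clusters in }D\}\ \ge\ \frac{n}{c_1}\ \ge\ \frac{\diam(D)}{c_1 C_1 c_0 C_4}\ \ge\ \frac{\diam(D)}{C_4^2},
\]
using $C_4\gg c_1 C_1 c_0$. With this one-line correction your argument matches the paper's proof.
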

\begin{proof}
In this proof all clusters will be clusters for $K \cap D$. Assume that at the initial stage of the algorithm there are $k$ clusters (not disjoint). One can then find $k/C_4'$ disjoint ones, since their diameter is at most $C_3$. Furthermore, by Lemma~\ref{lem:subadd:diam} the total size of droplets in the collection $\cD$ is decreasing, so that $|D|/C_1\le|D'|\le k C_1C_4$, where $D'\supset D$ is some droplet in the output of the algorithm. Indeed, $|Q(C)|\le C_1C_4$ for all clusters $C$. This concludes the proof, since $|D|\ge \diam(D)/C_1$ for all DYD and CDYD.
\end{proof}

We next transform this extremal bound into an exponential decay of the probability that a droplet is spanned until saturation at the critical size. In the following lemma, we identify the configuration $\omega$ having law $\mu$ and the set of its zeroes.
\begin{lem}[Exponential decay]
\label{lem:exp:decay:diam}
Let $D$ be a droplet with $|D|\le 2/(C_5q^{\alpha})$. Then
\[\mu(D\text{ is spanned for $\omega$})<\exp(-C_4|D|).\]
\end{lem}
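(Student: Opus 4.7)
The plan is to deduce the lemma from the Extremal Lemma (Lemma~\ref{lem:extremal}) via a union bound that exploits the hypothesis $|D|q^{\alpha}\leq 2/C_{5}$. If $D$ is spanned, Lemma~\ref{lem:extremal} gives $n:=\lceil c|D|/C_{4}^{2}\rceil$ pairwise disjoint clusters in $K\cap D$, for a constant $c>0$ depending only on $\cS$ and encoding $\diam(D)\geq c|D|$ (valid for all DYD and CDYD). Each cluster is a connected $\Gamma$-subgraph of diameter at most $C_{3}$; if it lies at distance more than $C_{2}$ from $\partial$ it contains at least $\alpha$ infected sites (by the observation right after Definition~\ref{def:cluster}), whereas boundary clusters are confined to the strip $\cB:=\{x\in D\cap\bbZ^{2}\colon d(x,\partial)\leq C_{2}\}$ of cardinality $O(C_{1}C_{2}|D|)$. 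Writing $n_{b}$ and $n_{i}$ for the numbers of boundary and non-boundary clusters among the $n$ disjoint ones, at least one of the events $\{n_{i}\geq\lceil n/2\rceil\}$, $\{n_{b}\geq\lceil n/2\rceil\}$ holds, and I bound each separately.

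For the non-boundary case, the number of connected $\Gamma$-subgraphs of diameter at most $C_{3}$ in $D$ containing at least $\alpha$ vertices is $O(|D|^{2})$ (each is rooted at one of $O(|D|^{2})$ vertices with $O(1)$ local continuations), and each such set of $\alpha$ sites is all infected with probability $q^{\alpha}$. Hence
\[
\mu(n_{i}\geq\lceil n/2\rceil)\leq\binom{O(|D|^{2})}{\lceil n/2\rceil}\bigl(O(1)q^{\alpha}\bigr)^{\lceil n/2\rceil}\leq\left(\frac{O(C_{4}^{2})|D|q^{\alpha}}{c}\right)^{n/2}\leq\left(\frac{O(C_{4}^{2})}{cC_{5}}\right)^{n/2},
\]
which the choice $C_{5}\gg C_{4}$ makes $\leq e^{-2C_{4}|D|}$. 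In the boundary case, each of the $\lceil n/2\rceil$ disjoint boundary clusters contains at least one infected site in $\cB$, so
\[
\mu(n_{b}\geq\lceil n/2\rceil)\leq\binom{O(C_{1}C_{2}|D|)}{\lceil n/2\rceil}q^{\lceil n/2\rceil}\leq\bigl(O(C_{1}C_{2}C_{4}^{2})q\bigr)^{n/2},
\]
which is $\leq e^{-2C_{4}|D|}$ for $q$ sufficiently small.

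The small-$|D|$ regime $|D|\leq C_{4}^{2}/c$ (where Lemma~\ref{lem:extremal} only guarantees a single cluster) is treated directly: spanning forces at least one cluster in $K\cap D$, whose probability is at most $O(|D|^{2})q^{\alpha}+O(C_{1}C_{2}|D|)q$; since $|D|$ is bounded in this range and $q\to 0$, this stays below $e^{-C_{4}|D|}$ (the latter being bounded below by a positive constant on this range). Summing the cases and absorbing the factor of $2$ into $C_{4}$ yields the strict inequality. The main obstacle I anticipate is the unequal contributions of the two cluster types: non-boundary clusters have a small cost $q^{\alpha}$ per cluster but $|D|^{2}$ possible positions, whereas boundary clusters have only the weak cost $q$ per cluster but live in a strip of cardinality linear in $|D|$. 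Balancing these through the constant hierarchy $C_{1}\ll C_{2}\ll\ldots\ll C_{5}$ and the smallness of $|D|q^{\alpha}$ is the key technical point.
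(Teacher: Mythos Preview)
Your proposal is correct and follows essentially the same approach as the paper's own proof: invoke the Extremal Lemma to obtain $\Theta(|D|/C_4^2)$ disjoint clusters, distinguish boundary from non-boundary clusters (the former contributing a factor $q$ each on a strip of linear size, the latter $q^\alpha$ each among $O(|D|^2)$ positions), and conclude by a union bound exploiting $|D|q^\alpha\le 2/C_5$. The only cosmetic differences are that the paper sums over all possible splits $l$ between the two types rather than using your ``at least half of one type'' dichotomy, and that the paper does not isolate the small-$|D|$ case separately.
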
 
\begin{proof}
Let $D$ be a droplet with $|D|\le 2/(C_5 q^{\alpha})$, so that $\diam(D)=d\le 2C_1/(C_5 q^\alpha)$. By Lemma~\ref{lem:extremal} if $D$ is spanned for $\omega$, it contains at least $d/C_4^2$ disjoint clusters for $\omega \cap D$, each one having diameter at most $C_3$. Each non-boundary cluster has at least $\alpha$ sites by Observation \ref{obs:clusters:sites}, while boundary clusters are non-empty and located at distance at most $C_2$ from $\partial$. Thus, we have the union bound
\begin{align*}
\mu(D\text{ is spanned for $\omega$})\le& \sum_{l=0}^{d/C_4^2}\binom{C_3^{2\alpha} d^2}{l} \binom{C_3d}{d/C_4^2-l} q^{l\alpha +(d/C_4^2-l)}\\
\le &\sum_{l=d/(2C_4^2)}^{d/C_4^2}(C'_4q^\alpha d^2/l)^l.e^d+\sum_{l'=d/(2C_4^2)}^{d/C_4^2}(C'_4qd/l' )^{l'}.e^d\\
\le&\sum_{l=d/(2C_4^2)}^{d/C_4^2}\left(\frac{C_4'e^{2C^2_4}q^\alpha}{1/(2C_4^2)}\cdot\frac{2C_1}{C_5q^\alpha}\right)^{l}+\sum_{l'=d/(2C_4^2)}^{d/C_4^2}\left(2C_4^2C_4'e^{2C_4^2}q\right)^{l'}\\
\le& \exp(-C_4 d),
\end{align*}
recalling that $C_5$ is sufficiently large depending on $C_4$, $C_4'$ and $C_1$.
\end{proof}

Our next aim is to prove that the closure of a set is contained in its droplet collection up to very local infections next to initial ones. To that end we will need some preliminary results, similar to those used by Bollobás, Duminil-Copin, Morris and Smith~\cite{Bollobas14}.

\begin{obs}[Lemma~6.5 of~\cite{Bollobas14}]
\label{obs:C1:well:defined}
Let $u$ be a rational non-semi-isolated stable direction. Let $K\subset\bbZ^2$ with $|K|<\alpha(u)$ (if $\alpha(u)=\infty$ the condition is that $K$ is finite, but there is no a priori bound on its size). Then there exists a constant $C(\cU,u,|K|)$ not depending on $K$ such that $[K]_{\bbH_u}$ is within distance $C(\cU,u,|K|)$ from $K$.
\end{obs}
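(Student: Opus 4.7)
The plan is to prove this by induction on $k = |K|$, adapting the argument of Lemma~6.5 of~\cite{Bollobas14} and extending it to the case $\alpha(u)=\infty$. The base case $k = 0$ is non-trivial only if $\alpha(u) \geq 1$, in which case $u$ is stable, no rule $U \in \cU$ fits in $\bbH_u$, and hence $[\bbH_u \cap \bbZ^2] = \bbH_u \cap \bbZ^2$, so $[\varnothing]_{\bbH_u} = \varnothing$.

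For the inductive step, fix $k < \alpha(u)$ and choose a threshold $D = D(\cU, u, k)$ larger than $2\max_{k' < k} C(\cU, u, k')$ plus the maximum diameter of any rule in $\cU$. Given $K$ with $|K| = k$, I distinguish two cases. If $K = K_1 \sqcup K_2$ with $d(K_1, K_2) > D$ and both parts non-empty, then $|K_i| < k$, so by induction $[K_i]_{\bbH_u}$ is within distance $C(\cU, u, |K_i|)$ of $K_i$. By the choice of $D$, no rule can bridge the two closures or reach $\bbH_u$ from both, so $[K]_{\bbH_u} = [K_1]_{\bbH_u} \sqcup [K_2]_{\bbH_u}$, still within distance $\max_{k' < k} C(\cU, u, k')$ of $K$.

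Otherwise $K$ is $D$-connected, hence $\textrm{diam}(K) \leq (k-1)D$. I would then leverage the rationality of $u$: pick a primitive lattice vector $v \in \bbZ^2$ with $\langle v, u\rangle = 0$, so that translation by $\bbZ v$ commutes with the closure and preserves $\bbH_u$. Modulo this action, the bounded-diameter set $K$ lies in a fixed strip parallel to $\partial \bbH_u$. If additionally $\min_{x \in K}\langle x, u\rangle \geq -L$ for a suitable $L$, then only finitely many configurations of $K$ are possible modulo translation by $\bbZ v$, and each has $[K]_{\bbH_u}$ finite by the definition of $\alpha(u)$ (since $|K| < \alpha(u)$); this yields a uniform bound. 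If instead $K$ is much deeper than $L$ (chosen larger than the previous bound plus the range of a rule), then $\bbH_u$ and $K$ do not interact during the closure process, giving $[K]_{\bbH_u} = [K]\setminus \bbH_u$, and the finiteness and uniform boundedness of $[K]$ itself follow by applying the previous subcase to a translate of $K$ brought close to $\partial\bbH_u$, using that $[K]$ depends only on the shape of $K$.

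The main obstacle is the connected subcase when $K$ is far from $\partial\bbH_u$: one must show that $[K]$ (the unconstrained bootstrap closure) is uniformly bounded in terms of $k$ and the diameter, despite the fact that in the strongly stable case $\alpha(u) = \infty$ does not directly restrict $|[K]|$. The reduction sketched above is what makes the non-semi-isolation hypothesis on $u$ essential: it ensures that after translating $K$ back towards $\partial\bbH_u$, one still has $|K| < \alpha(u)$ so that finiteness from the small-$L$ subcase is available, and that the translation symmetry provided by rationality discretises the problem to finitely many cases.
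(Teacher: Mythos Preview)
Your overall architecture matches the paper's: induction on $|K|$, split the inductive step into a disconnected case (handled by the induction hypothesis) and a $D$-connected case, and in the latter separate ``close to $\partial\bbH_u$'' from ``far''. The close subcase is fine and is exactly what the paper does: bounded diameter plus the $\bbZ v$-translation symmetry leave finitely many configurations, each with $[K]_{\bbH_u}$ finite by the definition of $\alpha(u)$ (extended to strongly stable $u$).

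The gap is in your far subcase. You write that when $K$ is deep, ``$\bbH_u$ and $K$ do not interact during the closure process, giving $[K]_{\bbH_u}=[K]\setminus\bbH_u$'', and then that boundedness of $[K]$ follows by translating $K$ to some $K'$ near $\partial\bbH_u$ and applying the close subcase. But the close subcase only bounds $[K']_{\bbH_u}$, and from $[K']\subset[K'\cup\bbH_u]$ one gets merely $[K']\setminus\bbH_u\subset[K']_{\bbH_u}$; nothing prevents $[K']\cap\bbH_u$ from being large (for a general update family it can be infinite). So you have not bounded $[K']$, hence not $[K]$ either, and the claim that $\bbH_u$ and $K$ ``do not interact'' is precisely what you were trying to prove. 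The argument is circular at this point.

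The paper breaks the circularity by a different device: since $\cU$ is critical with infinitely many stable directions, the four directions $\cS$ chosen in Lemma~\ref{lem:directions} give a stable quadrilateral containing any bounded $K$, so $[K]$ is finite with a bound depending only on $\diam(K)$. With that in hand, one immediately gets $[K]_{\bbH_u}=[K]$ once $K$ is far enough, and the uniform bound follows. Your reduction via a single half-plane $\bbH_u$ cannot replace this step; you need some input guaranteeing finiteness of the \emph{unconstrained} closure, and that is where the ambient critical hypothesis (through $\cS$ or an equivalent set of stable directions whose convex hull contains $0$) enters.
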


Since we will require some improvements later, we spell out a  proof of the above result for completeness (actually our proof is slightly different from the one in \cite{Bollobas14}).
\begin{proof}[Proof of Observation \ref{obs:C1:well:defined}]
We prove the statement by induction on $|K|$. For a $K=\{x\}$ this is easy, since if $\<x,u\>$ is sufficiently large $[K]_{\bbH_u}=K$ and otherwise there is a single possible configuration for each value of $\<x,u\>$ up to translation. Assume the result holds for $|K|<n$. If one can write $K=K_1\sqcup K_2$ with $K_1,K_2\neq\varnothing$ and $d(K_1,K_2)>2C(\cU,u,n-1)+O(1)$, then $[K]_{\bbH_u}=[K_1]_{\bbH_u}\sqcup [K_2]_{\bbH_u}$, since $[K_1]_{\bbH_u}$ and $[K_2]_{\bbH_u}$ are at sufficiently large distance, hence no site can use both to become infected. Assume that, on the contrary, there are no large gaps between parts of $K$. There is a finite number of such $K$ up to translation and for each of these $[K]$ is finite (e.g. since $K$ is contained in a quadrilateral with sides perpendicular to $\cS$), so within uniformly bounded distance from $K$. Therefore, if $\bbH_u$ is sufficiently far from $K$, $[K]_{\bbH_u}=[K]$. Otherwise, there is a finite number of possible $K$ up to translation perpendicular to $u$ and for each of them $[K]_{\bbH_u}$ is finite, so that one can indeed find a finite uniform constant $C(\cU,u,n)$ as claimed.
\end{proof}
A quantitative version of this result was proved by Mezei and the first author~\cite{Hartarsky18NP}. An easy corollary of Observation~\ref{obs:C1:well:defined} is the fact that crumbs can only grow very locally (see Figure~\ref{fig:crumbs}). 
\begin{cor}
\label{cor:crumbs:growth}
Let $C_1$ be sufficiently large depending on $\cU$. Let $K\subset\bbZ^2$ with $|K|<\alpha$. Then $[K]$ is within distance $C_1/(6\alpha)$ from $K$. Also, for a (modified) crumb $\kappa$ we have that $\diam([\kappa])\le \alpha C_2$ and $[\kappa]$ is within distance $C_1$ from $\kappa$.
\end{cor}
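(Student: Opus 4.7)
For part (1), I would apply Observation~\ref{obs:C1:well:defined} to the four directions in $\cS$, each of which is rational, non-semi-isolated, and stable with $\alpha(u)\ge\alpha>|K|$. Since $0$ lies in the interior of the convex envelope of $\cS$ by Lemma~\ref{lem:directions}, one can translate the four half-planes $\bbH_u(x_0^{(u)})$ so that they miss $K$ yet their intersection is empty; the Observation then bounds $[K]\cap\bbH_u^c(x_0^{(u)})$ by $B(K,C(\cU,u,|K|))$ for each $u$ (using that $[K]\subset[K\cup(\bbH_u(x_0^{(u)})\cap\bbZ^2)]$ and translation invariance), and taking the union covers all of $[K]$. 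Choosing $C_1\ge 6\alpha\max_{u\in\cS,\,n<\alpha}C(\cU,u,n)$, which depends only on $\cU$, yields the required slack $C_1/(6\alpha)$.

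For the crumb statements, set $\delta=C_1/(6\alpha)$. By part (1) applied to $P_G$ (with $|P_G|=\alpha-1<\alpha$), we have $[P_G]\subset B(P_G,\delta)$; in particular $|[P_G]|$ is bounded by a constant $M(\cU)$ depending only on $\cU$. Since $G\subset[P_G]\subset B(P_G,\delta)$, the subset $P^G:=\{p\in P_G\colon B(p,\delta)\cap G\ne\varnothing\}$ already covers $G$ with balls of radius $\delta$. The $\Gamma$-connectedness of $G$ promotes to connectedness of the graph on $P^G$ with edges $p\sim p'$ whenever $\|p-p'\|\le 2\delta+C_2$: any $\Gamma$-path in $G$ between a point of $B(p,\delta)$ and a point of $B(p',\delta)$ pulls back through the cover to a path in this graph. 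Thus $\diam(P^G)\le(|P^G|-1)(2\delta+C_2)\le(\alpha-2)(2\delta+C_2)$, giving $\diam(G)\le(\alpha-2)(2\delta+C_2)+2\delta$.

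For the distance bound on $[G]$, I would observe that $[G]$ grows from $G$ by successive bootstrap steps, each adding only sites within distance $R:=\max\{\|y\|:y\in U,\ U\in\cU\}$ of the current configuration. Inductively, $G_n\subset B(G,nR)$, and the process terminates in at most $|[G]|-|G|\le M(\cU)$ steps, so $[G]\subset B(G,M(\cU)R)$. Taking $C_1\ge M(\cU)R$ (again depending only on $\cU$) proves $[G]\subset B(G,C_1)$. The diameter bound for $[G]$ then follows from the previous paragraph: $\diam([G])\le\diam(G)+2M(\cU)R\le(\alpha-2)(2\delta+C_2)+2\delta+2M(\cU)R\le\alpha C_2$, where the last inequality uses the hierarchy $C_1\ll C_2$. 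The modified case is identical with $C_2$ replaced by $C_2'$ throughout.

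The main subtlety is the distance bound for $[G]$: one cannot simply deduce $[G]\subset B(G,O(1))$ from $[P_G]\subset B(P_G,\delta)$, since nothing a priori forces $P_G$ to be Euclidean-close to $G$ beyond the cover relation $G\subset[P_G]$. The resolution is to count bootstrap steps starting from $G$, using the universal bound $|[G]|\le|[P_G]|\le M(\cU)$ provided by part (1).
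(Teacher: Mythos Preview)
Your argument is essentially correct, and for the first assertion and the diameter bound it parallels the paper's approach (the paper's one-line proof of the first assertion is terser but amounts to the same use of Observation~\ref{obs:C1:well:defined}; your four-direction version works because $0\in\mathrm{int\,conv}(\cS)$ forces $\bigcap_{u\in\cS}\bbH_u(x_0^{(u)})=\varnothing$ once the $x_0^{(u)}$ are pushed past $K$). For the diameter bound the paper takes a \emph{minimal} $P_G$ and shows it is $(C_1+C_2)$-connected; your device of passing to the subset $P^G\subset P_G$ of points actually $\delta$-close to $G$ achieves the same effect without invoking minimality.

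Where you depart more substantially is in the distance bound $[G]\subset B(G,C_1)$. The paper again exploits minimality of $P_G$: it argues that any $C_1/(2\alpha)$-connected component of $P_G$ at distance $\ge C_1/(2\alpha)$ from $G$ could be deleted, so $P_G$ itself lies within $C_1/2$ of $G$, whence $[G]\subset[P_G]\subset B(P_G,C_1/(6\alpha))\subset B(G,C_1/2+C_1/(6\alpha))$. Your step-counting alternative is neat in that it sidesteps minimality entirely; conversely, the paper's route yields the extra information that $P_G$ sits close to $G$, which you do not obtain.

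One point does need repair. You claim $|[P_G]|\le M(\cU)$ ``depending only on $\cU$'' as a consequence of $[P_G]\subset B(P_G,\delta)$ with $\delta=C_1/(6\alpha)$; but that inclusion only gives $|[P_G]|\le(\alpha-1)\cdot O(\delta^2)=O(C_1^2/\alpha)$, which depends on $C_1$. Your subsequent requirement $C_1\ge M(\cU)R$ then reads $C_1\ge O(C_1^2R/\alpha)$, an \emph{upper} bound on $C_1$ incompatible with ``$C_1$ sufficiently large''. The fix is already present in your proof of the first assertion: the true bound there is the $C_1$-independent constant $C_0:=\max_{u\in\cS,\,n<\alpha}C(\cU,u,n)$, so in fact $[P_G]\subset B(P_G,C_0)$ and one may take $M(\cU)=(\alpha-1)\cdot O(C_0^2)$, genuinely depending only on $\cU$. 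With this correction the condition $C_1\ge M(\cU)R$ is a legitimate lower bound and your argument goes through.
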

\begin{proof}
The first assertion follows from Observation~\ref{obs:C1:well:defined}, since if it were wrong, one could simply translate a set $K$ sufficiently far from a half-plane yielding a contradiction with the observation.

Next consider a (modified) crumb $\kappa$ and $P_\kappa$ minimal with $|P_\kappa|<\alpha$ and $[P_\kappa]\supset \kappa$. Then $[\kappa]\subset[P_\kappa]$ is within distance $C_1/(6\alpha)$ from $P_\kappa$. If the sites of $P_\kappa$ are not connected in the graph $\Gamma''$ on $\bbZ^2$ with connections at distance at most $C_1+C_2$, then either $\kappa$ is not connected in $\Gamma$ or $P_\kappa$ is not minimal, which are both contradictions. Similarly, if there is no site of $\kappa$ at distance smaller than $C_1/(2\alpha)$ from a $C_1/(2\alpha)$-connected component of $P_\kappa$, that component can be removed from $P_\kappa$, contradicting minimality. Hence, $P_\kappa$ is within distance $C_1/2$ from $\kappa$. The result is then immediate, as $[\kappa]$ is within distance $C_1/2+C_1/(6\alpha)$ from $\kappa$ and its diameter is at most $C_1/(3\alpha) +\diam(P_\kappa)$, while $\diam(P_\kappa)\le (\alpha-1)(C_1+C_2)$. 
\end{proof}

In order to treat infection at the concave corners of droplets we will need the following modification of Observation~\ref{obs:C1:well:defined}.
\begin{cor}
\label{cor:C1:well:defined}
Let $u_1$ and $u_2$ be rational strongly stable directions such that $\bbH_{u_1}\cup\bbH_{u_2}$ is stable for the bootstrap percolation dynamics i.e. $\nexists U\in\cU, U\subset\bbH_{u_1}\cup\bbH_{u_2}$. Let $K\subset\bbZ^2$ with $|K|\le \alpha-1$. Then $[K]_{\bbH_{u_1}\cup\bbH_{u_2}}$ is within distance $C(\cU,u_1,u_2)$ from $K$.
\end{cor}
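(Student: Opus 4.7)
I would mimic the induction on $n=|K|$ used in the proof of Observation~\ref{obs:C1:well:defined}, replacing the single half-plane $\bbH_u$ by the union $\bbH_{u_1}\cup\bbH_{u_2}$. Two key inputs make this adaptation viable: the stability hypothesis on $\bbH_{u_1}\cup\bbH_{u_2}$, which makes the base case immediate; and the fact that $u_1,u_2$ being strongly stable gives $\alpha(u_i)=\infty$, so that Observation~\ref{obs:C1:well:defined} applies with boundary $\bbH_{u_i}$ for any finite set.

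\textbf{Induction.} The base case $n=0$ yields $[\varnothing]_{\bbH_{u_1}\cup\bbH_{u_2}}=\varnothing$ directly from the stability hypothesis. For the inductive step, let $r_{\max}$ denote the maximum diameter of a rule in $\cU$. If $K=K_1\sqcup K_2$ admits a splitting with $d(K_1,K_2)>2C(\cU,u_1,u_2,n-1)+r_{\max}$, the inductive hypothesis applied to each $K_i$ gives $[K_i]_{\bbH_{u_1}\cup\bbH_{u_2}}$ within distance $C(\cU,u_1,u_2,n-1)$ of $K_i$; these two closures are then further apart than $r_{\max}$, so no rule can simultaneously involve both, and $[K]_{\bbH_{u_1}\cup\bbH_{u_2}}=[K_1]_{\bbH_{u_1}\cup\bbH_{u_2}}\cup[K_2]_{\bbH_{u_1}\cup\bbH_{u_2}}$, giving the bound. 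Otherwise $\diam(K)$ is bounded, and I distinguish three subcases by the position of $K$ relative to the half-planes: (i) if $K$ is far from both, Corollary~\ref{cor:crumbs:growth} (applicable since $|K|<\alpha$) bounds $[K]_{\bbH_{u_1}\cup\bbH_{u_2}}=[K]$ by $C_1/(6\alpha)$ from $K$; (ii) if $K$ is close only to $\bbH_{u_1}$ (or symmetrically only to $\bbH_{u_2}$), then Observation~\ref{obs:C1:well:defined} applied to $u_1$ gives $[K]_{\bbH_{u_1}}$ bounded near $K$, hence far from $\bbH_{u_2}$, so equal to $[K]_{\bbH_{u_1}\cup\bbH_{u_2}}$.

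\textbf{Main obstacle.} Subcase (iii), where $K$ is close to both $\bbH_{u_1}$ and $\bbH_{u_2}$ simultaneously, is the delicate one. Since $u_1$ and $u_2$ are distinct and not opposite (in our application they are two directions from $\cS$, hence linearly independent by Lemma~\ref{lem:directions}), the boundaries of the half-planes are non-parallel lines through the origin, so being close to both confines $K$ to a bounded neighbourhood of the tip of the cone $\bbR^2\setminus(\bbH_{u_1}\cup\bbH_{u_2})$. Combined with the bounded diameter, this leaves only finitely many possible $K$, for each of which I need to verify that $[K]_{\bbH_{u_1}\cup\bbH_{u_2}}$ is finite; the final constant is then the maximum distance across these finitely many cases. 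Finiteness is obtained by first applying Observation~\ref{obs:C1:well:defined} to $u_1$ to write $[K\cup\bbH_{u_1}]=\bbH_{u_1}\cup L$ with $L$ finite and close to $K$, and then showing that the further addition of $\bbH_{u_2}$ cannot produce infinitely many new infected sites: any such unbounded escape into the cone would, by a translation and compactness extraction around a far point, produce either a rule contained in $\bbH_{u_1}\cup\bbH_{u_2}$ (contradicting the stability hypothesis) or a translation-invariant infected configuration far from both half-planes generated by $|K|<\alpha$ seeds (contradicting Corollary~\ref{cor:crumbs:growth}).
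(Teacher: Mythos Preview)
Your overall plan matches the paper's proof closely: induction on $|K|$ with the same splitting argument to reduce to bounded-diameter $K$, then a case analysis according to the distance of $K$ from the two half-planes, reducing subcase~(ii) to Observation~\ref{obs:C1:well:defined} exactly as the paper does.

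The divergence is in subcase~(iii), and there your argument is both more complicated and not fully justified. After computing $[K\cup\bbH_{u_1}]=\bbH_{u_1}\cup L$, the set $L$ is finite but may well have more than $\alpha-1$ elements, so your appeal to Corollary~\ref{cor:crumbs:growth} in the second branch of the compactness dichotomy is not licensed. More seriously, the ``translation and compactness extraction'' step is delicate because the closure operation $[\cdot]$ is not continuous in the product topology: knowing that a far point is infected in $[L\cup\bbH_{u_1}\cup\bbH_{u_2}]$ does not immediately yield a contradiction by passing to a limit configuration. This branch would require substantially more care to make rigorous.

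The paper avoids all of this with a one-line observation: since $K$ lies in a bounded neighbourhood of the tip, one simply exhibits a stable superset. With $u'=(u_1+u_2)/2$, the region $\bbH_{u_1}\cup\bbH_{u_2}\cup\bbH_{u'}(C''u')$ is stable for $C''$ large enough to contain $K$; indeed, any $z\in U$ with $\langle z,u_1\rangle\ge 0$ and $\langle z,u_2\rangle\ge 0$ (which exists by the hypothesis $U\not\subset\bbH_{u_1}\cup\bbH_{u_2}$) also has $\langle z,u'\rangle\ge 0$, so $x+z$ remains outside the region whenever $x$ does. This immediately gives $[K]_{\bbH_{u_1}\cup\bbH_{u_2}}\subset\bbH_{u'}(C''u')\setminus(\bbH_{u_1}\cup\bbH_{u_2})$, which is bounded. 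You should replace your compactness sketch with this direct barrier.
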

\begin{proof}
We apply a similar induction to the one in the proof of Observation~\ref{obs:C1:well:defined}. The only difference is that we can no longer use translation invariance. If $d(K,\bbH_{u_2})>C(\cU,u_1,|K|)+O(1)$, by Observation~\ref{obs:C1:well:defined}, we have $[K]_{\bbH_{u_1}\cup\bbH_{u_2}}=[K]_{\bbH_{u_1}}$ and similarly for $u_1$ and $u_2$ interchanged. We can thus assume that $K$ is within distance $C'(\cU,u_1,u_2)$ from the origin. But then $[K\cup \bbH_{u_1}\cup\bbH_{u_2}]\subset\bbH_{u_1}\cup\bbH_{u_2}\cup\bbH_{u'}(C''(\cU,u_1,u_2)u')$, where $u'=(u_1+u_2)/2$, since the latter region is stable by the hypothesis on $u_1,u_2$.
\end{proof}

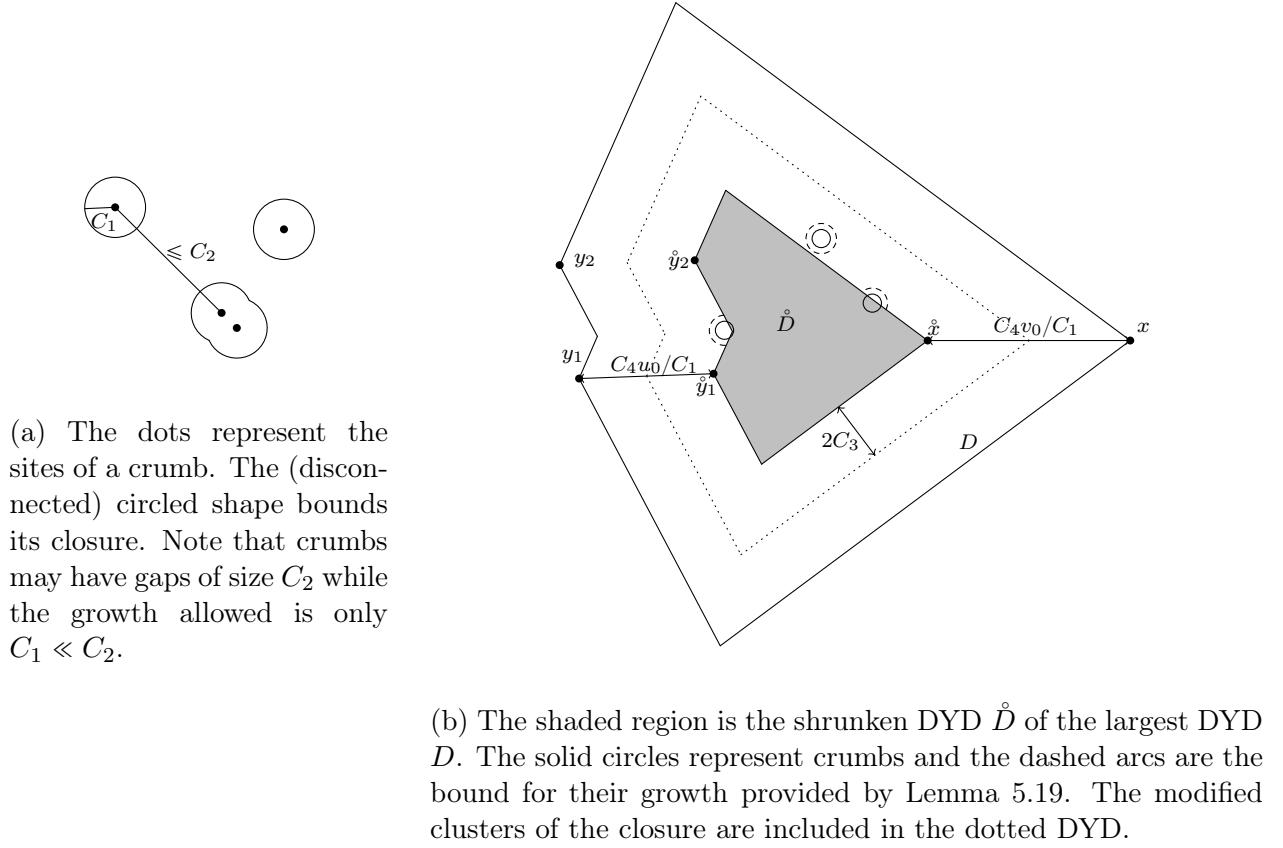
\begin{figure}[t]
\begin{center}
\begin{subfigure}{0.3\textwidth}
\begin{center}
\begin{tikzpicture}[line cap=round,line join=round,x=0.2cm,y=0.2cm]
\clip(-10.5,-2.5) rectangle (5.5,10.5);
\draw(3.1,6.54) circle (0.4cm);
\draw(-8,8) circle (0.4cm);
\draw (-10,7.91)-- (-8,8);
\draw (-8,8)-- (-1,1);
\draw [shift={(-1,1)}] plot[domain=0.42:4.29,variable=\t]({1*2*cos(\t r)+0*2*sin(\t r)},{0*2*cos(\t r)+1*2*sin(\t r)});
\draw [shift={(0,0)}] plot[domain=-2.72:1.15,variable=\t]({1*2*cos(\t r)+0*2*sin(\t r)},{0*2*cos(\t r)+1*2*sin(\t r)});
\begin{scriptsize}
\fill [color=black] (0,0) circle (1.5pt);
\fill [color=black] (-1,1) circle (1.5pt);
\fill [color=black] (3.1,6.54) circle (1.5pt);
\fill [color=black] (-8,8) circle (1.5pt);
\draw[color=black] (-8.7,7) node {$C_1$};
\draw[color=black] (-3,5) node {$\le C_2$};
\end{scriptsize}
\end{tikzpicture}
\end{center}
\subcaption{The dots represent the sites of a crumb. The (disconnected) circled shape bounds its closure. Note that crumbs may have gaps of size $C_2$ while the growth allowed is only $C_1\ll C_2$.}
\label{fig:crumbs}
\end{subfigure}
\quad
\begin{subfigure}{0.66\textwidth}
\begin{center}
\begin{tikzpicture}[line cap=round,line join=round,x=0.8cm,y=0.8cm]
\clip(-5,-5.5) rectangle (5,5.5);

\fill[fill=black,fill opacity=0.25] (-1.99,2.16) -- (-2.5,1) -- (-1.88,-0.18) -- (-2.19,-0.88) -- (-1.4,-2.38) -- (1.33,-0.33) -- cycle;
\draw (-1.99,2.16)-- (-2.5,1);
\draw (-2.5,1)-- (-1.88,-0.18);
\draw (-1.88,-0.18)-- (-2.19,-0.88);
\draw (-2.19,-0.88)-- (-1.4,-2.38);
\draw (-1.4,-2.38)-- (1.33,-0.33);
\draw (1.33,-0.33)-- (-1.99,2.16);
\draw(-2.01,-0.16) circle (0.12cm);
\draw [shift={(-2.01,-0.16)},dash pattern=on 2pt off 2pt]  plot[domain=1.61:4.84,variable=\t]({1*0.25*cos(\t r)+0*0.25*sin(\t r)},{0*0.25*cos(\t r)+1*0.25*sin(\t r)});
\draw(-0.42,1.36) circle (0.12cm);
\draw [dash pattern=on 2pt off 2pt] (-0.42,1.36) circle (0.2cm);
\draw(0.42,0.29) circle (0.12cm);
\draw [shift={(0.42,0.29)},dash pattern=on 2pt off 2pt]  plot[domain=-0.45:2.3,variable=\t]({1*0.25*cos(\t r)+0*0.25*sin(\t r)},{0*0.25*cos(\t r)+1*0.25*sin(\t r)});
\draw (4.66,-0.33)-- (-2.81,5.27);
\draw (-2.81,5.27)-- (-4.72,0.92);
\draw (-4.72,0.92)-- (-4.1,-0.26);
\draw (-4.1,-0.26)-- (-4.4,-0.96);
\draw (-4.4,-0.96)-- (-2.08,-5.39);
\draw (-2.08,-5.39)-- (4.66,-0.33);
\draw [dotted] (3,-0.33)-- (-2.4,3.72);
\draw [dotted] (-2.4,3.72)-- (-3.61,0.96);
\draw [dotted] (-3.61,0.96)-- (-2.99,-0.22);
\draw [dotted] (-2.99,-0.22)-- (-3.29,-0.92);
\draw [dotted] (-3.29,-0.92)-- (-1.74,-3.88);
\draw [dotted] (-1.74,-3.88)-- (3,-0.33);
\draw [<->] (-4.4,-0.96) -- (-2.19,-0.88);
\draw [<->] (4.66,-0.33) -- (1.33,-0.33);
\draw [<->] (-0.14,-1.43) -- (0.46,-2.23);
\begin{scriptsize}
\fill [color=black] (-2.19,-0.88) circle (1.5pt);
\draw[color=black] (-2.3,-1.1) node {$\ring y_1$};
\fill [color=black] (-2.5,1) circle (1.5pt);
\draw[color=black] (-2.75,1) node {$\ring y_2$};
\fill [color=black] (1.33,-0.33) circle (1.5pt);
\draw[color=black] (1.43,-0.12) node {$\ring x$};
\fill [color=black] (4.66,-0.33) circle (1.5pt);
\draw[color=black] (4.88,-0.12) node {$x$};
\fill [color=black] (-4.4,-0.96) circle (1.5pt);
\draw[color=black] (-4.5,-0.6) node {$y_1$};
\fill [color=black] (-4.72,0.92) circle (1.5pt);
\draw[color=black] (-4.3,1) node {$y_2$};
\draw[color=black] (-3.16,-0.74) node {$C_4 u_0/C_1$};
\draw[color=black] (3.12,-0.1) node {$C_4 v_0/C_1$};
\draw[color=black] (-0.1,-2) node {$2C_3$};
\draw[color=black] (-1,0) node {$\ring D$};
\draw[color=black] (2,-2) node {$D$};
\end{scriptsize}
\end{tikzpicture}
\end{center}
\subcaption{The shaded region is the shrunken DYD $\ring D$ of the largest DYD $D$. The solid circles represent crumbs and the dashed arcs are the bound for their growth provided by Lemma~\ref{lem:C1}. The modified clusters of the closure are included in the dotted DYD.}
\label{fig:closure}
\end{subfigure}
\caption{Illustrations of Corollary~\ref{cor:crumbs:growth}, Lemma~\ref{lem:C1} and Proposition~\ref{prop:closure}.}
\end{center}
\end{figure}

We next transform these results for infinite regions into a result for droplets. It states that a crumb next to a droplet cannot grow significantly (see Figure~\ref{fig:closure}).
\begin{lem}
\label{lem:C1}
Let $C_1$ be sufficiently large depending on $\cU$ and $\cS$. Let $D$ be a DYD at distance at least $C_3$ from $\partial$ or be a CDYD and let $\kappa$ be a crumb. Then $[\kappa]_{D\cup\partial}=[\kappa]_D$ is within distance $C_1$ of $\kappa$.
\end{lem}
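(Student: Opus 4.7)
The plan is to reduce the problem to a set $P_G$ with fewer than $\alpha$ sites and then apply Observation~\ref{obs:C1:well:defined} and Corollary~\ref{cor:C1:well:defined}. Since $G\subset[P_G]$ for a minimal set $P_G$ with $|P_G|=\alpha-1$ (which, by the proof of Corollary~\ref{cor:crumbs:growth}, can be chosen within distance $C_1/2$ of $G$), we have $[G\cup D\cup\partial]\subset[P_G\cup D\cup\partial]$. It thus suffices to show that $[P_G]_{D\cup\partial}$ is within distance $C_1/2$ of $P_G$. Set $S:=D\cup\partial$. This is a stable set whose boundary within $\Lambda$ consists of edges perpendicular to directions in $\cS\cup\{u',u_1',u_2'\}$, all of which are rational, non-semi-isolated stable, with difficulty at least $\alpha$: for $\cS$ by Lemma~\ref{lem:directions}; for $u',u_1',u_2'$ because they lie in the strongly stable open interval $I''$, hence have infinite difficulty. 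Moreover $\bbH_{u_1}\cup\bbH_{u_2}$ is stable by Lemma~\ref{lem:directions}.

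The bound on $[P_G]_S$ would be proved by induction on $|P_G|$, in the spirit of the proof of Observation~\ref{obs:C1:well:defined}. The splitting step is routine: if $P_G=P_1\sqcup P_2$ with $d(P_1,P_2)$ exceeding twice the sought bound plus the maximum rule diameter, the inductive hypothesis applied to each $P_i$ gives the result. Otherwise $P_G$ has bounded diameter, and two sub-cases arise. If $P_G$ is at distance larger than the maximum extent of $[P_G]$ from $S$, then $[P_G\cup S]\setminus S=[P_G]$, which by the first assertion of Corollary~\ref{cor:crumbs:growth} is within $C_1/(6\alpha)$ of $P_G$. Otherwise $P_G$ is close to $S$, and in a ball around $P_G$ the set $S$ has one of three local structures: a half-plane $\bbH_w$ (at an edge of $S$), an intersection $\bbH_{w_a}\cap\bbH_{w_b}$ (at a convex corner), or a union $\bbH_{u_1}\cup\bbH_{u_2}$ (at a concave corner of $D$, which uses $u_1,u_2$ by Definition~\ref{def:DYD}). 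The first two cases are handled by Observation~\ref{obs:C1:well:defined} (the intersection via $\bbH_{w_a}\cap\bbH_{w_b}\subset\bbH_{w_a}$), and the third by Corollary~\ref{cor:C1:well:defined}, giving a bound on $[P_G\cup S]\setminus S$ depending only on $\cU$ and $\cS$, at most $C_1/2$ for $C_1$ large. The equality $[G]_{D\cup\partial}=[G]_D$ then follows: $[G]_{D\cup\partial}$ being within $C_1$ of $G$ avoids $\partial$ (since $d(G,\partial)>C_2\gg C_1$), so a causal-history argument shows that each of its sites is already in $[G]_D$; and the same bound applied without $\partial$ places $[G]_D$ in $\Lambda$, yielding the reverse inclusion.

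The main obstacle I anticipate is the passage from the local bound (via Observation~\ref{obs:C1:well:defined} or Corollary~\ref{cor:C1:well:defined}, which apply to infinite half-planes or unions thereof) to a bound on the global closure $[P_G\cup S]\setminus S$ in the close-to-$S$ sub-case, given that $S$ is a finite stable droplet rather than a single half-plane. One must verify that growth cannot escape the local neighborhood of $P_G$ by propagating along $S$'s boundary. This relies on the stability of $S$: any far-away growth would require independent seeds in $S^c$, but the only such seeds are contained in $P_G$, which is confined to a bounded neighborhood, so the splitting step of the induction captures precisely this phenomenon.
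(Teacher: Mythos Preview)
Your reduction to a minimal $P_G$ with $|P_G|<\alpha$ lying within $C_1/2$ of $G$, together with the identification of Observation~\ref{obs:C1:well:defined} and Corollary~\ref{cor:C1:well:defined} as the key inputs, is exactly the paper's approach (the paper packages it as a citation of \cite[Lemma~6.10]{Bollobas14}). Your derivation of the equality $[G]_{D\cup\partial}=[G]_D$ at the end is also fine.

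However, the step you yourself flag as the main obstacle is a genuine gap, and the resolution you propose does not close it. The problem is not long-range propagation from isolated seeds (which the splitting step would indeed handle); it is that your trichotomy ``half-plane / convex corner / concave corner'' need not hold in any ball around $P_G$. The rugged $u_1,u_2$-side of $D$ may have arbitrarily short edges, so many concave corners (and possibly a $v_j$-edge as well) can sit within that ball simultaneously. Moreover, even when a single factor $F$ is present, Observation~\ref{obs:C1:well:defined} and Corollary~\ref{cor:C1:well:defined} bound $[P_G\cup F]\setminus F$, not $[P_G\cup S]\setminus S$; since $S\subset F$, subtracting the smaller set $S$ may a priori leave sites in $F\setminus S$ uncontrolled. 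Your one-line treatment of the convex-corner case via $\bbH_{w_a}\cap\bbH_{w_b}\subset\bbH_{w_a}$ already runs into this.

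The clean fix bypasses both your induction on $|P_G|$ and the local case analysis. By~\eqref{eq:def:DYD}, $D$ (or, in the CDYD case, the set $R$ satisfying $D\cup\partial=R\cup\partial$) is an intersection $\bigcap_F F$ over finitely many factors, each $F$ being either $\bbH_w(\,\cdot\,)$ with $w\in\cS$ or $\bbH_{u_1}(x_i)\cup\bbH_{u_2}(x_i)$. Monotonicity gives $[P_G\cup D]\subset\bigcap_F[P_G\cup F]=\bigcap_F(F\cup N_F)$, where by Observation~\ref{obs:C1:well:defined} and Corollary~\ref{cor:C1:well:defined} each $N_F$ is within a constant $C(\cU,\cS)$ of $P_G$. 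Any point outside $\bigcup_F N_F$ then lies in every $F$, hence in $D$; so $[P_G]_D\subset\bigcup_F N_F$, which is within $C(\cU,\cS)$ of $P_G$. This is precisely the content of \cite[Lemma~6.10]{Bollobas14}. For the CDYD case the paper observes that $D\cup\partial$ is an ``infinite DYD'' (an intersection of $\bbH_{u_1}\cup\bbH_{u_2}$-factors only), approximated by an exhausting sequence of finite DYD, so the same factorisation applies; equivalently one may note, as you implicitly do, that $d(P_G,\partial)>C_2-C_1/2$ forces only the $u_1,u_2$-factors of $D$ to be relevant.
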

\begin{proof}
Assume that $D$ is a DYD at distance at least $C_3$ from $\partial$. The proof of~\cite[Lemma~6.10]{Bollobas14} applies using \eqref{eq:def:DYD}, Observation~\ref{obs:C1:well:defined}, Corollary~\ref{cor:C1:well:defined} and the arguments in the proof of Corollary \ref{cor:crumbs:growth} to give the result for $[\kappa]_D$, which is therefore at distance at least $C_2-C_1$ from $\partial$ since $d(\kappa,\partial)\ge C_2$, so that in fact $[\kappa]_D=[\kappa]_{D\cup\partial}$.

Assume next that $D$ is a CDYD. Then actually $D\cup\partial$ can be viewed as a DYD on the entire plane without boundary specified by an infinite number of vectors $x_i$, so that we are in the previous case. In order to avoid introducing the corresponding notion of infinite DYD, one can consider an increasing exhaustive sequence of DYD $D_i$ converging to $D\cup\partial$ in the product topology and apply the previous result for $[\kappa]_{D_i}$, which will thereby apply to $D\cup\partial$. Finally, $[\kappa]_D=[\kappa]_{D\cup\partial}$ follows, since $d([\kappa]_{D\cup\partial},\partial)\ge C_2-C_1$.
\end{proof}

The next proposition is key to making the output of the algorithm essentially invariant under the KCM dynamics without having to pay for the fact that the closure for the bootstrap percolation dynamics of infections at equilibrium is not at all at equilibrium itself.
The proof is illustrated in Figure~\ref{fig:closure}.
\begin{prop}[Closure]
\label{prop:closure}
Let $K$ be a finite set and $\cD'$ be the collection of droplets given by the modified droplet algorithm with input $[K]_\partial$. Let $\cD$ be the output of the droplet algorithm for $K$. Then
\[\forall D'\in\cD'\,\exists D\in\cD,D'\subset D.\]
\end{prop}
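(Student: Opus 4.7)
The plan is to trace each $\Gamma'$-connected component of $[K]_\partial$ back to the structure of $K$ and show that it either sits inside a single droplet $D\in\cD$ or is a modified crumb, hence irrelevant for $\cD'$. Write $\bar D:=\bigcup_{D\in\cD}D$.

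First, I would establish a localisation of the closure. Applied to each droplet $D\in\cD$ and each crumb $G$ of $K$, Lemma~\ref{lem:C1} shows that $[G]_{D\cup\partial}$ stays within distance $C_1$ of $G$, and iterating over the (finitely many relevant) droplets yields the same for $[G]_{\bar D\cup\partial}$. Since distinct crumbs of $K$ are at mutual distance greater than $C_2$ and at distance greater than $C_2$ from the cluster sites of $K$, while $[K\cap\bar D]_\partial\subset\bar D$ by the stability of $\bar D\cup\partial$, this gives a decomposition of $[K]_\partial$ into $[K\cap\bar D]_\partial\subset\bar D$ together with the $C_1$-neighbourhoods of the crumbs $G\not\subset\bar D$, whose constituents are pairwise separated by distance at least $C_2-2C_1>C_2'$ thanks to our scale separation.

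Consequently every $\Gamma'$-connected component of $[K]_\partial$ sits either entirely inside one $D\in\cD$, or entirely inside the $C_1$-neighbourhood of a single crumb $G\not\subset\bar D$. In the latter case, the arguments in the proof of Corollary~\ref{cor:crumbs:growth} show that the component is contained in $[P_G]$ with $|P_G|=\alpha-1$, and it is at distance greater than $C_2'$ from $\partial$ since $d(G,\partial)>C_2$; hence it is a modified crumb, produces no modified cluster, and contributes nothing to $\cD'$.

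It remains to handle modified clusters $C'$ lying in some $D\in\cD$, and the key claim is $Q'(C')\subset D$. Granting this, the minimality and associativity of the span operation (Lemma~\ref{lem:span} and Remark~\ref{rem:algo}) ensure that the modified algorithm restricted to $[K]_\partial\cap D$ outputs droplets contained in $D$, which together with the non-interaction across different droplets from the first step proves the proposition. I expect this geometric claim to be the main obstacle: since $D$ is the span of the quadrilaterals $Q(C_i)$ over the clusters $C_i\subset D$ of $K$, each padding $C_i$ by $C_4$ along the $\cS$-directions, and since $C_4'+C_3\ll C_4$, it suffices to locate $C'$ within distance $O(C_3)$ of some such $C_i$. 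This requires a careful geometric argument exploiting both $\diam C'\le C_3$ and a fine analysis of how $[K\cap D]_\partial$ can spread inside the stable droplet $D$: a $\Gamma'$-connected lump of new infections of diameter up to $C_3$ lying far from every original cluster would force the combined closure of distant clusters to propagate beyond the $C_4$-padding of any individual $Q(C_i)$, which the scale separation of our constants is designed to rule out.
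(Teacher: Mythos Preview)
Your overall architecture is right: separate $[K]_\partial$ into a ``droplet part'' and a ``crumb part'', show the crumb part yields only modified crumbs, and show the modified clusters sit deep enough inside the droplets that their $Q'(C')$ do not protrude. However, the argument as written has a genuine gap at its central step, and several earlier steps are also unjustified; all of them stem from the same missing idea.

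The claim ``$[K\cap\bar D]_\partial\subset\bar D$ by the stability of $\bar D\cup\partial$'' is not valid: the droplets in $\cD$ are merely disjoint, not well separated, so $\bar D\cup\partial$ need not be stable (an update rule can straddle two nearby droplets). For the same reason your application of Lemma~\ref{lem:C1} to the original DYD in $\cD$ is illegitimate, since that lemma requires the DYD to be at distance at least $C_3$ from $\partial$, and droplets in $\cD$ can be arbitrarily close to $\partial$; the ``iteration over droplets'' for $[G]_{\bar D\cup\partial}$ fails for the same proximity reason. Most importantly, your sufficient condition ``$C'$ within distance $O(C_3)$ of some original cluster $C_i$'' is in general \emph{false}: if two clusters $C_1,C_2$ have overlapping quadrilaterals and merge into a large $D$, their joint closure can fill a region of $D$ far from both, producing modified clusters at distance $\gg C_3$ from every $C_i$. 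The scale separation of the constants does not prevent this.

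What the paper does, and what your proposal is missing, is to introduce \emph{shrunken droplets} $\ring D$, obtained by pushing every side of $D$ inward by $C_4/C_1$. This single device fixes all the issues at once: the $\ring D$ are at mutual distance $\ge C_4/C_1\gg C_2$ and at distance $\ge C_4/C_1\gg C_3$ from $\partial$, so Lemma~\ref{lem:C1} applies and each crumb interacts with at most one $\ring D$; the union $\ring D_0$ is genuinely stable; and, crucially, every cluster $C_i$ lies in $\ring D_0$ because $Q(C_i)$ pads $C_i$ by $C_4$ in each $\cS$-direction. Hence $K\subset\ring D_0\cup G_0$ and $[K]_\partial\subset \ring D_0\cup[G_0]\cup\bigcup_{G,D}[G]_{\ring D}$, which places every modified cluster within $2C_3$ of $\ring D_0$, not of an individual $C_i$. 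Since $\diam Q'(C')=\Theta(C_4')\ll C_4/C_1=d(\ring D_0,\Lambda\setminus D_0)$, this gives $Q'(C')\subset D_0\cup\partial$. In short, the correct intermediate claim is ``$C'$ is close to $\ring D_0$'', not ``$C'$ is close to some $C_i$''.
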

\begin{proof}
Let $\cK$ be the set of crumbs for $K$. Set $\kappa_0=\bigcup_{\kappa\in\cK} \kappa$.\\

\par\textbf{Claim 1.} For each crumb $\kappa\in\cK$ its closure $[\kappa]=[\kappa]_\partial$ consists of at most $\alpha-1$ modified crumbs of $[\kappa]$ all contained within distance $C_1$ from $\kappa$. 
\begin{proof}[Proof of Claim 1]
There exists a set $P_\kappa$ as in Definition~\ref{def:crumb}, such that $[P_\kappa]\supset \kappa$ and thus $[P_\kappa]\supset[\kappa]$, which proves that all connected components of $[\kappa]$ for $\Gamma'$ are modified crumbs. The fact that $[\kappa]$ is within distance $C_1$ of $\kappa$ (and thus at distance at least $C_2'$ from $\partial$) was proved in Corollary~\ref{cor:crumbs:growth}, which also shows that $[\kappa]=[\kappa]_\partial$, since $\kappa$ is at distance more than $C_2$ from $\partial$.
\end{proof}
We can thus define $\cK'(\kappa)$ to be the set of modified crumbs of $[\kappa]_\partial$, so that their union is disjoint and equal to $[\kappa]_\partial$. Moreover, crumbs in $\cK$ are at distance at least $C_2$ from each other, so for any two of them $\kappa_1\neq \kappa_2$ we have that any $\kappa_1'\in\cK'(\kappa_1)$ and $\kappa_2'\in\cK'(\kappa_2)$ are at distance at least $C_2-2C_1\gg C_2'$ and also at such distance from $\partial$, so that $[\kappa_0]_\partial=\bigcup_{\kappa\in\cK} [\kappa]_\partial$ has no modified cluster and consists of modified crumbs at distance at most $C_1$ from $\kappa_0$.

For a droplet $D\in\cD$ consider the set of vectors $Y$ and $x$ ($x$ is absent for CDYD) defining it. Then define $\ring Y=Y+C_4u_0/C_1$ and $\ring x=x+C_4v_0/C_1$, where $u_0\in \bbR^2$ is the vector such that $\<u_0,u_1\>=\<u_0,u_2\>=-1$ and $v_0$ is defined identically in terms of $v_1$ and $v_2$. We denote by $\ring D$ the droplet defined by $\ring Y$ and $\ring x$ and call it a \emph{shrunken droplet}. Let $D_0=\bigcup_{D\in\cD} D$ and $\ring D_0=\bigcup_{D\in\cD}\ring D$. It is clear that $\ring D$ is at distance at least $C_4/C_1$ from $\Lambda\setminus D$ for all droplets $D$. In particular, all shrunken droplets are at distance at least $C_4/C_1$ from each other and shrunken DYD are at distance at least $C_4/C_1$ from $\partial$, so that Lemma~\ref{lem:C1} applies to them and $[\ring D_0]_\partial=\ring D_0$.\\

\par\textbf{Claim 2.}
$\ring D_0\cup \kappa_0\supset K$.
\begin{proof}[Proof of Claim 2]
Note that it is enough to prove that the clusters of $K$ are contained in $\ring D_0$. Assume that there exists $a\in K\setminus \ring D_0$ and $a\in C$ for some cluster. Then, $Q(C) \cap \Lambda$ is contained in some $D \in \mathcal{D}$, which is defined by $Y$ and $x$ ($x$ is absent for CDYD). Then since $a \not \in \ring D$, either for all $\ring y_i\in \ring Y$ we have $a\not\in \bbH_{u_1}(\ring y_i)\cap\bbH_{u_2}(\ring y_i)$ or $a\not\in\bbH_{v_1}(\ring x)\cap\bbH_{v_2}(\ring x)$. In the former case, $a-C_4 u_0/C_1\not\in \bbH_{u_1}(y_i)\cap\bbH_{u_2}(y_i)$ for all $y_i\in Y$. However, $Q(C)$ contains the ball of radius $C_4$ centered at $a$ and $\|u_0\|=O(1)$, so we get a contradiction. If $a\not\in\bbH_{v_1}(\ring x)\cap\bbH_{v_2}(\ring x)$, the first point on the segment from $a$ to $a-C_4 v_0/C_1$ that is not in $D$ is in $\Lambda$ and in $Q(C)$, hence a contradiction.
\end{proof}

\par\textbf{Claim 3.}
The set $[K]_\partial\setminus [\kappa_0]_\partial$ is within distance $C_3$ of $\ring D_0$.
\begin{proof}[Proof of Claim 3]
By Claim~2 we have $K_0=\ring D_0\cup \kappa_0\supset K$. It then clearly suffices to prove that $[K_0]_\partial\setminus[\kappa_0]_\partial$ is within distance $C_3$ of $\ring D_0$.

Consider a crumb $\kappa\in\cK$ at distance at most $C_2$ from $\ring D_0$, so at distance at most $C_2$ from a shrunken droplet $\ring D$ and necessarily at distance at least $C_4/C_1-C_2-C_3$ from any other shrunken droplet and from $\partial$ if $D$ is a DYD. By Lemma~\ref{lem:C1} $[\kappa]_{\ring D}=[\kappa]_{\ring D\cup\partial}$ is within distance $C_1$ of $\kappa$. Hence,
\begin{equation}
\label{eq:[K_0]:decomposition}
[K_0\cup\partial]=\ring D_0\cup\partial\cup[\kappa_0]\cup\bigcup_{\kappa,D}[\kappa]_{\ring D},
\end{equation}
where the last union is on couples $(\kappa,D)$ as above. Indeed, all $[\kappa]_{\ring D}$ and $[\kappa]$ (for different $\kappa$) are at distance at least $C_2-2C_1$ from each other and from $\ring D_0\setminus \ring D$ (by the reasoning above), so for each site of $\Lambda$ the intersection of the ball of radius $O(1)$ centered at it with the set on the right-hand side of~\eqref{eq:[K_0]:decomposition} coincides with the intersection with one of the sets $[\kappa\cup \ring D]$, $[\kappa]$ or $\ring D_0\cup\partial$, which are all stable, so no infections occur, which proves~\eqref{eq:[K_0]:decomposition}.

The claim follows easily from~\eqref{eq:[K_0]:decomposition}, since for every couple $\kappa,D$ the set $[\kappa]_{\ring D}$ is within distance $C_1$ of $\kappa$, which is itself at distance at most $C_2$ from $\ring D_0$, and $\kappa$ has diameter much smaller than $C_3$ by Corollary~\ref{cor:crumbs:growth}.
\end{proof}

Let $C'$ be a modified cluster of $[K]_\partial$ and assume for a contradiction that $C'\subset[\kappa_0]_\partial$. From Definition~\ref{def:cluster} we get that $C'$ is also a modified cluster of $[\kappa_0]_\partial$, but this is a contradiction, since $[\kappa_0]_\partial$ only consists of modified crumbs.

Since any modified cluster $C'$ of $[K]_\partial$ has diameter at most $C_3$ (by Definition~\ref{def:cluster}) and intersects $[K]_\partial\setminus[\kappa_0]_\partial$, which is within distance $C_3$ of $\ring D_0$ by Claim 3, we get that $C'$ is within distance $2C_3$ of $\ring D_0$. Therefore, $\bigcup_{C'\in\cC'([K]_\partial)} Q'(C')\subset D_0\cup\partial$, where the union is over all modified clusters of $[K]_\partial$, since $\diam (Q'(C'))\ll C_4/C_1\le d(\ring D_0,\Lambda\setminus D_0)$. As $\cD$ is the output of the droplet algorithm, $D_0$ is the union of disjoint DYD non-intersecting $\partial$ and CDYD, so it necessarily contains $\bigcup_{D'\in\cD'}D'$ (see Remark~\ref{rem:algo}), which concludes the proof.
\end{proof}

\begin{rem}
\label{rem:generalisations:algo}
It should be noted that the algorithm is more easily and naturally defined with no boundary, but that will not be sufficient for our purposes. However, this `free' algorithm is trivially obtained as a specialisation of ours. It is also possible to deal with more general boundaries, with infinite input sets, as well as with droplets defined by more directions and possibly with several rugged sides.
\end{rem}

\section{Renormalised East dynamics}
\label{sec:East}

In this section we map the original dynamics into an East one and conclude the proof of our main result. In Section~\ref{sec:geo} we introduce the necessary notation for the relevant geometry. In Section~\ref{sec:arrows} we consider a renormalised dynamics on the slices of Figure~\ref{fig:domain} by algorithmically selecting certain modified  spanned droplets of size $\Omega(1/q^\alpha)$. In Section~\ref{sec:eta} we further renormalise to recover an exact East dynamics where $q$ is replaced by $\qe$ corresponding to the probability of spanning such a droplet. Finally, in Section~\ref{sec:proof} we prove Theorem~\ref{th:main} roughly as in~\cite{Mareche20Duarte}.

\subsection{Geometric setup}
\label{sec:geo}
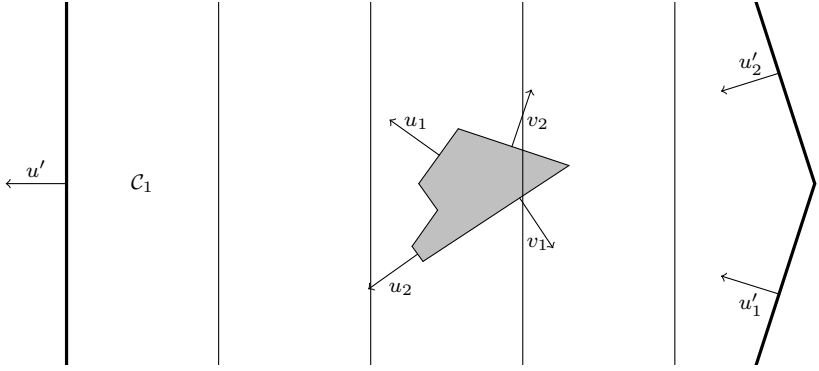
\begin{figure}
\floatbox[{\capbeside\thisfloatsetup{capbesideposition={right,top}}}]{figure}[\FBwidth]{
\begin{tikzpicture}[line cap=round,line join=round,x=0.8cm,y=0.8cm]
\clip(-7,-3) rectangle (6.4,3);
\fill[fill=black,fill opacity=0.25] (2.26,0.3) -- (0.44,0.91) -- (-0.21,0) -- (0.1,-0.44) -- (-0.32,-1.04) -- (-0.14,-1.29) -- cycle;
\fill[line width=0pt,color=white,fill=white,fill opacity=1.0] (-6,38.38) -- (-7,5) -- (-7,-5) -- (-6,-38.38) -- cycle;
\fill[line width=0pt,color=white,fill=white,fill opacity=1.0] (14.75,0.06) -- (-6,38.38) -- (6.3,0) -- cycle;
\fill[line width=0pt,color=white,fill=white,fill opacity=1.0] (14.75,0.06) -- (-6,-38.38) -- (6.3,0) -- cycle;
\draw (2.26,0.3)-- (0.44,0.91);
\draw (0.44,0.91)-- (-0.21,0);
\draw (-0.21,0)-- (0.1,-0.44);
\draw (0.1,-0.44)-- (-0.32,-1.04);
\draw (-0.32,-1.04)-- (-0.14,-1.29);
\draw (-0.14,-1.29)-- (2.26,0.3);
\draw [->] (0.13,0.47) -- (-0.69,1.06);
\draw [->] (-0.23,-1.17) -- (-1.04,-1.75);
\draw [->] (1.45,-0.24) -- (2,-1.07);
\draw [->] (1.32,0.61) -- (1.64,1.56);
\draw [->] (-6,0) -- (-7,0);
\draw [->] (5.71,1.83) -- (4.76,1.53);
\draw [->] (5.71,-1.83) -- (4.76,-1.53);
\draw[very thick] (-6,38.38)-- (-6,-38.38);
\draw[very thick] (-6,38.38)-- (6.3,0);
\draw[very thick] (-6,-38.38)-- (6.3,0);
\draw (-3.5,38.38)-- (-3.5,-38.38);
\draw (-1,38.38)-- (-1,-38.38);
\draw (1.5,38.38)-- (1.5,-38.38);
\draw (4,38.38)-- (4,-38.38);
\begin{scriptsize}
\draw[color=black] (-0.25,1) node {$u_1$};
\draw[color=black] (-0.5,-1.75) node {$u_2$};
\draw[color=black] (1.75,-1) node {$v_1$};
\draw[color=black] (1.75,1) node {$v_2$};
\draw[color=black] (-6.5,0.25) node {$u'$};
\draw[color=black] (5.25,2) node {$u'_2$};
\draw[color=black] (5.25,-2) node {$u'_1$};
\draw[color=black] (-4.75,0) node {$\cC_1$};
\end{scriptsize}
\end{tikzpicture}}
{\caption{The domain $V$ is the thickened triangle, a portion of which is displayed. Solid lines separate columns $\cC_i$. Inside the domain is drawn a DYD, which witnesses $\Phi(\omega)_3=\uparrow$.}
\label{fig:domain}}
\end{figure}

Let us start by defining the domain $V$ we will work in, recalling the notation from Lemma~\ref{lem:directions}. Roughly speaking, $V$ is an isosceles triangle with height $e^{1/(C_5q^\alpha)}$ directed by $u'$ (see Figure~\ref{fig:domain}). It is divided into `columns' $\cC_i$ perpendicular to $u'$ of width roughly $1/q^\alpha$, so that the origin of $\bbZ^2$ is in the middle of the last column, close to the tip of $V$.

More formally, set $L=1/(C_5q^{\alpha})$ and let $\iota$ be the smallest $x\ge1$ such that the site $\frac{x}{2q^\alpha}u'$ is in $\bbZ^2$, so that $\iota=1+O(q^\alpha)$. This way our columns will have width $\iota/q^{\alpha}$ and be separated along rational lines. We define the domain
\[V= \bbH_{u'}(e^Lu')\setminus \left(\bbH_{u_2'}(-\iota/(2q^\alpha)u')\cup\bbH_{u'_1}(-\iota/(2q^\alpha)u')\right).\]
Let us choose $C_5$ so that half the number of columns
\[N=e^Lq^\alpha/(2\iota)+1/4=e^Lq^\alpha(1/2+O(q^\alpha))\]
is an integer. We then partition the domain $V=\bigcup_{i=1}^{2N}\cC_i$ into columns with
\[\cC_{i}=\{x\in V \colon e^L-\iota(i-1)/q^\alpha > \<x,u'\>\ge e^L-\iota i/q^\alpha\},\]
so that $0$ is in the middle of $\cC_{2N}$ and $e^L u'\in\bbZ^2$. We shall refer to $\cC_i$ as the \emph{$i$-th column}. Finally, define the half-plane containing $\cC_{i+1}$, but not intersecting $\cC_{i}$
\[\bbH_i=\bbH_{u'}((e^L-\iota i/q^\alpha)u')\]
and the natural boundary for $\cC_i$
\[\partial_i=\bbH_i\cup \bar\partial,\]
obtained by considering $\cC_{j},j \geq i+1$ as fully infected, where
\[\bar\partial=\bbH_{u_2'}(-\iota/(2q^\alpha)u')\cup\bbH_{u'_1}(-\iota/(2q^\alpha)u').\]
Note that these boundaries
 are of the form considered in Section~\ref{sec:droplets}.

\subsection{Arrow variables}
\label{sec:arrows}
Let $\omega\in\Omega$. We will now define a collection of arrow variables which depend only on the restriction of $\omega$ to $V$. We naturally identify the restriction of $\omega$ to $V$ with the subset of $V$ where $\omega$ is $0$ and we use the notation $\omega=\varnothing$ to indicate that all sites are filled (healthy) in $V$, namely $\omega_x=1$ for all $x\in V$.  Let $\omega^{(0)}=\omega\cap V$. We define the position of the first \emph{up-arrow} as the smallest index $i_1(\omega)\in\{1,2,\dots,2N\}$ such that there is a modified spanned droplet of size at least $L$ for $[\omega^{(0)}]_{\partial_{i_1(\omega)}}$ with boundary $\partial_{i_1(\omega)}$. If no such $i_1$ exists, we say that there are no up-arrows and set $i_1(\omega)=\infty$. We further denote $\omega^{(1)}=\omega^{(0)}\cap\bbH_{i_1(\omega)}$ as soon as $i_1(\omega)<\infty$, while otherwise $\omega^{(1)}=\varnothing$.

We define the set   $I(\omega)=\{i_1(\omega),i_2(\omega),\dots\}\subset\{1,\dots,2N\}$
containing the positions of up-arrows recursively as follows. If there are no up-arrows, then $I=\varnothing$. Otherwise, we set $I(\omega)=\{i_1(\omega)\}\cup I(\omega^{(1)})$ and $\omega^{(k)}=(\omega^{(k-1)})^{(1)}$, which defines $\omega^{(k)}$ for all $k$. Let us note that if $i_1(\omega)\neq\infty$, then $i_1(\omega)<i_1(\omega^{(1)})$, since by definition $[\omega^{(1)}]_{\partial_{i_1(\omega)}}=\varnothing$.
Finally, we define $\Phi(\omega)\in\{\uparrow,\downarrow\}^{\{1,\dots,2N\}}$ 
as
\begin{equation*}
\Phi(\omega)_k = 
\begin{cases}
\uparrow &\text{ if }k\in I(\omega),\\
\downarrow &\text{ otherwise.}  
\end{cases}
\end{equation*}

The next Lemma states that the probability to find at least one up-arrow decays as
\[\qe= e^{-L}.\]
\begin{lem}\label{lem:no:active:column}
\[\mu(i_1<\infty)\le\qe.\]
\end{lem}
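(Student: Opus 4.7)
The plan is to bound $\mu(i_1<\infty)$ by a union bound over the $2N$ column indices and over the possible large spanned droplets, transferring the statement about modified spanned droplets for the closure $[\omega^{(0)}]_{\partial_i}$ to a statement about (ordinary) spanned droplets for $\omega^{(0)}$ itself via the Closure Proposition, and then applying the Aizenman--Lebowitz Lemma together with the exponential decay estimate.

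Fix $i\in\{1,\dots,2N\}$ and assume there exists a modified spanned droplet $D^*$ of size at least $L$ for $[\omega^{(0)}]_{\partial_i}$ with boundary $\partial_i$. First I would invoke Proposition~\ref{prop:closure} with $K=\omega^{(0)}$ and $\partial=\partial_i$: $D^*$ belongs to the collection produced by the modified algorithm on $[\omega^{(0)}]_{\partial_i}$, so it is contained in some droplet $D\in\cD$ of the (ordinary) droplet algorithm applied to $\omega^{(0)}$ with the same boundary. In particular $|D|\ge|D^*|\ge L$, and since every droplet appearing in the algorithm is spanned, $D$ is a spanned droplet for $\omega^{(0)}$ with boundary $\partial_i$. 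Because $L=1/(C_5 q^\alpha)\ge C_4^2$ for $q$ sufficiently small, the Aizenman--Lebowitz Lemma~\ref{lem:AL} applied with $k=L$ produces a spanned droplet $D'$ with $L\le|D'|\le 2L=2/(C_5 q^\alpha)$. Hence
\[\{i_1(\omega)\le i\textrm{ and hasn't triggered earlier}\}\subset\{\exists\,\textrm{spanned droplet } D'\textrm{ for }\omega^{(0)}\textrm{ w.r.t. }\partial_i\textrm{ with }L\le|D'|\le 2L\}.\]

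Next I would estimate the right-hand side by a union bound. For each possible $D'$, Lemma~\ref{lem:exp:decay:diam} is applicable since $|D'|\le 2/(C_5q^\alpha)$, giving $\mu(D'\textrm{ is spanned})<\exp(-C_4|D'|)\le\exp(-C_4 L)$. The number of candidate $D'$ is controlled as follows: each such $D'$ must contain an initial infection, hence a site of $V$; by Observation~\ref{obs:entropy} the number of discretised droplets of diameter at most $O(L)$ (recall that for DYD, diameter is $\Theta(|D'|)$, and for CDYD it equals $C_1|D'|\le 2C_1 L$) containing a fixed point is at most $c^{O(L)}$ for a constant $c$ depending only on $\cS$. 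Since $|V\cap\bbZ^2|=e^{O(L)}$, the total count is $e^{O(L)}\cdot c^{O(L)}=e^{O(L)}$ with constants depending only on $\cS$ and $\cU$.

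Finally, summing over $i\in\{1,\dots,2N\}$, and noting that $2N=e^L q^\alpha(1+o(1))\le e^{O(L)}$, I get
\[\mu(i_1<\infty)\le 2N\cdot e^{O(L)}\cdot\exp(-C_4 L)=\exp\bigl(-(C_4-O(1))L\bigr).\]
Since the implicit $O(1)$ depends only on $\cS$ and $\cU$, the choice $1\ll C_4\ll C_5$ ensures $C_4-O(1)\ge 1$, which yields the desired bound $\exp(-L)=\qe$. The only non-routine point is the use of the Closure Proposition to pass from the closure of $\omega^{(0)}$ (which is very far from equilibrium) back to $\omega^{(0)}$ itself, for which the simple probability estimates of Section~\ref{sec:droplets} apply; the rest is a standard entropy vs.\ energy computation of the type used in critical bootstrap percolation.
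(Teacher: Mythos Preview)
Your approach is the same as the paper's: union bound over the $2N$ columns, Closure Proposition to pass from a modified spanned droplet for $[\omega^{(0)}]_{\partial_i}$ to an (ordinary) spanned droplet for $\omega^{(0)}$, Aizenman--Lebowitz to pin down the size, then Observation~\ref{obs:entropy} versus Lemma~\ref{lem:exp:decay:diam}.

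There is one small technical slip. After the Closure Proposition you do not in general get $|D|\ge|D^*|$: if $D^*$ is a DYD and the containing $D\in\cD$ is a CDYD, then $|D|=\diam(D)/C_1\ge\Theta(|D^*|)/C_1$, so you only have $|D|\ge L/C_1$ (this is exactly what the paper writes). Consequently you cannot apply Lemma~\ref{lem:AL} with $k=L$, since that lemma requires $k\le|D|/C_1$; the paper takes $k=L/C_1^2$, producing a connected spanned $D'$ with $L/C_1^2\le|D'|\le 2L/C_1^2$. This costs nothing in the end: the exponential decay becomes $\exp(-C_4L/C_1^2)$, and since $C_4\gg C_1$ in the hierarchy of constants this still dominates the entropy $|V|\cdot c^{O(L)}$ and the factor $2N$, yielding $\mu(i_1<\infty)\le\qe$. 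A related minor point: because CDYD have diameter $C_1|D'|$, the implicit constant in your entropy bound $c^{O(L)}$ depends on $C_1$, which strictly speaking violates the paper's convention for $O(\cdot)$; again this is harmless for the same reason.
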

\begin{proof}
Fix $1\le i\le 2N$ and consider the event $i_1=i$. It is clearly included in the event $E_i$ that there is a modified spanned droplet of size at least $L$ for $[\omega^{(0)}]_{\partial_i}$ with boundary $\partial_i$. By Proposition~\ref{prop:closure} there is also a spanned droplet of size at least $L/C_1$ for $\omega^{(0)} \setminus \partial_i$ with boundary $\partial_i$. By Lemma~\ref{lem:AL} this implies that there is also a spanned connected droplet of size between $L/C_1^2$ and $2L/C_1^2$.
Then one can rewrite $E_i$ as the union over all such droplets $D$ of the event that $D$ is spanned. Note that for each discretised DYD $D\cap\bbZ^2$ the event that there exists a spanned DYD $D'$ with $D'\cap\bbZ^2=D\cap\bbZ^2$ coincides with the event that a suitably chosen such $D_0'$ is spanned. Indeed, the intersection of two DYD is a DYD by~\eqref{eq:def:DYD} and the spanning of all $D'$ depend only on the finite number of sites in $D\cap\bbZ^2$, so there is a finite number of possible events associated to different $D'$ and one can consider the intersection of a $D'$ defining each of these events. The same reasoning holds for CDYD and so for each discretised droplet $D\cap\bbZ^2$ one can bound the probability that there exists a spanned droplet with such discretisation using Lemma~\ref{lem:exp:decay:diam}. Thus, by the union bound on discretised droplets counted in Observation~\ref{obs:entropy}, one obtains
\[\mu(E_i)\le |V|.e^{L}2e^{-C_4L/C_1^2}\le\qe/(2N).\qedhere\]
\end{proof}

We next consider the event of having at least $n$ up-arrows
\[\mathcal{B}(n)=\{\omega\in\Omega: |I(\omega)|\geq n\}.\]
\begin{cor}\label{cor:Bn}
For any $1\le n\le 2N$ we have
\[\mu(\mathcal{B}(n))\le q_{\mathrm{eff}}^{n}.\]
\end{cor}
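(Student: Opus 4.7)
The plan is to prove the bound by decomposing $\mathcal{B}(n)$ according to the positions of the first $n$ up-arrows, observing that these positions live on pairwise disjoint blocks of columns (so that the corresponding events are independent), and bounding each factor by $\qe/(2N)$ via the estimate already produced in the proof of Lemma~\ref{lem:no:active:column}.

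Concretely, I would first partition
\begin{equation*}
\mathcal{B}(n)=\bigsqcup_{1\le j_1<\cdots<j_n\le 2N}\{i_k(\omega)=j_k\text{ for all }1\le k\le n\},
\end{equation*}
set $j_0=0$ and write $\omega|_{i,j}$ for the restriction of $\omega$ to $\cC_{i+1}\cup\cdots\cup\cC_j$. The key structural step will be the identity
\begin{equation*}
[\omega^{(k-1)}]_{\partial_i}=[\omega|_{j_{k-1},i}]_{\partial_i}\qquad(i>j_{k-1}),
\end{equation*}
valid on $\{i_{k-1}=j_{k-1}\}$, which follows from $\omega^{(k-1)}\subset V\cap \bbH_{j_{k-1}}$ together with the fact that $\partial_i\supset \bbH_i$ absorbs everything of $\omega^{(k-1)}$ lying in $\bbH_i$. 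As a consequence, conditional on the earlier up-arrows, the event $\{i_k=j_k\}$ will be measurable with respect to the block of columns $\cC_{j_{k-1}+1}\cup\cdots\cup\cC_{j_k}$ only, so the $n$ events $\{i_k=j_k\}$ are jointly independent under $\mu$.

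Next I would use monotonicity of the modified droplet algorithm (Remark~\ref{rem:algo}) to contain $\{i_k=j_k\}$ inside the event $F_{j_{k-1},j_k}$ that $[\omega|_{j_{k-1},j_k}]_{\partial_{j_k}}$ admits a modified spanned droplet of size at least $L$, and then inside the event $E_{j_k}$ from the proof of Lemma~\ref{lem:no:active:column} (the two coincide after absorbing the columns to the right of $j_k$), which is already bounded there by $\mu(E_{j_k})\le \qe/(2N)$. Combining independence with this per-factor bound and summing over $j_1<\cdots<j_n$ would then give
\begin{equation*}
\mu(\mathcal{B}(n))\le \sum_{j_1<\cdots<j_n}\prod_{k=1}^n\mu(F_{j_{k-1},j_k})\le \binom{2N}{n}\left(\frac{\qe}{2N}\right)^n\le \frac{\qe^n}{n!}\le \qe^n.
\end{equation*}

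I expect the only genuinely delicate point to be the measurability and independence claim, which boils down to verifying the identity $[\omega^{(k-1)}]_{\partial_i}=[\omega|_{j_{k-1},i}]_{\partial_i}$ from the absorbing nature of $\partial_i$ together with the recursive definition $\omega^{(k)}=\omega^{(k-1)}\cap \bbH_{i_k}$; once this is in place, the remaining ingredients are a direct application of Remark~\ref{rem:algo}, Lemma~\ref{lem:no:active:column}, and a union bound.
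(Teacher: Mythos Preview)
Your argument is correct and close in spirit to the paper's, but you unroll explicitly what the paper does by induction. The paper proves the bound by induction on $n$: it writes
\[
\mu(|I|\ge n)=\sum_{i=1}^{2N}\mu\big(i_1=i;\ |I(\omega\cap\bbH_i)|\ge n-1\big)
\le \sum_{i=1}^{2N}\mu(i_1=i)\,\mu(|I|\ge n-1),
\]
using that $\{i_1=i\}$ is measurable with respect to $\omega\setminus\bbH_i$ (a stopping time for the column filtration) together with the monotonicity of $|I(\cdot)|$ in $\omega$, and concludes by the induction hypothesis and Lemma~\ref{lem:no:active:column}. Your approach instead decomposes according to all $n$ positions at once and exhibits a product structure $\{i_1=j_1,\dots,i_n=j_n\}=G_1\cap\cdots\cap G_n$ with each $G_k$ measurable in the disjoint block $\omega|_{j_{k-1},j_k}$; this avoids appealing to the monotonicity of $|I|$ and even yields the slightly sharper $\qe^n/n!$, at the cost of a longer combinatorial bookkeeping.

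Two small points of phrasing to tighten. First, the sentence ``the $n$ events $\{i_k=j_k\}$ are jointly independent'' is not literally true: the event $\{i_k=j_k\}$ alone is not $\sigma(\omega|_{j_{k-1},j_k})$-measurable, since $i_k$ depends on $i_{k-1}$. What you really establish (and what your preceding sentence correctly states) is the factorisation $\{i_1=j_1,\dots,i_n=j_n\}=\bigcap_k G_k$ with the $G_k$ living on disjoint blocks; the independence and the bound $\mu(G_k)\le\mu(F_{j_{k-1},j_k})$ then follow. Second, $F_{j_{k-1},j_k}$ and $E_{j_k}$ do not ``coincide'': one has $F_{j_{k-1},j_k}\subset E_{j_k}$ by monotonicity of modified spanning (Remark~\ref{rem:algo}) applied to $[\omega|_{j_{k-1},j_k}]_{\partial_{j_k}}\subset[\omega|_{0,j_k}]_{\partial_{j_k}}=[\omega^{(0)}]_{\partial_{j_k}}$, which is exactly the inclusion you need.
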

\begin{proof}
We prove the statement by induction on $n$. The base, $n=1$, is given by Lemma~\ref{lem:no:active:column}. For $n>1$ we have
\begin{align*}
\mu(|I|\ge n)=&\sum_{i=1}^{2N}\mu(i_1(\omega)=i;|I(\omega\cap\bbH_{i})|\ge n-1)\\
\le&\sum_{i=1}^{2N}\mu(i_1=i)\mu(|I|\ge n-1)\\
\le&q_{\mathrm{eff}}^{n},
\end{align*}
where we used that the event $i_1=i$ only depends on $\omega\setminus\bbH_{i}$ ($i_1$ is a stopping time for the filtration induced by the columns) and that the event $|I|\ge n-1$ is increasing for the order defined by $\omega \preceq \omega'$ when $\omega \subset \omega'$.
\end{proof}

We will now state a key deterministic property of the arrows under legal moves of the KCM dynamics. 

\begin{lem}\label{lem:chain}
Let $\omega\in\Omega$.  Let $x\in \cC_i$ be such that $\omega_x=1$
and the constraint at $x$ is satisfied by $\omega\cup\bar\partial$. Assume that $\Phi(\omega)\neq\Phi(\omega^x)$. Let $j=\max\{k \colon \Phi(\omega)_k\neq\Phi(\omega^x)_k\}$. Then 
\[\Phi(\omega)_{[i-1,j]}=(\uparrow,\downarrow,\uparrow,\downarrow,\uparrow,\dots)\text{, }\Phi(\omega^x)_{[i-1,j]}=(\uparrow,\uparrow,\downarrow,\uparrow,\downarrow,\dots)\text{ and }\Phi(\omega)_{[0,i-1]}=\Phi(\omega^x)_{[0,i-1]}\]
with the convention that $\Phi(\omega)_0=\uparrow$ for all $\omega$.
\end{lem}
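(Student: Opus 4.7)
The plan is to split the proof into two parts: coincidence of arrows on $[0,i-1]$, and the alternating pattern on $[i-1,j]$.

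For the first part, I would exploit that $x\in\cC_i\subset\bbH_k\subset\partial_k$ for every $k\le i-1$, so the single-site difference between $\omega$ and $\omega^x$ is absorbed into the boundary and the closures $[\cdot]_{\partial_k}$ coincide. An induction on the iteration step of the arrow-generating algorithm shows that as long as its current position stays $\le i-1$, the sub-configurations $\omega^{(\ell)}$ and $(\omega^x)^{(\ell)}=\omega^{(\ell)}\cup\{x\}$ give identical closures, identical spanning decisions, and hence identical arrows. Combined with the convention $\Phi(\cdot)_0=\uparrow$, this yields the third displayed equality.

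For the pattern on $[i-1,j]$, I would induct on $i-i^*$, where $i^*$ is the position of the last common up-arrow in $[0,i-1]$. In the recursive step ($i^*\le i-2$), by the first part $i_1(\omega)=i_1(\omega^x)=i^*$; one passes to the sub-configurations $\omega^{(1)}$ and $(\omega^x)^{(1)}=\omega^{(1)}\cup\{x\}$. Crucially, the constraint at $x$ transfers to $\omega^{(1)}\cup\partial_{i^*}$ because the rule $U$ realising it is supported within $O(1)$ of $x\in\cC_i$, which lies inside $\bbH_{i^*}$ when $i^*\le i-2$ (column widths are of order $1/q^\alpha\gg\|U\|$). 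The inductive hypothesis then applies, and the pattern on $[i-1,j]$ propagates back to the original problem unchanged.

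In the base case $i^*=i-1$, I would analyse the parallel algorithms on positions $\ge i$, starting from $\omega^\star:=\omega^{(1)}$ and $\omega^\star\cup\{x\}$. By the monotonicity of the event ``spanned droplet of size $\ge L$ exists'' under addition of infections (a consequence of Remark~\ref{rem:algo}), the current configurations of the two parallel runs stay comparable under inclusion and asymmetric arrows can only occur with the up-arrow on the larger side. A double up-arrow at any position $k\ge i$ synchronises the two configurations (since $x\notin\bbH_k$), forcing global arrow coincidence and contradicting the hypothesis. Thus the first asymmetric arrow, occurring at some position $k_0\ge i$, has $\Phi(\omega)_{k_0}=\downarrow$ and $\Phi(\omega^x)_{k_0}=\uparrow$; afterwards the inclusion between the two current configurations reverses, because the $\omega^x$-side is intersected with $\bbH_{k_0}$ while the $\omega$-side is not. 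Iterating this analysis gives the alternating pattern on $[i,j]$ until the two sides finally synchronise at $j+1$.

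The main obstacle is ruling out the scenario where $\Phi(\omega)_k=\Phi(\omega^x)_k=\downarrow$ for all $i\le k<k_0$ but divergence happens at $k_0>i$, since in that regime the current configurations remain $\omega^\star$ and $\omega^\star\cup\{x\}$ with no geometric reduction. To force $k_0=i$, I would invoke the Closure Proposition~\ref{prop:closure}: any asymmetric modified spanned droplet appearing in $[\omega^\star\cup\{x\}]_{\partial_{k_0}}$ must be contained in a droplet produced by the droplet algorithm applied to $\omega^\star\cup\{x\}$ itself, which differs from the output on $\omega^\star$ only by the single cluster generated near $x$, and the latter is supplied by the constraint-provided infections $U+x$. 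Since $L\ll$ column width, the droplet is well inside $\cC_i$ and would already be present at position $i$, contradicting $\Phi(\omega^x)_i=\downarrow$. This geometric step, translating a late asymmetric spanning into an early one via the constraint cluster together with Proposition~\ref{prop:closure}, is the technical heart of the proof.
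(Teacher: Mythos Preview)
Your overall architecture is reasonable and the first part (coincidence on $[0,i-1]$) is fine and matches the paper. However, the heart of the argument---forcing the first divergence to occur exactly at position $i$---has a genuine gap.

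The proposed use of the Closure Proposition does not do what you claim. You assert that the asymmetric modified spanned droplet $D$ (of size $\ge L$) appearing in $[\omega^\star\cup\{x\}]_{\partial_{k_0}}$ is ``well inside $\cC_i$'' because the droplet algorithm output on $\omega^\star\cup\{x\}$ ``differs from the output on $\omega^\star$ only by the single cluster generated near $x$''. There are two problems. First, adding $x$ can trigger a cascade of merges in the droplet algorithm, so the outputs can differ by an arbitrarily large droplet containing $x$, not merely by a local $O(C_4)$-sized cluster; in particular a droplet of size $\ge L$ cannot be contained in such a cluster. Second---and this is the decisive point---the droplet $D$ is forced to sit near $\cC_{k_0}$, not near $\cC_i$: if $d(D,\cC_{k_0})$ were large, the same spanning would already occur with boundary $\partial_{k_0-1}$, contradicting $\Phi'_{k_0-1}=\downarrow$. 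So the geometric conclusion you draw is in fact the opposite of what holds, and the Closure Proposition gives you no handle on why the asymmetry at $k_0$ should force one at $i$.

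The paper's mechanism is different. Its Claim~2 shows that a double down-arrow $\Phi_k=\Phi'_k=\downarrow$ forces agreement from $k$ onwards; applied at $k=i-1$ this yields $\Phi_{i-1}=\uparrow$. The proof is a \emph{path-of-infections argument}: if $D$ is modified spanned for $[\omega^{(0)}\cap\bbH_{i'}]_{\partial_{k+1}}$ but not for $[(\omega^x)^{(0)}\cap\bbH_{k-1}]_{\partial_{k+1}}$, then some $y\in D$ is infected only thanks to sites outside $\bbH_{k-1}$ (or thanks to $x$, in the boundary sub-case), and one follows the infection chain from that source to $y$. Since $D$ is near $\cC_{k+1}$ while the source is on the other side of $\cC_k$, the chain crosses $\cC_k$ and has diameter $\gg L$ there, producing a modified spanned droplet with boundary $\partial_k$---contradicting the absence of an up-arrow in $(i',k]$. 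This propagation argument, together with the case split on $d(x,\bbR^2\setminus\bbH_{i-1})$, is what your outline is missing. (A minor separate issue: in your recursive step you write $i_1(\omega)=i^*$, but $i_1$ is the \emph{first} up-arrow while $i^*$ is the \emph{last} one in $[0,i-1]$; the reduction still makes sense if you peel off up-arrows one at a time, but it does not by itself establish $\Phi_{i-1}=\uparrow$.)
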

\begin{proof}
We denote $\Phi:=\Phi(\omega)$ and $\Phi':=\Phi(\omega^x)$. Clearly, $\Phi_{[0,i-1]}=\Phi'_{[0,i-1]}$, since those values do not depend on $\omega\cap \bbH_{i-1}$. 

\par\textbf{Claim 1.}
Let $k\ge i$. If $\Phi_k=\uparrow$, then $\Phi_{[k+1,2N]}\ge \Phi'_{[k+1,2N]}$ for the lexicographic order associated to $\uparrow<\downarrow$. If $\Phi'_k=\uparrow$, then $\Phi_{[k+1,2N]}\le \Phi'_{[k+1,2N]}$.
\begin{proof}[Proof of Claim 1]
The two assertions being analogous, we only prove the first one, so assume that $\Phi_k=\uparrow$. Let $j'=\min\{l>k\colon\Phi_{l}=\uparrow\}$. Then there is a modified spanned droplet of size at least $L$ for $[\omega^{(0)}\cap\bbH_{k}]_{\partial_{j'}}$ with boundary $\partial_{j'}$. But this is also true for $\omega^x$ instead of $\omega$, as they coincide in $\bbH_{k}$, and in particular the position of the first up-arrow of $\Phi'$ after $k$ is at most~$j'$.
\end{proof}

\par\textbf{Claim 2.}
Let $k\ge i-1$ be such that $\Phi_{k}=\Phi'_k=\downarrow$. Then $k>j$ i.e. $\Phi_{[k,2N]}=\Phi'_{[k,2N]}$.
\begin{proof}[Proof of Claim 2]
We can clearly assume that $k < 2N$. Further assume for a contradiction that $\Phi_{k+1}=\uparrow$ and $\Phi'_{k+1}=\downarrow$. Let $i'=\max\{l<k \colon\Phi_l=\uparrow\}$. Then there exists a modified spanned droplet $D$ of size at least $L$ for $[\omega^{(0)}\cap\bbH_{i'}]_{\partial_{k+1}}$ with boundary $\partial_{k+1}$. By Lemma~\ref{lem:AL} we can assume that $L\le |D|\le C_1L$. However, if $d(D,\cC_{k+1})>C_5$, then $D$ is also modified spanned for $[\omega^{(0)}\cap\bbH_{i'}]_{\partial_{k}}$ with boundary $\partial_{k}$, contradicting the definition of $i'$. Indeed, from the output of the modified droplet algorithm for $[\omega^{(0)}\cap\bbH_{i'}]_{\partial_{k}}\cap D$ with boundary $\partial_k$ we can create a collection $\hat{\mathcal{D}}$ of droplets for $\partial_{k+1}$ by extending CDYD appropriately, thus $\hat{\mathcal{D}}$ contains $Q'(C')\setminus\partial_k=Q'(C')\setminus\partial_{k+1}$ for every modified cluster $C'$ of $[\omega^{(0)}\cap\bbH_{i'}]_{\partial_{k}}\cap D$ with boundary $\partial_k$. Moreover, the modified clusters of  $[\omega^{(0)}\cap\bbH_{i'}]_{\partial_{k+1}}\cap D$ with boundary $\partial_{k+1}$ are contained in the modified clusters of $[\omega^{(0)}\cap\bbH_{i'}]_{\partial_{k}}\cap D$ with boundary $\partial_k$, so $\hat{\mathcal{D}}$ contains the output of the modified droplet algorithm for $[\omega^{(0)}\cap\bbH_{i'}]_{\partial_{k+1}}\cap D$ with boundary $\partial_{k+1}$ by Remark~\ref{rem:algo}, itself containing $D$.

Therefore, $d(D,\cC_{k+1})\le C_5$. Moreover, $D$ is not modified spanned for $[(\omega^x)^{(0)}\cap\bbH_{k-1}]_{\partial_{k+1}}$ with boundary $\partial_{k+1}$ (otherwise $\Phi'_{[k,k+1]}\neq(\downarrow,\downarrow)$). Therefore, there exists a site $y\in D$ such that
\[y\in[\omega^{(0)}\cap\bbH_{i'}]_{\partial_{k+1}}\setminus[(\omega^x)^{(0)}\cap\bbH_{k-1}]_{\partial_{k+1}}.\]

We consider two subcases. First assume that $d(x,\bbR^2\setminus\bbH_{i-1})\ge C_1$. Then, the constraint at $x$ is satisfied by $(\omega\cap\bbH_{i-1})\cup\bar\partial$, so $[\omega^{(0)}\cap\bbH_{k-1}]_{\partial_{k+1}}=[(\omega^x)^{(0)}\cap\bbH_{k-1}]_{\partial_{k+1}}$, and there is a path
\[P\subset[\omega^{(0)}\cap\bbH_{i'}]_{\partial_{k+1}}\setminus[(\omega^x)^{(0)}\cap\bbH_{k-1}]_{\partial_{k+1}}\]
from $\bbR^2\setminus\bbH_{k-1}$ to $y$ such that each two consecutive sites are at distance at most $O(1)$. But $d(y,\bbR^2\setminus\bbH_{k-1})\ge \iota/q^{\alpha}-\diam(D)-C_5\ge C_2(L+1)$, so one can find a subpath $P'\subset\cC_k\cap P$ of diameter at least $C_2L$. Yet, it is clear that $P'\subset[\omega^{(0)}\cap\bbH_{i'}]_{\partial_{k}}$ implies the existence of a modified spanned droplet of size larger than $L$ with boundary $\partial_k$, so one would have an up-arrow of $\Phi$ in $[i'+1,k]$ -- a contradiction. If, on the contrary, $d(x,\bbR^2\setminus\bbH_{i-1})\le C_1$, we can redo the same reasoning, but $P$ needs to extend to either $\bbR^2\setminus\bbH_{k-1}$ or $x$, both of which are sufficiently far from $y$.

Thus, $\Phi_{k+1}=\Phi'_{k+1}$, as the case $\Phi_{k+1}=\downarrow,\Phi'_{k+1}=\uparrow$ is treated identically. But then either both are $\uparrow$, in which case we are done by Claim~1 or both are $\downarrow$ and we are done by induction.
\end{proof}
It is easy to see that the only non-identical arrow sequences $\Phi_{[i-1,j]}$ and $\Phi_{[i-1,j]}'$ satisfying the two claims are $(\uparrow,\downarrow,\uparrow,\downarrow,\dots)$ and $(\uparrow,\uparrow,\downarrow,\uparrow,\dots)$ (in this order using that $\omega_x=1$). Indeed, by Claims~1 and~2 $\Phi_k\neq\Phi'_k$ for all $i\le k\le j$, by Claim~1 one cannot have two consecutive up arrows neither in $\Phi$ nor in $\Phi'$ in the interval $[i,j]$ and by Claim~2 $\Phi_{i-1}=\Phi'_{i-1}=\uparrow$.
\end{proof}

\subsection{Renormalised East dynamics}
\label{sec:eta}

We partition $\{1,\dots,2N\}$ into blocks $B_i=\{2i-1,2i\}$ for $1\le i\le N$. Given $\omega\in\Omega$, we define $\eta(\omega)\in\{0,1\}^{\{1,\dots,N\}}$ by
\[\eta(\omega)_i=\1_{\{\forall j\in B_i:\Phi(\omega)_j=\downarrow\}}
\]
for all $i\in \{1,\dots N\}$. Let
\[n=\lfloor L\rfloor=\left\lfloor\frac{1}{C_5q^\alpha}\right\rfloor<\lfloor\log_2 N\rfloor.\]

Recall the definition  of legal paths, Definition \ref{def:legal}. Given an event $\mathcal E\subset\Omega$ and a legal path $\gamma=(\omega_{(0)},\dots,\omega_{(k)})$ we will say that $\gamma\cap\mathcal E=\varnothing$ if
$\omega_{(i)}\not\in \mathcal E$ for all $i\in\{0,\dots,k\}$. 
Also, given $\omega\in\Omega$ and $\mathcal A\subset\Omega$, we say that $\gamma$ connects $\omega$ to $\mathcal A$ if $\omega_{(0)}=\omega$ and $\omega_{(k)}\in\mathcal A$. 
Recall that $\mathcal B(n)\subset \Omega$ is the set of configurations with at least $n$ up-arrows.  The following is a straightforward but important corollary of Lemma~\ref{lem:chain}.
\begin{cor}\label{cor:etaEast}
For any legal path $(\omega_{(0)},\dots,\omega_{(k)})$, the path $(\eta(\omega_{(0)}),\dots,\eta(\omega_{(k)}))$ is legal for the East model on $\{1,\dots,N\}$ defined by fixing $\eta_0=0$.
\end{cor}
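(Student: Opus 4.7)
The plan is to reduce to a single-step analysis: for any legal KCM move $\omega_{(t-1)} \to \omega_{(t)}$, the induced transition $\eta(\omega_{(t-1)}) \to \eta(\omega_{(t)})$ is either trivial or a single legal East spin flip at a coordinate $m$ satisfying $\eta(\omega_{(t-1)})_{m-1} = \eta(\omega_{(t)})_{m-1} = 0$ (with the convention $\eta_0 = 0$). Chaining these along the KCM path then produces the claimed legal East path. Fix a step and, swapping endpoints if necessary, write it as $\omega \to \omega^x$ with $\omega_x = 1$, $\omega^x_x = 0$, $x \in \cC_i$; this is harmless since the legality of KCM moves does not depend on the value at $x$, and the constraint at $x$ being satisfied by $\omega$ implies it is also satisfied by $\omega \cup \bar\partial$. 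If $\Phi(\omega) = \Phi(\omega^x)$ then $\eta(\omega) = \eta(\omega^x)$ and nothing remains; otherwise Lemma~\ref{lem:chain} applies and yields the explicit alternating patterns of $\Phi := \Phi(\omega)$ and $\Phi' := \Phi(\omega^x)$ on $[i-1, j]$, with agreement on $[0, i-1]$ and $\Phi_{i-1} = \uparrow$.

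The crucial structural consequence is that on $[i-1, j]$ both $\Phi$ and $\Phi'$ strictly alternate, so neither contains two consecutive down-arrows inside this interval. To show that $\eta$ changes at most one position, I would classify each block $B_m = \{2m-1, 2m\}$ as (i) entirely inside $[i-1, j]$, in which case the no-consecutive-$\downarrow$ property forces $\eta_m = 0$ in both configurations; (ii) disjoint from $[i, j]$, in which case $\Phi$ and $\Phi'$ coincide on $B_m$ and $\eta_m$ is unchanged; or (iii) boundary-straddling. A block straddling the left endpoint must be $\{i-2, i-1\}$ with $i$ odd, and the common up-arrow $\Phi_{i-1} = \Phi'_{i-1} = \uparrow$ already gives $\eta_m = 0$ in both. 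A block straddling the right endpoint must be $B_m = \{j, j+1\}$ with $j$ odd, and this block registers a change precisely when $\Phi_{j+1} = \downarrow$. Hence at most one coordinate of $\eta$ can change.

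It remains to verify the East constraint $\eta(\omega)_{m-1} = \eta(\omega^x)_{m-1} = 0$ at the unique possibly-changed position $m$. For $m = 1$ this is the convention $\eta_0 = 0$; for $m \geq 2$ the preceding block is $B_{m-1} = \{j-2, j-1\}$, and I would split according to the size of the differing region: if $i \leq j-2$, both positions of $B_{m-1}$ lie in the alternating region $[i-1, j]$, so each of $\Phi$ and $\Phi'$ carries an $\uparrow$ on the block; if $i = j-1$, then $B_{m-1} = \{i-1, i\}$ contains the common up-arrow $\Phi_{i-1} = \uparrow$; if $i = j$, the block lies entirely in the common region $[0, i-1]$ and $\Phi_{i-1} = \uparrow$ again supplies the needed up-arrow. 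In every subcase $\eta_{m-1} = 0$ in both configurations, which is the East legality condition at coordinate $m$.

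The main (and essentially only) obstacle is the boundary bookkeeping in the two case analyses above: identifying which block can straddle each endpoint depending on the parities of $i$ and $j$, and keeping track of the three subcases for $B_{m-1}$. All of this is mechanical once the precise alternating patterns delivered by Lemma~\ref{lem:chain} are in hand, and no further ingredient beyond them is required.
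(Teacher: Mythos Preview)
Your proof is correct and follows the same approach as the paper's: both reduce to a single step, invoke Lemma~\ref{lem:chain} for the alternating structure of $\Phi$ and $\Phi'$ on $[i-1,j]$, and read off that $\eta$ can change only at the block containing the right endpoint $j$, with the preceding block forced to contain an up-arrow. The paper compresses all of this into two sentences (``the alternating chain ends in some $B_i$, preceded by~$\uparrow$''), whereas you spell out the block classification explicitly.

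One small remark: your left-boundary-straddling case is actually vacuous. A block in your case~(iii) must intersect $[i,j]$ and contain an element outside $[i-1,j]$; if that element were $<i-1$ you would need $2m\le i-1$ and $2m\ge i$ simultaneously. So the only case~(iii) block is $\{j,j+1\}$ with $j$ odd, and the block $\{i-2,i-1\}$ you discuss is already covered by case~(ii). This does not affect correctness, since you handle it consistently anyway.
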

\begin{proof}
By Lemma~\ref{lem:chain} $\eta(\omega_{(j)})\neq \eta(\omega_{(j+1)})$ implies that $\Phi(\omega_{(j)})$ and $\Phi(\omega_{(j+1)})$ only differ on an alternating chain of arrows ending in some $B_i$, preceded by $\uparrow$. Then clearly $\eta(\omega_{(j)})_l=\eta(\omega_{(j+1)})_l$ for all $l\neq i$ and $\eta(\omega_{(j)})_{i-1}=0$.
\end{proof}

Let $\Omega_{\downarrow}$ and $\Omega_{\uparrow}^{2N}$ 
be respectively the set of configurations which do not have up-arrows, and the set of configurations with an up-arrow in the $2N$-th  column, namely
\begin{align*}
\Omega_{\downarrow}&{}=\{\omega\in\Omega:\Phi(\omega)=(\downarrow,\dots,\downarrow)\},\\
\Omega_{\uparrow}^{2N}&{}=\{\omega\in\Omega:\Phi(\omega)_{2N}=\uparrow\}.
\end{align*}

Combining the last corollary with Proposition~\ref{lem:comb_East}, we obtain the most important input for the proof of the main result.
\begin{cor} \label{keycor}
For any $\omega\in\Omega_{\downarrow}$
there does not exist a legal path $\gamma$ with $\gamma\cap \mathcal B(n+1)=\varnothing$ 
connecting $\omega$ to $\Omega_{\uparrow}^{2N}$.
\end{cor}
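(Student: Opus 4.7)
The proof is a short combination of Corollary~\ref{cor:etaEast} with the East-model bottleneck Proposition~\ref{lem:comb_East}, once we translate the start- and end-configurations into the renormalised language of $\eta$.

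The plan is to argue by contradiction. Suppose such a legal KCM path $\gamma=(\omega_{(0)},\dots,\omega_{(k)})$ exists, with $\omega_{(0)}\in\Omega_\downarrow$ and $\omega_{(k)}\in\Omega_\uparrow^{2N}$. First I would compute the two endpoints of the renormalised path. Since $\omega_{(0)}\in\Omega_\downarrow$ has all arrows equal to $\downarrow$, the defining indicator in $\eta(\omega_{(0)})_i=\1_{\{\forall j\in B_i:\Phi(\omega_{(0)})_j=\downarrow\}}$ is $1$ for every $i\in\{1,\dots,N\}$, i.e.\ $\eta(\omega_{(0)})=(1,\dots,1)$ is the fully occupied East configuration. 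Since $\omega_{(k)}\in\Omega_\uparrow^{2N}$ has $\Phi(\omega_{(k)})_{2N}=\uparrow$ and $2N\in B_N$, the block $B_N$ contains an up-arrow, so $\eta(\omega_{(k)})_N=0$.

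Next I would apply Corollary~\ref{cor:etaEast}, which guarantees that $(\eta(\omega_{(0)}),\dots,\eta(\omega_{(k)}))$ is a legal path for the East model on $\{1,\dots,N\}$ with frozen boundary $\eta_0=0$, connecting the fully occupied configuration to one with $\eta_N=0$. Proposition~\ref{lem:comb_East} (applied with $M=N$) then forces the existence of an intermediate index $j\in\{0,\dots,k\}$ such that $\eta(\omega_{(j)})$ has at least $\lceil\log_2(N+1)\rceil$ zeros.

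Each such zero is a block $B_i$ in which $\Phi(\omega_{(j)})$ has at least one up-arrow, and these blocks are disjoint; therefore $|I(\omega_{(j)})|\ge\lceil\log_2(N+1)\rceil$. It remains to check that $\lceil\log_2(N+1)\rceil\ge n+1$. Recall $n=\lfloor L\rfloor<\lfloor\log_2 N\rfloor$, so $n+1\le\lfloor\log_2 N\rfloor\le\lceil\log_2(N+1)\rceil$. Hence $\omega_{(j)}\in\mathcal B(n+1)$, contradicting $\gamma\cap\mathcal B(n+1)=\varnothing$ and completing the proof.

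There is no real obstacle here: the entire content of the statement has been packaged into the two previous results (the East-reduction Corollary~\ref{cor:etaEast} and the East combinatorial Proposition~\ref{lem:comb_East}), and the only thing to verify carefully is the identification of the endpoints in $\eta$-coordinates and the arithmetic comparison $n+1\le\lceil\log_2(N+1)\rceil$, which is built into the very definition of $n$.
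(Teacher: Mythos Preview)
Your proof is correct and follows exactly the approach indicated in the paper, which merely states that the corollary is obtained by combining Corollary~\ref{cor:etaEast} with Proposition~\ref{lem:comb_East}. You have spelled out the details faithfully: the identification of the endpoints in $\eta$-coordinates, the application of the East bottleneck with $M=N$, and the arithmetic $n+1\le\lfloor\log_2 N\rfloor\le\lceil\log_2(N+1)\rceil$ are all accurate.
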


\subsection{Proof of Theorem \ref{th:main}}
\label{sec:proof}
To prove Theorem \ref{th:main} it is sufficient to prove the lower bound for the mean infection time and use the following inequality (see  \cite[Theorem 4.4]{Cancrini09} and also \cite[Section~2.2]{Martinelli19b})
\begin{equation}
\label{eq:trel:tau}
\trel\geq q \bbE(\tau_0).
\end{equation}
However, it is instructive  to construct at this stage a test function that directly gives the desired lower bound on $\trel$ without going through the comparison with the mean infection time. Indeed, the mechanism will appear more clearly this way. 

\paragraph{Proof of Theorem~\ref{th:main} for $\trel$}
We define the event
\[\tilde \cA=\{\omega\in\Omega\colon \exists 
{\mbox{ a legal path $\gamma$ with $\gamma\cap \mathcal B(n)=\varnothing$ connecting $\omega \cup (\bbZ^2\setminus V)$ to $\Omega_{\downarrow}$}}\}\]
and the test function $f:\Omega\to\{0,1\}$
\[f=\1_{\tilde \cA}.\]
Then, by Definition~\ref{def:PC} we get
\begin{equation}
    \label{eq:lb_trel}
    \trel\geq \frac{\mu(\tilde{\cA})(1-\mu(\tilde{\cA}))}{\cD(f)},
\end{equation}
where the Dirichlet form $\cD(f)$ is defined in \eqref{def:Dirichlet}.

\begin{lem}[Bounds on $\mu(\tilde \cA)$]
\label{lem:muAtilde}
\[\mu(\tilde {\cA})\left(1- \mu(\tilde {\cA})\right)\geq \exp\left(\frac{\log q}{C_4q^{\alpha}}\right).\]
\end{lem}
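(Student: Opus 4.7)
The plan is to bound $\mu(\tilde\cA)$ away from both $0$ and $1$, so that $\mu(\tilde\cA)(1-\mu(\tilde\cA))$ reduces to a lower bound on $\mu(\Omega_\uparrow^{2N})$. First, $\mu(\tilde\cA)\ge 1/2$ for small $q$: since $\Phi(\omega)$ depends only on $\omega\cap V$, any $\omega\in\Omega_\downarrow$ satisfies $\omega\cup(\bbZ^2\setminus V)\in\Omega_\downarrow$ as well, and the trivial length-one path is legal, avoids $\cB(n)$ and ends in $\Omega_\downarrow$. Thus $\Omega_\downarrow\subset\tilde\cA$, and Lemma~\ref{lem:no:active:column} gives $\mu(\tilde\cA)\ge 1-\qe\ge 1/2$.

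Next, $1-\mu(\tilde\cA)\ge\mu(\Omega_\uparrow^{2N})$. KCM moves are reversible because constraints never depend on the flipped site, so Corollary~\ref{keycor} forbids any legal path starting in $\Omega_\uparrow^{2N}$, ending in $\Omega_\downarrow$ and avoiding $\cB(n+1)$. Since $\cB(n+1)\subset\cB(n)$, avoiding $\cB(n)$ is at least as restrictive, hence no such path avoids $\cB(n)$ either. Applied to $\omega\cup(\bbZ^2\setminus V)$, which has the same arrows as $\omega$, this shows $\Omega_\uparrow^{2N}\cap\tilde\cA=\varnothing$.

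The crux is the lower bound on $\mu(\Omega_\uparrow^{2N})$ via a nucleation inside $\cC_{2N}$. I would fix a DYD $D\subset\cC_{2N}$ of size $L$ with long axis parallel to $v_1+\pi/2$, placed far from $\bar\partial$, and place along this axis $k=\Theta(L/C_4')$ disjoint groups $C_1,\dots,C_k$ of $\alpha$ lattice sites each, every group of diameter at most $C_2'$, in generic position, and with consecutive spacing at most $C_4'/2$. The resulting set $S=\bigcup_i C_i$ has $|S|=\alpha k=\Theta(L/C_4')$. A routine combinatorial check (the generic-position requirement rules out the crumb condition of Definition~\ref{def:cluster}) shows that each $C_i$ is a modified cluster; consecutive quadrilaterals $Q'(C_i)$ overlap, and the modified droplet algorithm therefore merges them into a single DYD of size at least $L$. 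Hence $S$ modified-spans a droplet of size $\ge L$ for boundary $\partial_{2N}$. Let $\Omega_\downarrow^{<2N}$ denote the event that no modified spanned droplet of size $\ge L$ exists for $[\omega^{(0)}]_{\partial_i}$ for any $1\le i<2N$. For each such $i$ this event depends only on $\omega\cap\bigcup_{j\le i}\cC_j\subset V\setminus\cC_{2N}$, so $\Omega_\downarrow^{<2N}$ is independent of $\{S\subset\omega\}$; the union bound from the proof of Lemma~\ref{lem:no:active:column} gives $\mu(\Omega_\downarrow^{<2N})\ge 1-\qe\ge 1/2$.

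On the intersection $\{S\subset\omega\}\cap\Omega_\downarrow^{<2N}$, monotonicity of spanning (Remark~\ref{rem:algo}) produces a spanned droplet of size $\ge L$ for $\partial_{2N}$ while none exists for any earlier boundary, so $i_1(\omega)=2N$ and $\omega\in\Omega_\uparrow^{2N}$. Independence then yields
\[
\mu(\Omega_\uparrow^{2N})\ge \tfrac{1}{2}\,q^{|S|}\ge \tfrac{1}{2}\exp\left(\frac{\log q}{C_4 q^\alpha}\right),
\]
where the second inequality uses $|S|=\Theta(1/(C_4'C_5 q^\alpha))\le 1/(C_4 q^\alpha)$, guaranteed by the hierarchy $C_4'\ll C_4\ll C_5$ (with the fixed combinatorial constant $\alpha$). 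Multiplying by $\mu(\tilde\cA)\ge 1/2$ and absorbing the remaining constant factor into a slightly enlarged $C_4$ gives the announced bound. The main obstacle is purely the cluster/crumb verification inside the nucleation construction; the reversibility argument and the disjoint-support independence are both clean once the arrow machinery is in place.
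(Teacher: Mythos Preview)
Your proof is correct and follows the paper's strategy exactly for the two main inequalities: $\mu(\tilde\cA)\ge\mu(\Omega_\downarrow)\ge 1-\qe$ and $1-\mu(\tilde\cA)\ge\mu(\Omega_\uparrow^{2N})$ via reversibility of legal paths and Corollary~\ref{keycor}. For the remaining lower bound on $\mu(\Omega_\uparrow^{2N})$ the paper takes a shorter route than your nucleation construction: it simply notes that an empty segment of length $C_1L$ in $\cC_{2N}$ already forces $\Phi(\omega)_{2N}=\uparrow$, giving $\mu(\Omega_\uparrow^{2N})\ge q^{C_1L}$ with no cluster/crumb verification needed. Your conditioning on $\Omega_\downarrow^{<2N}$ is also superfluous: since $S\subset\cC_{2N}\subset\bbH_i$ for every $i<2N$, the set $S$ survives each truncation $\omega^{(k)}=\omega^{(k-1)}\cap\bbH_{i_k}$ in the recursion defining $I(\omega)$, so the strictly increasing sequence $i_1<i_2<\cdots\le 2N$ must eventually hit $2N$ regardless of what the earlier columns do.
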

\begin{proof}
By Lemma~\ref{lem:no:active:column} we have
\[\mu(\tilde \cA)\geq \mu(\Omega_{\downarrow})\geq 1-\qe\ge 1/2.\]
On the other hand,
\[1-\mu(\tilde{\cA}) \geq  
\mu(\Omega_{\uparrow}^{2N})\geq q^{C_1L}\ge\exp(C_1\log q/(C_5q^\alpha)),\]
where we used Corollary~\ref{keycor} for the first inequality as well as the fact that if $(\omega_{(0)},\dots,\omega_{(k)})$ is a legal path, then $(\omega_{(k)},\dots,\omega_{(0)})$ is one as well, and for the second inequality we notice that for the $2N$-th arrow to be up it is sufficient to have an empty segment of length $C_1L$ in $\mathcal{C}_{2N}$. 
\end{proof}

\begin{lem}[Estimate of the Dirichlet form]
\label{lem:diri}
$\mathcal{D}(f)\le \exp\left(-1/(C_5^3q^{2\alpha})\right)$.
\end{lem}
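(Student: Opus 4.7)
The plan is to reduce the Dirichlet form to a union bound on atypical configurations in $\mathcal B(n)$, using the reversibility of legal flips to connect the event $\{f(\omega)\neq f(\omega^x)\}$ to the event $\{\omega\in\mathcal B(n)\text{ or }\omega^x\in\mathcal B(n)\}$. Since $f=\mathbbm 1_{\tilde{\cA}}$ depends on $\omega$ only through $\omega\cap V$, only the sites $x\in V$ contribute to $\cD(f)$, and since $f$ takes values in $\{0,1\}$ we have $\mbox{Var}_x(f)\le \tfrac14\mathbbm 1_{f(\omega)\neq f(\omega^x)}$. So the task becomes estimating
\[\cD(f)\le\tfrac14\sum_{x\in V}\mu\bigl(c_x\,\mathbbm 1_{f(\omega)\neq f(\omega^x)}\bigr).\]

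The crucial step is the deterministic claim that whenever $x\in V$, $c_x(\omega)=1$ and $f(\omega)\neq f(\omega^x)$, one of $\omega,\omega^x$ lies in $\mathcal B(n)$. To prove it, suppose w.l.o.g.\ $\omega\in\tilde{\cA}$ and $\omega^x\notin\tilde{\cA}$, and let $\gamma$ be a legal path from $\omega\cup(\bbZ^2\setminus V)$ to some configuration in $\Omega_\downarrow$ that avoids $\mathcal B(n)$. Because the constraint $c_x$ does not depend on $\omega_x$, adding the fully occupied boundary $\bbZ^2\setminus V$ preserves $c_x=1$, so the single spin-flip at $x$ is legal and reversible. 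Prepending it to $\gamma$ produces a legal path from $\omega^x\cup(\bbZ^2\setminus V)$ to $\Omega_\downarrow$, and this would witness $\omega^x\in\tilde{\cA}$ unless the prepended initial configuration $\omega^x\cup(\bbZ^2\setminus V)$ lies in $\mathcal B(n)$; since $\mathcal B(n)$ depends only on $\omega\cap V$, this forces $\omega^x\in\mathcal B(n)$, proving the claim.

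From here the bound is a union estimate: dropping $c_x$ and using the change of variable $\omega\leftrightarrow\omega^x$, which costs a factor of at most $(1-q)/q\le 1/q$ per site,
\[\mu\bigl(c_x\,\mathbbm 1_{f(\omega)\neq f(\omega^x)}\bigr)\le\mu(\mathcal B(n))+\mu(\{\omega\colon\omega^x\in\mathcal B(n)\})\le \tfrac{2}{q}\mu(\mathcal B(n)).\]
Summing over the $|V|=O(e^{2L})$ sites of $V$ and invoking Corollary~\ref{cor:Bn} with $n=\lfloor L\rfloor$, $\qe=e^{-L}$, yields
\[\cD(f)\le \frac{|V|}{2q}\,\qe^{\,n}\le \exp\bigl(2L+O(\log(1/q))-nL\bigr)\le \exp(-L^2/2)\]
for $q$ small. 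Since $L^2/2=1/(2C_5^2 q^{2\alpha})$ and $C_5$ is large, this is at most $\exp(-1/(C_5^3 q^{2\alpha}))$, the required bound.

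The only nontrivial ingredient is the reversibility/prepending argument in the second paragraph; everything else is a union bound combined with the already established probabilistic decay in Corollary~\ref{cor:Bn}. The $1/q$ loss from the non-invariance of $\mu$ under $\omega\mapsto\omega^x$ and the entropy factor $|V|$ are both completely dominated by the super-exponential decay $\qe^n\sim e^{-L^2}$, which is what makes the argument so comfortable: almost any reasonable definition of the test function $f$ compatible with the East mapping would suffice, as long as its sensitivity is controlled by the rare event of having $n$ up-arrows.
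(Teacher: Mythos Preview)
Your argument is correct and follows essentially the same path-prepending idea as the paper. Two remarks:

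\emph{Terminological slip.} In the paper's convention $\omega$ is identified with its set of \emph{empty} sites, so $\omega\cup(\bbZ^2\setminus V)$ adds a fully \emph{empty} (infected) boundary, not a fully occupied one. Your conclusion that the constraint at $x$ remains satisfied is still correct---adding empty sites can only help---but the reason you give is backwards.

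\emph{Comparison with the paper.} The paper avoids your change-of-variable step (and the harmless $1/q$ loss) by invoking Lemma~\ref{lem:chain}: once the prepending argument gives $\omega^x\in\cB(n)$, the bound $||I(\omega)|-|I(\omega^x)||\le 1$ from that lemma yields $\omega\in\cB(n-1)$ directly, so one bounds by $|V|\,\mu(\cB(n-1))$ without ever needing to estimate $\mu(\{\omega:\omega^x\in\cB(n)\})$. Your route is slightly more self-contained (it does not appeal to the structural Lemma~\ref{lem:chain}) at the cost of the negligible $1/q$ factor; both are valid and the numerics close identically.
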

\begin{proof}
Using the fact that $f(\omega)$ depends only on the values of $\omega$ in $V$, we get
\begin{equation}
\label{eq:cD}\cD(f)=\sum_{x\in V} \mu(c_x {\mbox{Var}}_x(f))=q(1-q)\sum_{x\in V} \mu\left(c_x \1_{\{\omega\in \tilde{\cA},\,\omega^x\not\in \tilde{\cA}\}}+c_x\1_{\{\omega\not\in \tilde{\cA},\,\omega^x\in \tilde{\cA}\}}\right)\le |V|\mu(\cB(n-1)),
\end{equation}
since, by Lemma~\ref{lem:chain} $||I(\omega)|-|I(\omega^x)||\le 1$ when $c_x=1$, so the indicators both imply $\omega\in\cB(n-1)$. Indeed, $\omega\in \tilde{\cA}$ implies the existence of a legal path $\gamma$ from $\Omega_{\downarrow}$ to $\omega\cup (\bbZ^2\setminus V)$ with each configuration not in $\cB(n)$. Since $c_x=1$, the path $\bar\gamma$ obtained by adding the transition from $\omega\cup (\bbZ^2\setminus V)$ to $\omega^x\cup (\bbZ^2\setminus V)$ is also legal, thus the hypothesis $\omega^x\not\in \tilde{\cA}$ is not satisfied unless $\omega^x\in \cB(n)$ (and similarly for $\omega\not\in\tilde\cA,\omega^x\in\tilde\cA$). 
Thus, the result follows by using Corollary~\ref{cor:Bn}.
\end{proof}

Then the lower bound for $\trel$ of Theorem \ref{th:main} follows
from \eqref{eq:lb_trel}, Lemma \ref{lem:muAtilde} and Lemma \ref{lem:diri}.

The above proof, together with the matching upper bound of Theorem~2(a) of \cite{Martinelli19b} indicate that the bottleneck dominating the time scales is the creation of $\Theta(\log(1/\qe))$ simultaneous droplets of probability $\qe$.

\paragraph{Proof of Theorem~\ref{th:main} for $\bbE(\tau_0)$}
The proof of the lower bound for the infection time  follows a similar route, with some complications due to the fact that we have to identify a (sufficiently likely) initial set starting from which
we have to go through the bottleneck configurations before infecting the origin.

By \cite[Corollary~3.4]{Mareche20Duarte}, to prove the desired lower bound on $\bbE(\tau_0)$ 
it suffices to construct a local function $\phi=\phi_q$ such that
\begin{enumerate}[label=(\roman*)]
\item \label{cond:i} $\mu(\phi^2)=1$,
\item \label{cond:iii} $\frac{\mu(\phi)^4}{\mathcal{D}(\phi)}\ge\exp(1/(C_5^4q^{2\alpha}))$,
\item \label{cond:iv} $\phi(\omega)=0$ if $\omega_0=0$.
\end{enumerate}
Inspired by \cite{Mareche20Duarte}
we let 
\[\Omega_g=\Omega_{\downarrow}\cap\{\omega\in\Omega:\omega_{\Lambda_0}=1\}\]
 where $\Lambda_0=\{x\in \bbZ^2 \colon d(x,0)\le1/(4q^{\alpha})\}\subset\cC_{2N}$ and
\[\cA=\{\omega\in\Omega\colon\exists {\mbox{ a legal path $\gamma$ with $\gamma\cap \cB(n)=\varnothing$ connecting $\omega\cup (\bbZ^2\setminus V)$ to $\Omega_g$}}\}.\]
Then we set 
\begin{equation}
\label{test}
\phi(\cdot)=\1_{{\mathcal{A}}}(\cdot)/\mu({\mathcal{A}})^{1/2}.
\end{equation}
We are now left with proving that this function satisfies \ref{cond:i}-\ref{cond:iv} above.

Property~\ref{cond:i} follows immediately from \eqref{test}. In order to verify \ref{cond:iii} we start by establishing a lower bound on $\mu(\cA)$.
 By definition it holds that
\begin{equation}
\label{eq:muA}
\mu(\cA)\geq \mu(\Omega_g)\geq \mu(\omega_{\Lambda_0}=1) \mu(\Omega_{\downarrow})\geq e^{-O(1)/q^{2\alpha-1}}(1-\qe)= e^{-O(1)/q^{2\alpha-1}},
\end{equation}
where we used Harris' inequality~\cite{Harris60} ($\{\omega_{\Lambda_0}=1\}$ and $\Omega_{\downarrow}$ are increasing events if we consider that $\omega \leq \omega'$ when $\omega_x \leq \omega'_x$ for all $x \in \bbZ^2$), Lemma~\ref{lem:no:active:column} and
$|\Lambda_0|=O(1/q^{2\alpha})$.

Furthermore, one can repeat the proof of Lemma~\ref{lem:diri} to obtain
\begin{equation}
\label{Dir}
   \mathcal D(\phi)\leq e^{-1/(C_5^3q^{2\alpha})}.
\end{equation}
Thus, recalling~\eqref{eq:muA}, Property \ref{cond:iii} holds.

We are therefore only left with proving the next lemma establishing Property~\ref{cond:iv}, completing the proof of Theorem~\ref{th:main}.
\begin{lem}
\label{persistence}
Let $\omega$ be such that $\omega_0=0$. Then any legal path connecting $\Omega_g$ to $\omega$
intersects $\cB(n)$.
\end{lem}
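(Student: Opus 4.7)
The plan is to argue by contradiction. Suppose there is a legal path $\gamma = (\sigma_0, \dots, \sigma_k)$ connecting some $\sigma_0 \in \Omega_g$ to $\sigma_k = \omega$ with $\omega_0 = 0$ and $\gamma \cap \cB(n) = \varnothing$; I will derive a contradiction by showing that $\sigma_0$ itself must lie in $\Omega_{\uparrow}^{2N}$, contradicting $\sigma_0 \in \Omega_g \subset \Omega_{\downarrow}$. The first key input is the invariance of the bootstrap percolation closure along a legal path: setting $B_i := \{x \in \bbZ^2 : (\sigma_i)_x = 0\}$, each legal flip at $x$ satisfies $U + x \subset B_{i-1} \cap B_i$ for some $U \in \cU$ (since $x \notin U + x$), hence $x \in [B_{i-1}] \cap [B_i]$, so $[B_i] = [B_{i-1}]$. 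Iterating, $[B_i] = [B_0]$ throughout $\gamma$. From $(\sigma_k)_0 = 0$ I get $0 \in B_k \subset [B_0]$, while $\sigma_0 \in \Omega_g$ gives $B_0 \cap \Lambda_0 = \varnothing$, so the bootstrap percolation infection of $B_0$ must reach the origin from outside $\Lambda_0$ across a distance of at least $1/(4q^\alpha)$.

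Next I would transport this from the global closure $[B_0]$ to the in-$V$ closure $[\sigma_0 \cap V]_{\partial_{2N}}$ used in the definition of the arrows. Using that $\bar\partial$ is a union of stable half-planes in the strongly stable directions $u'_1, u'_2$ (Lemma~\ref{lem:directions}) and that in the application to $\cA$ the endpoint has $\bbZ^2 \setminus V$ fully empty (so $[B_0] \supset \bbZ^2 \setminus V$ is forced by closure invariance), one obtains $0 \in [\sigma_0 \cap V]_{\partial_{2N}}$. Since each step of the bootstrap history tree leading to $0$ has length at most a rule diameter, hence at most $C_2'$, this tree embeds as a $\Gamma'$-connected path in $[\sigma_0 \cap V]_{\partial_{2N}}$ joining $0$ to $\partial_{2N}$, of Euclidean length $\Omega(1/q^\alpha) \gg L$. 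Therefore the $\Gamma'$-connected component of $0$ has diameter $\gg L$ and, by Corollary~\ref{cor:crumbs:growth}, cannot be a modified crumb. It thus contains modified clusters whose $Q'$'s pairwise overlap along the chain (using $C_4' \gg C_2' + C_3$), and the modified droplet algorithm merges them into a single modified spanned droplet $D^*$ of size at least $L$ containing $0$.

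Finally, since $\cC_{2N}$ is preserved in passing from $\omega^{(k-1)}$ to $\omega^{(k-1)} \cap \bbH_{i_k}$ as long as $i_k < 2N$, the droplet $D^*$ survives through the iteration defining $I(\sigma_0)$ and is eventually registered at some $i_k = 2N$, yielding $2N \in I(\sigma_0)$, i.e.\ $\sigma_0 \in \Omega_{\uparrow}^{2N}$; together with Corollary~\ref{keycor} this gives the desired contradiction. The main obstacle, and where the technical care has to be concentrated, is the reduction from the bare invariance $0 \in [B_0]$ to the in-$V$ statement $0 \in [\sigma_0 \cap V]_{\partial_{2N}}$: $\sigma_0$ has unconstrained values outside $V$, so a priori $[B_0]$ could reach $0$ through a route running largely outside $V$; the stability of $\bar\partial$ together with the specific form of the endpoint $\sigma_k$ in the application (namely $\omega \cup (\bbZ^2 \setminus V)$) is precisely what confines the effective growth to $V$ with the right boundary, and makes the droplet argument above run.
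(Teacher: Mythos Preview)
Your closure-invariance observation $[B_i]=[B_0]$ along a legal path is correct, but the argument built on it has a genuine gap that cannot be repaired. Notice that your derivation never uses the hypothesis $\gamma\cap\cB(n)=\varnothing$: closure invariance holds along \emph{any} legal path. So if your argument actually produced $\sigma_0\in\Omega_{\uparrow}^{2N}$ from the mere existence of a legal path to $\{\omega_0=0\}$, you would have shown that no configuration in $\Omega_g$ can ever reach $\{\omega_0=0\}$ by legal moves at all --- which is false for an ergodic KCM. This alone tells you the transport step from $0\in[B_0]$ to $0\in[\sigma_0\cap V]_{\partial_{2N}}$ must fail. Concretely, $B_0$ may contain sites outside $V$ that are essential for infecting $0$; the set $\bbZ^2\setminus V$ (which you try to feed in as boundary) does not coincide with $\partial_{2N}$ --- its component $\{\langle x,u'\rangle\ge e^L\}$ lies on the \emph{opposite} side of $V$ from $\bbH_{2N}$ --- so you cannot exchange one for the other. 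Your appeal to ``the specific form of the endpoint in the application'' both weakens the statement and still does not bridge this mismatch. Finally, even if you had $\sigma_0\in\Omega_{\uparrow}^{2N}$, that would contradict $\sigma_0\in\Omega_{\downarrow}$ directly; invoking Corollary~\ref{keycor} there is superfluous, another sign that the $\cB(n)$ hypothesis has not been engaged.

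The paper's proof takes a genuinely different route that \emph{does} use the $\cB(n)$ hypothesis. It builds a second legal path $\tilde\gamma$ by performing the same flips as $\gamma$ except that in $\cC_{2N}$ only emptyings are kept (fillings are suppressed). This forces every site of $\cC_{2N}$ ever emptied along $\gamma$ to remain empty in $\tilde\omega_{(k)}$, so a nearest-neighbour chain of empty sites runs from $\bbZ^2\setminus\Lambda_0$ to $0$ inside $\cC_{2N}$, giving a modified spanned droplet of size $>L$ and hence $\tilde\omega_{(k)}\in\Omega_{\uparrow}^{2N}$. Since $\tilde\omega_{(j)}$ and $\omega_{(j)}$ agree outside $\cC_{2N}$, one has $\Phi(\tilde\omega_{(j)})_{[1,2N-1]}=\Phi(\omega_{(j)})_{[1,2N-1]}$, whence $\tilde\gamma\cap\cB(n+1)=\varnothing$. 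Now Corollary~\ref{keycor} applies to $\tilde\gamma$ (starting in $\Omega_{\downarrow}$, ending in $\Omega_{\uparrow}^{2N}$, avoiding $\cB(n+1)$) and gives the contradiction. The essential idea you are missing is this ``freezing'' of $\cC_{2N}$, which localises the large structure to the last column at the \emph{end} of the path rather than trying to read it off $\sigma_0$ itself.
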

As in the lower bound on $1-\mu(\tilde{\cA})$ for $\trel$, the proof relies on Corollary~\ref{keycor}, but an additional complication arises due to the fact that emptying the origin does not a priori require creating a critical droplet nearby.

\begin{proof}[Proof of Lemma \ref{persistence}]
Suppose for a contradiction that there exists a configuration $\omega$ with $\omega_0=0$, a configuration $\omega_{(0)}\in\Omega_g$ and a legal path $\gamma=(\omega_{(0)},\dots,\omega_{(k)})$ with $\omega_{(k)}=\omega$ and $\omega_{(j)}\not\in \mathcal B(n)$ for all $j\in \{0,\dots,k\}$. Assuming without loss of generality that $\omega_{(j)}\neq\omega_{(j-1)}$ for all $j$, let $x_j$ be such that $\omega_{(j)}=(\omega_{(j-1)})^{x_j}$. 
Consider the path $\tilde\gamma=(\tilde\omega_{(0)},\dots,\tilde\omega_{(k)})$ obtained by performing the same updates as for $\gamma$ except for flips in the column $\mathcal C_{2N}$, which are performed only if they correspond to emptying sites. More precisely, we let $\tilde\omega_{(0)}=\omega_{(0)}$ and
\[\tilde\omega_{(j)}=
\begin{cases}
(\tilde \omega_{(j-1)})^{x_j}&\text{ if }x_j\not\in\cC_{2N}\text{ or }(\tilde\omega_{(j-1)})_{x_j}=1,\\
\tilde \omega_{(j-1)}&\text{ otherwise.}
\end{cases}
\]
It is not difficult to verify by induction that $\tilde \gamma$
is also a legal path with $\tilde\omega_{(j)}\le\omega_{(j)}$ for all $j$ (where $\omega \leq \omega'$ when $\omega_x \leq \omega'_x$ for all $x \in \bbZ^2$) and that $\tilde\omega_{(j)}$ and $\omega_{(j)}$ coincide outside of $\mathcal{C}_{2N}$. Then $(\tilde\omega_{(k)})_0\le(\omega_{(k)})_0=0$ and by definition $(\tilde\omega_{(0)})_{\Lambda_0}=1$. Therefore, since inside $\cC_{2N}$ each site that has been emptied in $\gamma$ is also empty in $\tilde\omega_{(k)}$, we conclude that necessarily $\tilde\omega_{(k)}\cap\cC_{2N}$\ contains a (modified) spanned droplet of size $1/(4C_1q^{\alpha})>L$ with boundary $\partial_{2N}=\bar\partial$. Indeed, there is a path of sites $x$ with steps of size $O(1)$ from $\bbZ^2\setminus\Lambda_0$ to $0$ such that $(\tilde\omega_{(k)})_x=0$. This means that $\tilde\omega_{(k)} \in\Omega_{\uparrow}^{2N}$. Furthermore, for all $j$ we have $\Phi(\tilde\omega_{(j)})_{[1,2N-1]}=\Phi(\omega_{(j)})_{[1,2N-1]}$, as those do not depend on the sites in $\cC_{2N}$. Thus, using Corollary~\ref{keycor}, together with the facts that 
$\tilde \omega_{(0)}\in\Omega_g\subset\Omega_{\downarrow}$, $\tilde\omega_{(k)} \in\Omega_{\uparrow}^{2N}$ and $\tilde\gamma\cap \cB(n+1)=\varnothing$, we reach a contradiction.
\end{proof}

\section{Open problems}
\label{sec:open}
With Theorem~\ref{th:main} the scaling of the infection time is determined up to a polylogarithmic factor. The next natural question is to pursue determining this factor in the spirit of the refined universality result of~\cite{Bollobas14}. For the moment there is only one critical model with infinitely many stable directions for which this is known --- the Duarte model~\cite{Mareche20Duarte}. In that case the corrective factor is $\Theta((\log q)^4)$. However, for bootstrap percolation there are already two different possible behaviours of this factor depending on whether the model is balanced or unbalanced (see Definition~\ref{def:balanced}). Based on this one could expect the following.
\begin{conj}
\label{conj:logs}
Let $\cU$ be a critical update family with an infinite number of stable directions.
\begin{itemize}
\item If $\cU$ is balanced, then 
\[\bbE(\tau_0)=\exp\left(\frac{\Theta(1)}{q^{2\alpha}}\right).\]
\item If $\cU$ is unbalanced, then 
\[\bbE(\tau_0)=\exp\left(\frac{\Theta\left(\left(\log q\right)^4\right)}{q^{2\alpha}}\right).\]
\end{itemize}
The same asymptotics hold for $\trel$.
\end{conj}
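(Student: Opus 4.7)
The plan is to sharpen both the lower bound of Theorem~\ref{th:main} and the upper bound of \cite[Theorem~2(a)]{Martinelli18}, separately in the balanced and unbalanced cases. Currently, Corollary~\ref{cor:univ} only gives $q^{2\alpha}\log \bbE(\tau_0) = (-\log q)^{O(1)}$, so the polylogarithmic factor remains uncontrolled. The target follows the East heuristic $\log T^{\mathrm{East}} \sim (\log q_{\mathrm{eff}})^2/(2\log 2)$ applied with an effective density dictated by the critical droplet size: $q_{\mathrm{eff}} = \exp(-\Theta(1)/q^\alpha)$ in the balanced case (yielding no log correction), and $q_{\mathrm{eff}} = \exp(-\Theta((\log q)^2)/q^\alpha)$ in the unbalanced case (yielding a $(\log q)^4$ factor after squaring).

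For the lower bound, I would adapt the droplet algorithm and the renormalisation of Section~\ref{sec:East} to operate at the intrinsic critical scale $\ell = \ell(\cU,q)$ rather than the loose choice $L = 1/(C_5 q^\alpha)$. The crucial step is replacing Lemma~\ref{lem:exp:decay:diam} by a sharp estimate of the form $\mu(D\text{ is spanned}) \le \exp(-(1-o(1)) |D|/\ell)$ for droplets near the critical size, analogous to the sharp bootstrap estimates of~\cite{Bollobas14} (and of~\cite{Bollobas17} for Duarte). Then Corollary~\ref{cor:Bn} applied at $n = \log_2(1/q_{\mathrm{eff}})$ gives probability $q_{\mathrm{eff}}^n = \exp(-(\log q_{\mathrm{eff}})^2/\log 2)$, which once propagated through Lemmas~\ref{lem:muAtilde} and~\ref{lem:diri} yields the desired lower exponent in each case. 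The renormalisation scheme and the East combinatorial input (Proposition~\ref{lem:comb_East}) themselves do not change.

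For the upper bound, the goal is to refine the protocol of~\cite{Martinelli18} so that the probability cost of each droplet nucleation is exactly $q_{\mathrm{eff}}$ and the bottleneck matches the East combinatorial one at density $q_{\mathrm{eff}}$. This amounts to constructing an explicit legal path that moves a critical droplet across the lattice in East-like fashion, with $\log_2$ of the inter-droplet distance many droplets coexisting at the bottleneck. In the balanced case a single scale suffices; in the unbalanced case one must exploit the finer geometry of droplets of size $(\log q)^2/q^\alpha$, whose rugged sides possess a specific $\log q$-long micro-structure that must be traversed efficiently without wasting any $\log q$ factor.

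The main obstacle will be pinning down \emph{matching constants} in the balanced case. The cascade of loose parameters $C_1 \ll \cdots \ll C_5$ in Section~\ref{sec:droplets} is far from optimal: extracting the sharp constant in front of $1/q^{2\alpha}$ demands an essentially canonical notion of critical droplet and a tight computation of its spanning probability, in the spirit of the Duarte bootstrap result of~\cite{Bollobas17}. A secondary but substantial obstacle in the unbalanced case is ruling out dynamical shortcuts that exploit rare but `cheap' droplet configurations -- a difficulty already encountered for the Duarte KCM in~\cite{Mareche18}, and which is likely to require a significantly more delicate two-scale droplet algorithm adapted to the $(\log q)^2/q^\alpha$ scale.
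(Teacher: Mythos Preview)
The statement you are attempting to prove is Conjecture~\ref{conj:logs}, which the paper presents as an \emph{open problem} in Section~\ref{sec:open}; there is no proof in the paper to compare your proposal against. The paper only offers the heuristic that the lower bound of Theorem~\ref{th:main} should be sharp for balanced models and the upper bound of \cite[Theorem~2(a)]{Martinelli18} sharp for unbalanced ones, together with the remark that the balanced case ``is not hard and only requires an improvement of the approach of~\cite{Martinelli18}'' and ``will be treated in a future work''. Your outline is broadly consistent with that heuristic, but it is a research plan, not a proof: you explicitly flag the sharp spanning estimate, the refined upper-bound protocol, and the two-scale droplet algorithm as obstacles rather than supplying arguments for them.

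Two specific remarks. First, your stated ``main obstacle'' of pinning down \emph{matching constants} in the balanced case overshoots the target: the conjecture asks only for $\Theta(1)$, and the lower bound is already provided by Theorem~\ref{th:main}, so what is missing is solely an upper bound of the form $\exp(C/q^{2\alpha})$ with no polylogarithmic loss---the paper asserts this is an upper-bound improvement on~\cite{Martinelli18}, not a sharpening of the lower bound or of the constants $C_1,\dots,C_5$ here. Second, in the unbalanced case the missing piece is a \emph{lower} bound with the $(\log q)^4$ factor; your sketch correctly points to a sharper version of Lemma~\ref{lem:exp:decay:diam} at the scale $(\log q)^2/q^\alpha$, but turning this into an actual argument requires controlling the entropy of droplets at that scale (Observation~\ref{obs:entropy} loses an exponential in the diameter) and a genuinely new ingredient beyond the present paper, as was the case already for Duarte in~\cite{Mareche18}.
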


In other words we expect the lower bound of Theorem~\ref{th:main} to be sharp for balanced models, while the upper bound of~\cite[Theorem~2(a)]{Martinelli19b} to be sharp for unbalanced ones. 
The balanced case is not hard and only requires an improvement of the approach of~\cite{Martinelli19b}. It will be treated in a future work, since it shares none of the techniques discussed here. In the unbalanced case the $(\log q)^4$ should arise as the square of the $(\log q)^2$ factor for bootstrap percolation, itself caused by the one-dimensional geometry and larger size of critical droplets. This is indeed what happens for the Duarte model \cite{Mareche20Duarte}, an example of unbalanced critical constraint. 

\section*{Acknowledgements}
This work has been supported by ERC Starting Grant 680275 MALIG and by ANR-15-CE40-0020-01. We wish to thank Fabio Martinelli for numerous and enlightening discussions and the Departement of Mathematics and Physics of University Roma Tre for its kind hospitality.
\bibliographystyle{plain}
\bibliography{Bib}
\end{document}